\newcommand{\mat}[1]{\mbox{\boldmath{$#1$}}}
\def\betamod{\chi}
\def\Cvec{\mathbf{C}}
\def\dis{\displaystyle}
\def\Fin{\hfill$\Box$}
\def\Hvec{\mathbf{H}}
\def\Lvec{\mathbf{L}}
\def\Lavec{\mathbf{\Lambda}}
\def\lavec{\mathbf{\lambda}}
\def\bvec{\mathbf{b}}
\def\mvec{\mathbf{m}}
\def\nvec{\mathbf{n}}
\def\Om{\Omega}
\def\om{\omega}
\def\Phivec{\tilde{\mathbf{\Phi}}}
\def\Phivecc{{\mathbf{\Phi}}}
\def\Psivecc{\mathbf{\Psi}}
\def\pvec{\mathbf{p}}
\def\xvec{\mathbf{x}}
\def\avec{\mathbf{a}}
\def\fvec{\mathbf{f}}
\def\0vec{\mathbf{0}}
\def\varvec{\bm{\varphi}}
\def\Vvec{\mathbf{V}}
\def\vvec{\mathbf{v}}
\def\Xvec{\mathbf{X}}
\def\Mvec{\mathbf{M}}
\def\Avec{\mathbf{A}}
\def\Wvec{\mathbf{W}}
\def\Yvec{\mathbf{Y}}
\def\xvec{\mathbf{x}}
\def\yvec{\mathbf{y}}
\def\Zvec{\mathbf{Z}}
\def\zvec{\mathbf{z}}
\def\eps{\varepsilon}
\def\uvec{\mathbf{u}}
\def\wvec{\mathbf{w}}
\newtheorem{theorem}{Theorem}
\newtheorem{proposition}{Proposition}
\newtheorem{remark}{Remark}
\def\bibname{Referencias}
\renewenvironment{thebibliography}[1]
{\footnotesize \section*{\normalsize \bibname}
\list{[\arabic{enumi}]}{\settowidth\labelwidth{[#1]}
\leftmargin\labelwidth
\advance\leftmargin\labelsep
\usecounter{enumi}}
}
\begin{document}

\title{\textbf{On the numerical controllability of the two-dimensional heat, Stokes and Navier-Stokes equations}}

\author{
	\textsc{Enrique FERN\'ANDEZ-CARA}\thanks{Dpto.\ EDAN, Universidad de Sevilla, Aptdo.~1160, 41080~Sevilla, 
	Spain. E-mail: {\tt cara@us.es}. Partially supported by grant MTM2013--41286--P (Spain).}, \
	\textsc{Arnaud M\"UNCH}\thanks{Laboratoire de Math\'{e}matiques, Universit\'{e} Blaise Pascal (Clermont-Ferrand~2), 
	UMR CNRS 6620, Campus des C\'ezeaux,  63177~Aubi\`ere, France. E-mail: {\tt arnaud.munch@math.univ-bpclermont.fr}.}\\
	and 
	\textsc{Diego A. SOUZA}\thanks{ Dpto.\ EDAN, Universidad de Sevilla, Aptdo.~1160, 41080~Sevilla, Spain. E-mail: 
	{\tt desouza@us.es}. Partially supported by grant MTM2013--41286--P (Spain).}
}

\date{}

\maketitle
\begin{abstract}
	The aim of this work is to present some strategies to solve numerically controllability problems for the two-dimensional
	heat equation, the Stokes equations and the Navier-Stokes equations with Dirichlet boundary conditions. The main idea is 
	to adapt the Fursikov-Imanuvilov formulation, see~[A.V. Fursikov, O.Yu. Imanuvilov: {\it Controllability of Evolutions 
	Equations,} Lectures Notes Series, Vol.~34, Seoul National University, 1996]; this approach has been followed recently 
	for the one-dimensional heat equation by the first two authors. More precisely, we minimize over the class of admissible 
	null controls a functional that involves weighted integrals of the state and the control, with weights that blow up near 
	the final time. The associated optimality conditions can be viewed as a differential system in the three variables 
	$x_1$, $x_2$ and $t$ that is second--order in time and fourth--order in space, completed with appropriate boundary
	conditions. We present several mixed formulations of the problems and, then, associated mixed finite element Lagrangian
	approximations that are relatively easy to handle. Finally, we exhibit some numerical experiments.
\end{abstract}

\tableofcontents


\section{Introduction. The controllability problems}

	Let $\Om\subset \mathbb{R}^2$ be a bounded domain whose boundary $\Gamma:=\partial\Om$ is regular enough. 
	Let $\om\subset \Om$ be a (possibly small) nonempty open subset and assume that 
	$T>0$. We will use the notation $Q_\tau=\Om\times(0,\tau)$, $\Sigma_\tau=\Gamma\times(0,\tau)$, 
	$q_\tau=\om\times(0,\tau)$ and $\nvec=\nvec(\xvec)$ will denote the outward unit normal to $\Om$ at any point 
	$\xvec\in\Gamma$.

	Throughout this paper, $C$ will denote  a generic positive constant (usually depending on 
	$\Om$, $\om$ and $T$) and the bold letters and symbols will stand for vector-valued functions and spaces; for instance 
	$\Lvec^2(\Om)$ is the Hilbert space of the functions $\uvec = (u_1, u_2)$ with $u_1,u_2 \in L^2(\Om)$.

	This paper is concerned with the global null controllability of the heat equation
\begin{equation}\label{heat}
	\left\{
		\begin{array}{lcl}
     			y_t  - \Delta y+ G(\xvec,t)\,y = v1_\om	&\hbox{in}		&	Q_T,		\\
     			\noalign{\smallskip}\dis
			y = 0                                                   &\hbox{on}	&	\Sigma_T, \\
    			\noalign{\smallskip}\dis
			y(\cdot,0) = y_0                			&\hbox{in}		&	\Omega
		\end{array}
	\right.
\end{equation}
	and the Stokes equations
\begin{equation}\label{stokes}
	\left\{
		\begin{array}{lcl}
     			\yvec_t  - \nu\Delta \yvec + \nabla \pi = \vvec1_\om	&\hbox{in}		&	Q_T,		\\
    			\noalign{\smallskip}\dis
			\nabla \cdot \yvec = 0						&\hbox{in}		&	Q_T,		\\
     			\noalign{\smallskip}\dis
			\yvec = \0vec 						          	&\hbox{on}	&	\Sigma_T,	\\
     			\noalign{\smallskip}\dis
			\yvec(\cdot,0) = \yvec_0			 			&\hbox{in}		&	\Omega
		\end{array}
	\right.
\end{equation}
	and the local exact controllability to the trajectories of the Navier-Stokes equations
\begin{equation}\label{N-stokes}
	\left\{
		\begin{array}{lcl}
     			\yvec_t  - \nu\Delta \yvec + (\yvec\cdot\nabla)\yvec + \nabla \pi = \vvec1_\om 	&\hbox{in}		&	Q_T,		\\
    			\noalign{\smallskip}\dis
			\nabla \cdot \yvec = 0      					  				         	&\hbox{in}		&	Q_T,   	\\
     			\noalign{\smallskip}\dis
			\yvec = \0vec 						          			                 	&\hbox{on}	&	\Sigma_T,	\\
     			\noalign{\smallskip}\dis
			\yvec(\cdot,0) = \yvec_0			                                        				&\hbox{in}		&	\Omega.
		\end{array}
	\right.
\end{equation}
	Here, $v = v(\xvec,t)$ and $\vvec = \vvec(\xvec, t)$ stand for the controls (they are assumed to act on $\om$ 
	during the time interval $(0,T)$; the symbol $1_\om$ stands for the characteristic function of $\om$). Moreover, in 
	\eqref{heat}, we assume that  $G \in L^\infty(Q_T)$; in \eqref{stokes} and \eqref{N-stokes}, $\nu>0$.

	Let us first consider the system \eqref{heat}. It is well known that, for any $y_0\in L^2(\Om)$,
	$T > 0$ and $v \in L^2(q_T)$, there exists exactly one solution~$y$ to \eqref{heat}, with
$$
	y \in C^0([0, T];L^2(\Om))\cap L^2(0, T;H_0^1(\Om)).
$$
	The {\it null controllability} problem for \eqref{heat} at time $T$ is the following:
{\it
\begin{quote}
	 For any $y_0 \in L^2(\Om)$ find a control $v \in L^2(q_T)$ such that the associated solution to~\eqref{heat} satisfies
\end{quote}
\begin{equation}\label{null_condition}
	y(\xvec,T) = 0\quad\hbox{in}\quad\Omega.
\end{equation}
}

	The following result is also well known; for a proof, see \cite{FursikovImanuvilov}:
\begin{theorem}\label{th-1}
	The heat equation \eqref{heat} is null-controllable at any time $T>0$.
\end{theorem}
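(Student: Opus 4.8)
The plan is to establish null controllability of \eqref{heat} via the classical duality-plus-Carleman strategy for parabolic equations. The first step is to reduce, by the Hilbert Uniqueness Method (duality), the null controllability property at time $T$ to an \emph{observability inequality} for the adjoint (backward) heat system
\begin{equation*}
	\left\{
		\begin{array}{lcl}
			-\ph_t - \Delta\ph + G(\xvec,t)\,\ph = 0	&\hbox{in}	&	Q_T,	\\
			\noalign{\smallskip}\dis
			\ph = 0						&\hbox{on}	&	\Sigma_T,	\\
			\noalign{\smallskip}\dis
			\ph(\cdot,T) = \ph_T				&\hbox{in}	&	\Omega,
		\end{array}
	\right.
\end{equation*}
namely the existence of a constant $C>0$ such that every solution satisfies $\|\ph(\cdot,0)\|_{L^2(\Om)}^2 \le C\iint_{q_T} |\ph|^2\,d\xvec\,dt$. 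Standard functional-analytic arguments (minimization of a coercive quadratic functional $J(\ph_T)=\frac12\iint_{q_T}|\ph|^2 + (y_0,\ph(\cdot,0))$ over $L^2(\Om)$, whose minimizer yields the control $v = \ph|_{q_T}$) show that this observability inequality is in fact equivalent to null controllability, so everything reduces to proving it.

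The second and central step is the proof of the observability inequality, which rests on a \emph{global Carleman estimate} for the heat operator. One introduces a weight function built from $\eta^0 \in C^2(\overline\Om)$ with $\eta^0>0$ in $\Om$, $\eta^0=0$ on $\Gamma$, $|\nabla\eta^0|>0$ outside $\om_0\Subset\om$, and then sets, for parameters $\lambda,s$ large,
\begin{equation*}
	\alpha(\xvec,t) = \frac{e^{2\lambda\|\eta^0\|_\infty} - e^{\lambda\eta^0(\xvec)}}{t(T-t)}, \qquad
	\xi(\xvec,t) = \frac{e^{\lambda\eta^0(\xvec)}}{t(T-t)}.
\end{equation*}
The Carleman estimate asserts that there exist $C,\lambda_0,s_0$ such that for all $\lambda\ge\lambda_0$, $s\ge s_0$ and all solutions $\ph$ of the adjoint system,
\begin{equation*}
	\iint_{Q_T} \Big( (s\xi)^{-1}(|\ph_t|^2+|\Delta\ph|^2) + \lambda^2 s\xi|\nabla\ph|^2 + \lambda^4 s^3\xi^3|\ph|^2 \Big) e^{-2s\alpha}\,d\xvec\,dt
	\le C \iint_{q_T} \lambda^4 s^3 \xi^3 |\ph|^2 e^{-2s\alpha}\,d\xvec\,dt,
\end{equation*}
the term $\|G\|_{L^\infty}$ being absorbed into the left-hand side by taking $s$ large. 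Proving this estimate is the main obstacle: it requires the usual splitting $\psi = e^{-s\alpha}\ph$, a careful expansion of $\|e^{-s\alpha}(\partial_t+\Delta)(e^{s\alpha}\psi)\|^2$ into self-adjoint and skew-adjoint parts, integration by parts in $Q_T$ with control of all boundary terms on $\Sigma_T$ (using $\psi=0$ there and the sign of $\partial_n\eta^0$), and finally the local energy estimate that replaces the interior gradient term by a term localized in $\om$ via a cutoff function. I would follow the exposition in \cite{FursikovImanuvilov} essentially verbatim at this point.

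The third step is to deduce the observability inequality from the Carleman estimate. Dropping all but the $\lambda^4 s^3\xi^3|\ph|^2 e^{-2s\alpha}$ term on the left and restricting the time integration to $(T/4,3T/4)$, where the weights $e^{-2s\alpha}$ and $\xi^3$ are bounded above and below by positive constants, one gets $\iint_{\Om\times(T/4,3T/4)}|\ph|^2\,d\xvec\,dt \le C\iint_{q_T}|\ph|^2\,d\xvec\,dt$. Combining this with the dissipation (energy) estimate $\|\ph(\cdot,0)\|_{L^2(\Om)}^2 \le e^{C(1+\|G\|_\infty)T}\|\ph(\cdot,t)\|_{L^2(\Om)}^2$ for $t\in(T/4,3T/4)$ — which follows from a standard Gronwall argument applied to $t\mapsto\|\ph(\cdot,t)\|_{L^2(\Om)}^2$ for the backward equation — and integrating in $t$ over $(T/4,3T/4)$ yields the desired observability inequality, hence Theorem~\ref{th-1}. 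The only subtlety worth flagging is uniformity of the constants in $T$ and in $\|G\|_{L^\infty(Q_T)}$; since the statement only claims controllability for each fixed $T>0$ and $G\in L^\infty(Q_T)$, no uniformity is needed and the argument closes. \Fin
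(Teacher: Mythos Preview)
Your proposal is correct and follows the classical HUM/observability route: reduce null controllability to an observability inequality for the adjoint system, prove a global Carleman estimate with weights blowing up like $1/(t(T-t))$, and then derive observability by restricting to a compact time sub-interval and invoking backward energy dissipation. All three steps are standard and sound; the only point you gloss over is that the functional $J(\ph_T)$ is not coercive on $L^2(\Om)$ itself but on the completion with respect to $\|\ph\|_{L^2(q_T)}$, which is precisely what the observability inequality provides --- but you clearly know this, since you say observability and null controllability are equivalent.

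The paper, however, does not give its own proof of Theorem~\ref{th-1}: it simply refers to~\cite{FursikovImanuvilov}. What the paper \emph{does} develop in Section~\ref{sec_heat} is the Fursikov--Imanuvilov variational formulation (Theorem~\ref{th-3_h}, Proposition~\ref{prop-Carleman_h}, Theorem~\ref{th-optimality_h}), which amounts to a different proof of the same result. There one minimizes the weighted functional $J(y,v)=\tfrac12\iint\rho^2|y|^2+\tfrac12\iint\rho_0^2|v|^2$ directly over admissible state--control pairs, and the Carleman inequality~\eqref{Carleman-ineq_h} --- with weights $\rho,\rho_0$ that blow up only at $t=T$ --- is used to show that the bilinear form $k(\cdot,\cdot)$ in~\eqref{defk} is a genuine scalar product on~$P_0$ and that the linear form $\ell_0$ is continuous on its completion~$P$, so that Lax--Milgram yields~\eqref{pb-p_h} and hence the control via~\eqref{optimal-yv-good_h}. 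Your route passes through observability and an abstract dual space; the paper's route stays on the primal side and produces the control constructively as $v=-\rho_0^{-2}p|_{q_T}$, with the bonus that the state $y=\rho^{-2}L^*p$ automatically decays like $\rho^{-1}$ near $t=T$. Both rest on essentially the same Carleman machinery, but the Fursikov--Imanuvilov formulation is what the paper actually exploits numerically, since it leads directly to the well-posed variational problem~\eqref{pb-p-s_h} that is then discretized.
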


	Let us now consider the systems \eqref{stokes} and \eqref{N-stokes}. Let us recall the definitions of some usual spaces 
	in the context of incompressible fluids:
\[
	\begin{alignedat}{2}
		&\dis\Hvec := \left\{\varvec \in \Lvec^2(\Om) : \nabla \cdot \varvec = 0 \hbox{ in } \Om,
		~\varvec\cdot\nvec=0\hbox{ on }\Gamma\right\},\\
		\noalign{\smallskip}
		&\Vvec :=\dis \left\{\varvec \in \Hvec^1_0(\Om) : \nabla \cdot \varvec = 0\hbox{ in }\Om\right\},\\
		\noalign{\smallskip}
		&U:=\dis \left\{\psi \in H^1(\Om): \int_\Om \psi(\xvec)\,d\xvec=0\right\}.
    	\end{alignedat}
\]

	For any $\yvec_0\in \Hvec,~ T > 0$ and $\vvec \in \Lvec^2(q_T)$, there exists exactly one solution $(\yvec,\pi)$ to the Stokes equations~\eqref{stokes} and
	(since we are in the $2D$ case), also one solution $(\yvec,\pi)$ to the Navier-Stokes equations~\eqref{N-stokes}.
	In both cases
$$
	\yvec \in C^0\left([0, T];\Hvec\right)\cap L^2\left(0, T;\Vvec\right),~\pi \in L^2_{\rm loc}(0,T; U).
$$

	In the context of the Stokes system \eqref{stokes}, the {\it null controllability} problem at time $T$ is the following:
{\it
\begin{quote}
	 For any $\yvec_0 \in \Hvec$ find a control $\vvec \in \Lvec^2(q_T)$ such that the associated solution 
	to~\eqref{stokes} satisfies
\end{quote}
\begin{equation}\label{null_condition_stokes}
	\yvec(\xvec,T) = \0vec\quad\hbox{in}\quad\Omega.
\end{equation}
}

	Again, the following result is well known; for a proof, see \cite{FursikovImanuvilov}:
\begin{theorem}\label{th-2}
	The Stokes system~\eqref{stokes} is null-controllable at any time $T>0$.
\end{theorem}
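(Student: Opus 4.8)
The plan is to rely on the classical duality between null controllability and observability, and then to derive the observability estimate from a global Carleman inequality for the Stokes operator; this is, in essence, the argument of \cite{FursikovImanuvilov}, parallel to the one behind Theorem~\ref{th-1} but with the extra difficulty coming from the pressure.

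First I would set up the duality. The adjoint of \eqref{stokes} is the backward Stokes system: given $\varvec_T\in\Hvec$, let $(\varvec,\sigma)$ solve
\[
\left\{
\begin{array}{lcl}
-\varvec_t - \nu\Delta\varvec + \nabla\sigma = \0vec	&\hbox{in}	&	Q_T,\\
\noalign{\smallskip}\dis
\nabla\cdot\varvec = 0						&\hbox{in}	&	Q_T,\\
\noalign{\smallskip}\dis
\varvec = \0vec						&\hbox{on}	&	\Sigma_T,\\
\noalign{\smallskip}\dis
\varvec(\cdot,T) = \varvec_T					&\hbox{in}	&	\Om.
\end{array}
\right.
\]
A standard duality argument shows that \eqref{stokes} is null-controllable at time $T$ if and only if there exists $C>0$ such that every solution of this system satisfies the observability inequality
\[
\|\varvec(\cdot,0)\|_{\Lvec^2(\Om)}^2 \le C\iint_{q_T}|\varvec|^2\,d\xvec\,dt .
\]
Indeed, assuming this inequality, the functional $\varvec_T\mapsto \frac12\iint_{q_T}|\varvec|^2\,d\xvec\,dt + \int_\Om\yvec_0\cdot\varvec(\cdot,0)\,d\xvec$ is continuous, strictly convex and becomes coercive on the completion of $\Hvec$ for the norm $\varvec_T\mapsto\|\varvec\|_{\Lvec^2(q_T)}$; its minimizer $\hat\varvec_T$ produces, through the associated adjoint state $\hat\varvec$, the null control $\vvec=\hat\varvec\,1_\om$.

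Second, to prove the observability inequality I would invoke a global Carleman estimate. Fix an open set $\om_0$ with $\overline{\om_0}\subset\om$ and a function $\eta_0\in C^2(\overline\Om)$ with $\eta_0>0$ in $\Om$, $\eta_0=0$ on $\Gamma$ and $|\nabla\eta_0|>0$ on $\overline{\Om\setminus\om_0}$, and put, for $\lambda>0$ and $s>0$,
\[
\alpha(\xvec,t)=\frac{e^{2\lambda\|\eta_0\|_\infty}-e^{\lambda(\|\eta_0\|_\infty+\eta_0(\xvec))}}{t(T-t)},\qquad
\xi(\xvec,t)=\frac{e^{\lambda(\|\eta_0\|_\infty+\eta_0(\xvec))}}{t(T-t)} .
\]
The Carleman estimate for the Stokes system (see \cite{FursikovImanuvilov}) states that, for $\lambda$ large enough and $s\ge s_0(\lambda)$,
\[
\iint_{Q_T}e^{-2s\alpha}\Bigl((s\xi)^{-1}\bigl(|\varvec_t|^2+|\Delta\varvec|^2\bigr)+s\lambda^2\xi|\nabla\varvec|^2+s^3\lambda^4\xi^3|\varvec|^2\Bigr)\,d\xvec\,dt \le C\iint_{q_T}e^{-2s\alpha}s^3\lambda^4\xi^3|\varvec|^2\,d\xvec\,dt .
\]
The main obstacle is exactly the term $\nabla\sigma$: contrary to the heat equation, one cannot apply a pointwise Carleman inequality componentwise to $\varvec$. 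The usual remedy, particularly transparent in dimension two, is to apply the curl to the momentum equation; this eliminates $\nabla\sigma$ and leads to a scalar heat equation for $w:=\nabla\times\varvec$, to which a Carleman estimate applies. One then recovers the estimates for $\varvec$ itself from $w$ by solving an elliptic div--curl system, the delicate point being the control of the extra boundary terms that arise; an alternative is to keep the Stokes system as it is and to estimate $\nabla\sigma$ locally, using that $\sigma$ is harmonic in $\xvec$.

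Finally, I would deduce the observability inequality. Restricting the left-hand side of the Carleman estimate to $\Om\times(T/4,3T/4)$, where the weights $e^{-2s\alpha}$ and $s^3\lambda^4\xi^3$ are bounded from above and below by positive constants (for $\lambda$, $s$ now fixed), and noting that $e^{-2s\alpha}s^3\lambda^4\xi^3$ is bounded on $Q_T$, one gets
\[
\iint_{\Om\times(T/4,3T/4)}|\varvec|^2\,d\xvec\,dt \le C\iint_{q_T}|\varvec|^2\,d\xvec\,dt .
\]
On the other hand, testing the adjoint momentum equation with $\varvec$ gives $\frac{d}{dt}\|\varvec(\cdot,t)\|_{\Lvec^2(\Om)}^2=2\nu\|\nabla\varvec(\cdot,t)\|_{\Lvec^2(\Om)}^2\ge 0$, so that $t\mapsto\|\varvec(\cdot,t)\|_{\Lvec^2(\Om)}^2$ is nondecreasing and hence $\frac{T}{2}\,\|\varvec(\cdot,0)\|_{\Lvec^2(\Om)}^2\le\iint_{\Om\times(T/4,3T/4)}|\varvec|^2\,d\xvec\,dt$. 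Combining the two inequalities yields the observability estimate, and therefore the null controllability of \eqref{stokes} at any time $T>0$.
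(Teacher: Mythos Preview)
The paper does not give its own proof of Theorem~\ref{th-2}; it simply records the result as well known and refers to~\cite{FursikovImanuvilov}. Your sketch---duality with the backward Stokes system, a global Carleman estimate with pressure handled via the curl, and the energy monotonicity $\frac{d}{dt}\|\varvec(\cdot,t)\|_{\Lvec^2(\Om)}^2\ge 0$ to pass from the interior time slab to $t=0$---is a correct outline of precisely that classical argument, so there is nothing to compare against and no gap to flag.
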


	Let us recall the concept of {\it exact controllability to the trajectories}. The idea is that,
	even if we cannot reach every element of the state space exactly, we can try to reach (in finite time $T$) any 
	state on any trajectory.
	
	Thus, let $(\overline \yvec,\overline\pi)$ be a solution to the uncontrolled Navier-Stokes equations:
\begin{equation}\label{TRAJEC}
	\left\{
		\begin{array}{lcl}
     			\overline\yvec_t -\nu\Delta\overline\yvec+(\overline\yvec\cdot\nabla)\overline\yvec+\nabla \overline\pi=\0vec
			&\hbox{in}&	Q_T,	           	\\
			\noalign{\smallskip}\dis
    			\nabla \cdot\overline \yvec = 0      					  				         					
			&\hbox{in}&	Q_T, 	  		\\
			\noalign{\smallskip}\dis
     			\overline\yvec = \0vec 						          			                						
			&\hbox{on}&	\Sigma_T,		\\
			\noalign{\smallskip}\dis
     			\overline\yvec(\cdot,0) =\overline\yvec_0			             			                           				
			&\hbox{in}&	\Omega.
		\end{array}
	\right.
\end{equation}	
	We will search for controls $\vvec\in \Lvec^2(q_T)$ such that the associated solutions to \eqref{N-stokes} satisfy
\begin{equation}\label{trajec_condition}
	\yvec(\xvec,T) = \overline \yvec(\xvec,T)\quad\hbox{in}\quad\Omega.
\end{equation}	
	The problem of exact controllability to the trajectories for \eqref{N-stokes} is the following:
{\it
\begin{quote}
	For any $\yvec_0\in \Hvec$ and any trajectory $(\overline\yvec,\overline\pi)$, find a control $\vvec\in \Lvec^2(q_T)$
	such that the associated solution to \eqref{N-stokes} satisfies \eqref{trajec_condition}.
\end{quote}
}

	The following result shows that this problem can be solved at least locally when $\overline\yvec$ is bounded; 
	for a proof, see~\cite{FC-G-P,IMANU}:
	
\begin{theorem}\label{th-3}
	The Navier-Stokes equations \eqref{N-stokes} are locally exact controllable to the trajectories
	$(\overline \yvec,\overline \pi)$ with
\begin{equation}\label{regu_trajec}
	\overline\yvec\in \Lvec^\infty(Q_T),~\overline\yvec(\cdot,0)\in \Vvec.
\end{equation}
	In other words, for any $T>0$ and any solution to $\eqref{TRAJEC}$ satisfying \eqref{regu_trajec},
	there exists $\eps>0$ with the following property: if $\yvec_0\in\Vvec$ and $\|\yvec_0-\overline\yvec(\cdot,0)\|_{\Vvec}\leq\eps$,
	one can find controls $\vvec\in \Lvec^2(q_T)$ such that the associated solutions to \eqref{N-stokes} satisfy 
	\eqref{trajec_condition}.
\end{theorem}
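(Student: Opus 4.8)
The plan is to follow the now-classical strategy of Fursikov and Imanuvilov for the local exact controllability of the Navier--Stokes system, reducing the nonlinear problem to the null controllability of the linearized (Stokes-type) system via a fixed-point or inverse-function-theorem argument. First I would rewrite the problem in terms of the difference $\wvec := \yvec - \overline\yvec$ and $q := \pi - \overline\pi$, so that the goal becomes driving $\wvec$ to zero at time $T$; the equation satisfied by $(\wvec,q)$ is
\begin{equation}\label{plan-diff}
	\left\{
		\begin{array}{lcl}
			\wvec_t - \nu\Delta\wvec + (\wvec\cdot\nabla)\overline\yvec + (\overline\yvec\cdot\nabla)\wvec + (\wvec\cdot\nabla)\wvec + \nabla q = \vvec 1_\om	&\hbox{in}	&	Q_T,	\\
			\noalign{\smallskip}\dis
			\nabla\cdot\wvec = 0	&\hbox{in}	&	Q_T,	\\
			\noalign{\smallskip}\dis
			\wvec = \0vec	&\hbox{on}	&	\Sigma_T,	\\
			\noalign{\smallskip}\dis
			\wvec(\cdot,0) = \yvec_0 - \overline\yvec(\cdot,0)	&\hbox{in}	&	\Omega,
		\end{array}
	\right.
\end{equation}
and $\|\wvec(\cdot,0)\|_{\Vvec}\le\eps$ is small. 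Linearizing around zero, the linear control problem is the Oseen-type system obtained by dropping $(\wvec\cdot\nabla)\wvec$ and replacing it by a source term $\fvec$.

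Next I would establish the key linear ingredient: a \emph{null controllability with source term} result for the linearized system. This rests on a global Carleman inequality for the adjoint Oseen system, with weights $\rho(t)$ that blow up exponentially as $t\to T^-$. The Carleman estimate yields an observability inequality with weights, and by duality one obtains, for every right-hand side $\fvec$ in a suitable weighted $\Lvec^2$ space (one that decays fast near $t=T$), a control $\vvec$ and a state $\wvec$ with $\wvec(\cdot,T)=\0vec$, satisfying a weighted estimate of the form $\|\rho_1\wvec\|^2 + \|\rho_2\vvec 1_\om\|^2 \le C(\|\rho_0\fvec\|^2 + \|\wvec(\cdot,0)\|_{\Vvec}^2)$ for appropriate weights $\rho_i$. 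Crucially, the solution is sought in a weighted Banach space $E$ whose norm controls $\wvec$ strongly enough (in particular in $\Lvec^\infty(Q_T)$ or $\Lvec^4(Q_T)$, using parabolic regularity and the assumed bound $\overline\yvec\in\Lvec^\infty(Q_T)$) that the nonlinear term can be treated perturbatively. The hypothesis $\overline\yvec(\cdot,0)\in\Vvec$ enters here to give enough regularity on the initial layer.

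Then I would close the argument by a fixed-point scheme: define a map $\Lambda$ that sends $\wvec\in E$ to the controlled solution of the linear problem with source $\fvec = -(\wvec\cdot\nabla)\wvec$, using the linear result above. One checks that $\Lambda$ maps a small ball of $E$ into itself and is continuous (and compact, via Aubin--Lions), provided $\eps$ is small enough — the smallness of $\eps$ is exactly what makes the quadratic term $(\wvec\cdot\nabla)\wvec$ small compared to the radius of the ball. Schauder's fixed-point theorem then gives $\wvec=\Lambda(\wvec)$, i.e.\ a solution of \eqref{plan-diff} with $\wvec(\cdot,T)=\0vec$, which translates back to \eqref{trajec_condition}. (Alternatively one can invoke an inverse mapping / Lyusternik-type theorem on the map $(\wvec,\vvec)\mapsto(\wvec_t-\nu\Delta\wvec+\ldots-\vvec 1_\om,\ \wvec(\cdot,0))$, whose surjectivity at the origin is precisely the linear controllability statement.)

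The main obstacle is the linear step: proving the Carleman estimate for the adjoint Oseen system \emph{with the correct weights} so that the resulting weighted control/state estimate is compatible with the nonlinear estimate on $(\wvec\cdot\nabla)\wvec$. One must carefully choose the weight functions so that $\rho_0 \gtrsim \rho_1^2$ (or an analogous relation reflecting the quadratic nonlinearity) while still keeping the adjoint estimate valid; handling the pressure term in the Stokes operator (either by working in divergence-free spaces or by estimating $\nabla q$ via elliptic regularity) is the delicate technical point. Once the weighted linear result is in place with a solution space $E$ continuously embedded in a space where $(\wvec\cdot\nabla)\wvec$ is controlled, the fixed-point closing is routine. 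Since all of this is carried out in detail in \cite{FC-G-P,IMANU}, I would cite those works for the full proof. \Fin
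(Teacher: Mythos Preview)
Your proposal is correct and is precisely the strategy carried out in the references~\cite{FC-G-P,IMANU}. The paper itself does not give an independent proof of Theorem~\ref{th-3}; it simply states the result and refers to those works, so your sketch already contains more detail than the paper provides (and it is consistent with the fixed-point viewpoint the paper later adopts in Section~\ref{Sec_Navier_Stokes} for the numerical algorithm, see~\eqref{N-S-REFOR}, Proposition~\ref{prop-Carleman-Oseen} and the mapping~$F$).
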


	The aim of this paper is to present efficient strategies for the numerical solution of the previous controllability problems.
	These problems concern the computation of external heat sources or force fields that can be applied in a small part of the working domain and control the whole system at a prescribed positive time.
	There are lots of particular situations where this is highly desired;
	in particular, for theoretical and numerical information on control problems for fluid flows, see~\cite{GlowinskiBook, Gunz-book}.
	
	However, the numerical resolution of control problems as those above is not easy.
	This is due to several reasons:
	
\begin{itemize}

\item In the case of the ``linear'' problems \eqref{heat}--\eqref{null_condition} and~\eqref{stokes}--\eqref{null_condition_stokes}, due to the regularizing effect of the PDEs, the equalities \eqref{null_condition} and~\eqref{null_condition_stokes} can be satisfied only in a very small space and standard numerical approximations of the PDEs are unable to capture this.
	For instance, if we look for a minimal $L^2$ norm null control for \eqref{heat}, we are led by duality to an unconstrained extremal problem in a huge space that cannot be approximated efficiently with usual finite dimensional spaces;
	see however~\cite{boyer, boyercanum12} for detailed comments on this issue.

\item On the other hand, in \eqref{N-stokes}--\eqref{trajec_condition} we find the Navier-Stokes system and, obviously, this adds major difficulties.
	Note that, at present, it is unknown whether or not the exact controllability to the trajectories of \eqref{N-stokes} holds without smallness assumptions even when $\overline\yvec \equiv \0vec$.

\end{itemize}

\begin{remark}\label{rem-boundary}
	{\rm
	In this paper, we will only deal with distributed controls.
	In fact, in the case of the Stokes and Navier-Stokes equations, it would be more appropriate from the viewpoint of applications to consider boundary controls acting on a part of $\Sigma$;
	this will be the subject of a forthcoming paper.
	Note however that, in general terms, a boundary control problem can be re-formulated in the form~\eqref{stokes} or~\eqref{N-stokes} by modifying slightly the domain, choosing $\omega$ appropriately
	(outside the original $\Om$) and then considering the restriction of the controlled state to the original $\Om$.
	\Fin
	}
\end{remark}

\

	The paper is organized as follows.
	
	In Section \ref{sec_heat}, we deal with the numerical null controllability of the heat equation. 
	Following ideas from \cite{FursikovImanuvilov}, we reduce the task to the solution of a boundary-value problem that is fourth-order in space 
	and second-order in time. We present a mixed approximate formulation where we avoid the use of $C^1$ finite elements.
	
	 In Sections \ref{sec_stokes} and \ref{Sec_Navier_Stokes}, we present similar numerical strategies to solve  numerically  the controllability problems considered above for the Stokes and the Navier-Stokes equations.
	 The methods are illustrated with several numerical experiments.
	 
	 Finally, Section~\ref{Sec-final} contains several additional comments.
 
 
\section{A strategy for the computation of null controls for the heat equation}\label{sec_heat}

	In this Section, we will start from a formulation of the null controllability problem for \eqref{heat} introduced 
	and extensively used by Fursikov and Imanuvilov, see~\cite{FursikovImanuvilov}.
	We will present some numerical methods essentially obtained by finite dimensional reduction.
	
	Note that this is not the unique efficient approach.
	The first contribution to the numerical solution of null controllability problems of this kind was due to Carthel, Glowinski and Lions in~\cite{carthel}, using duality arguments.
         However, the resulting problems involve some dual spaces which are very difficult
         (if not impossible) to approximate numerically.
         In~\cite{Labbe}, in the context of approximate controllability, a relaxed observability inequality was given for general semi-discrete (in space) schemes, with the parameter $\eps$ of the order of $\Delta x$.
         The work~\cite{boyer} extends the results in~\cite{Labbe} to the fully discrete situation and proves the convergence towards a semi-discrete control, as the time step $\Delta t$ tends to zero;
         let us also mention~\cite{EV09}, where the authors prove that any controllable parabolic equation, be it discrete or continuous in space, is null-controllable after time discretization through the application of an appropriate filtering of the high frequencies.
         For a comparison of the results furnished by various methods, see~the numerical experiments in~\cite{EFC-AM-sema,EFC-AM-dual,AM-DASmixedheat}.

	Let us fix the notation
\[
	Ly := y_t  - \Delta y+ G(x,t) y, \quad L^*p := - p_t  - \Delta p+ G(x,t) p
\]	
	and let the weights $\rho$, $\betamod$ and $\rho_i$ be given by 
\begin{equation}\label{weights-0}
	\rho(\xvec,t) := e^{\betamod(\xvec) / (T-t)}, \quad \betamod(\xvec):= K_1\left(e^{K_2}-e^{\betamod_0(\xvec)}\right), 
	\quad \rho_i(\xvec,t) := (T-t)^{3/2 - i} \rho(\xvec,t), \quad i=0,1,2,
\end{equation}
	where $K_1$ and $K_2$ are sufficiently large positive constants (depending on $T$) and $\betamod_0=\betamod_0(\xvec)$ 
	is a regular bounded function that is positive in $\Om$, vanishes on $\Gamma$ and satisfies
\[
	|\nabla\betamod_0| > 0 \hbox{ in }\overline{\Om} \setminus \om;
\]
	for a justification of the existence of $\betamod_0$, see~\cite{FursikovImanuvilov}.

	The main idea relies on considering the extremal problem
\begin{equation}\label{variational_F-I_h}
	\left\{
		\begin{array}{llr}
			\dis \hbox{Minimize } \ J(y,v)=  {1\over2}\left(\iint_{Q_T}\rho^2|y|^2\,d\xvec\,dt 
			+ \iint_{q_T}\rho_0^2|v|^2\,d\xvec\,dt\right) \\
			\noalign{\smallskip}
			\dis \hbox{Subject to } \ (y,v)\in \mathcal{H}(y_0,T).\\
		\end{array}
	\right.
\end{equation}

	Here, for any $y_0 \in L^2(\Om)$ and any $T > 0$, the linear manifold $\dis\mathcal{H}(y_0,T)$ is given by
\[
	\mathcal{H}(y_0,T): = \{ (y,v): v\in L^2(q_T),~(y,v)\hbox{ satisfies \eqref{heat} and \eqref{null_condition}}\}.
\]

	We have the following result:
\begin{theorem}\label{th-3_h}
	For any $y_0 \in L^2(\Om)$ and any $T > 0$, there exists exactly one solution to~\eqref{variational_F-I_h}.
\end{theorem}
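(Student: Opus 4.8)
The plan is to establish existence and uniqueness of a minimizer of $J$ over the admissible set $\mathcal{H}(y_0,T)$ by the direct method of the calculus of variations, working in the natural Hilbert space associated with the weighted quadratic functional. First I would check that $\mathcal{H}(y_0,T)$ is nonempty: by Theorem~\ref{th-1}, there exists a null control $v\in L^2(q_T)$ steering $y_0$ to zero at time $T$, and the Carleman/observability machinery behind that result actually yields (this is the classical Fursikov--Imanuvilov estimate, see~\cite{FursikovImanuvilov}) a controlled pair $(y,v)$ for which the weighted integrals $\iint_{Q_T}\rho^2|y|^2$ and $\iint_{q_T}\rho_0^2|v|^2$ are finite. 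Hence $J$ is not identically $+\infty$ on $\mathcal{H}(y_0,T)$ and the infimum $m:=\inf_{\mathcal{H}(y_0,T)}J$ is a finite nonnegative number.

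Next I would introduce the space
\[
	\mathcal{H} := \{ (y,v) : \rho\,y \in L^2(Q_T),~\rho_0\,v\in L^2(q_T),~Ly = v1_\om \hbox{ in } Q_T,~y=0\hbox{ on }\Sigma_T\},
\]
equipped with the inner product whose induced norm is $\big(\iint_{Q_T}\rho^2|y|^2 + \iint_{q_T}\rho_0^2|v|^2\big)^{1/2}$, so that $J(y,v)=\tfrac12\|(y,v)\|_{\mathcal{H}}^2$; one checks that $\mathcal{H}$ is a Hilbert space (completeness follows because the state equation is preserved under the relevant weak limits, the weights $\rho,\rho_0$ being continuous and positive on $Q_T$, $q_T$). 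The admissible set $\mathcal{H}(y_0,T)$ is then an affine subspace of (a closed affine subset of) $\mathcal{H}$: fixing one element $(\hat y,\hat v)\in\mathcal{H}(y_0,T)$, any other element is $(\hat y,\hat v)+(z,w)$ with $(z,w)$ in the closed linear subspace $\mathcal{H}(0,T)$ of null-controlled pairs with zero initial datum. Minimizing the Hilbert-space norm over a nonempty closed affine subspace has exactly one solution, namely the orthogonal projection of the origin; this gives both existence and uniqueness at once.

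To make this rigorous by the direct method rather than invoking projection abstractly, I would take a minimizing sequence $(y_n,v_n)\in\mathcal{H}(y_0,T)$ with $J(y_n,v_n)\to m$. Then $(\rho y_n)$ is bounded in $L^2(Q_T)$ and $(\rho_0 v_n)$ is bounded in $L^2(q_T)$, so along a subsequence $\rho y_n \rightharpoonup \rho y$ and $\rho_0 v_n\rightharpoonup \rho_0 v$ weakly. The key point is to pass to the limit in the constraints: since $\rho>0$ is continuous on $Q_T$, the weak convergence of $\rho y_n$ gives $y_n\rightharpoonup y$ in $L^2_{\rm loc}$, and similarly $v_n\rightharpoonup v$; the linear PDE $Ly_n = v_n 1_\om$ with $G\in L^\infty(Q_T)$ passes to the limit in the sense of distributions, and the condition $y_n(\cdot,T)=0$ is inherited because the weight $\rho$ blows up as $t\to T$, forcing $y(\cdot,T)=0$ in a suitable trace sense (this is precisely the role of the Fursikov--Imanuvilov weights and the main technical point). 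The homogeneous boundary condition passes to the limit as well, so $(y,v)\in\mathcal{H}(y_0,T)$. Weak lower semicontinuity of the norm then gives $J(y,v)\le \liminf J(y_n,v_n)=m$, hence $(y,v)$ is a minimizer. Uniqueness follows from strict convexity of $J$ (the quadratic form is positive definite on $\mathcal{H}$) together with convexity of the affine set $\mathcal{H}(y_0,T)$: if $(y,v)$ and $(y',v')$ were two minimizers, their midpoint would be admissible with strictly smaller $J$ unless $(y,v)=(y',v')$.

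The main obstacle is the rigorous passage to the limit in the terminal condition $y(\cdot,T)=0$, i.e.\ showing that $\mathcal{H}(y_0,T)$ is closed under the weak convergence available. The honest way to do this is to absorb the constraint into the space: one shows that finiteness of $\iint_{Q_T}\rho^2|y|^2$ together with $Ly\in L^2(q_T)\cdot 1_\om$ already forces $y$ to decay to $0$ as $t\to T^-$ in the appropriate norm, so that ``$(y,v)$ satisfies \eqref{heat}'' and ``$\rho y\in L^2(Q_T)$'' automatically encode \eqref{null_condition}; this is standard in the Fursikov--Imanuvilov framework. Everything else — nonemptiness from Theorem~\ref{th-1}, coercivity, convexity, weak lower semicontinuity — is routine.
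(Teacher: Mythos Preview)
Your argument is correct, but it follows a genuinely different route from the paper's. The paper does not prove Theorem~\ref{th-3_h} by the direct method on the primal problem; instead it works on the \emph{dual} side. It introduces the space $P_0$ and the bilinear form $k(\cdot,\cdot)$ in~\eqref{defk}, uses the Carleman inequality (Proposition~\ref{prop-Carleman_h}) to show that $k$ is a scalar product whose completion $P$ is a Hilbert space on which the linear form $\ell_0$ of~\eqref{defell0} is continuous (via~\eqref{cont-C0_h}), and then applies Lax--Milgram/Riesz to the variational problem~\eqref{pb-p_h}. The unique $p\in P$ so obtained yields the minimizer through the explicit formulas~\eqref{optimal-yv-good_h}, and this is the content of Theorem~\ref{th-optimality_h}. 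In short: the paper solves a well-posed linear problem for the adjoint variable and \emph{constructs} the optimal pair, rather than extracting a weak limit from a minimizing sequence of admissible pairs.

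What each approach buys: your primal argument is conceptually self-contained (strict convexity plus closedness of an affine set), and you correctly isolate the only delicate point, namely that the blow-up of $\rho$ at $t=T$ encodes the terminal condition so that $\mathcal{H}(y_0,T)$ is closed under the available weak convergence. The paper's dual approach, on the other hand, delivers the optimality system~\eqref{eq3_h} and the representation~\eqref{optimal-yv-good_h} for free, which is precisely the structure on which all the subsequent mixed formulations and finite-element schemes of Section~\ref{sec_heat} are built. So while your proof is fine as a proof of Theorem~\ref{th-3_h} in isolation, the paper's route is the one that drives the rest of the analysis.
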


	This result is a consequence of an appropriate {\it Carleman inequality \rm for the heat equation}.
	
	More precisely, let us introduce the space
\begin{equation}\label{defP0}
	P_0 := \{ p \in C^2(\overline{Q}_T) : p = 0 \ \hbox{on} \ \Sigma_T\}.
\end{equation}
	Then, one has:
\begin{proposition}\label{prop-Carleman_h}
	There exists $C_0$, only depending on $\Om$, $\om$ and $T$, such that the following holds for all $p \in P_0$:
\begin{equation}\label{Carleman-ineq_h}
	\begin{array}{c}
		\dis \iint_{Q_T}  \left[ \rho_2^{-2}(|p_t|^2 + |\Delta p|^2) + \rho_1^{-2} |\nabla p|^2 + \rho_0^{-2} |p|^2 \right]d\xvec\,dt	
		\leq C_0\iint_{Q_T} (\rho^{-2} |L^*p|^2 + \rho_0^{-2} |p|^21_\om)\,d\xvec\,dt.
	\end{array}
\end{equation}
\end{proposition}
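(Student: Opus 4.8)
The plan is to establish \eqref{Carleman-ineq_h} as a consequence of the classical global Carleman estimate for the adjoint heat operator $L^*$, rewritten in the "dissipative" variables adapted to the weights $\rho$, $\rho_i$ defined in \eqref{weights-0}. First I would recall the standard Fursikov--Imanuvilov Carleman inequality: for a suitable weight $\alpha(\xvec,t) = (e^{K_2} - e^{\betamod_0(\xvec)})/(T-t)$ and $\xi(\xvec,t) = e^{K_2 K_1}/(T-t)$ — which, up to the multiplicative constants $K_1$, $K_2$, are precisely the exponent and the polynomial factor hidden in $\rho$ and the $\rho_i$ — there is $C_0 = C_0(\Om,\om,T)$ and $s_0 \geq 1$ such that for all $s \geq s_0$ and all $p$ smooth with $p = 0$ on $\Sigma_T$,
\[
	\iint_{Q_T} e^{-2s\alpha}\Big( (s\xi)^{-1}(|p_t|^2 + |\Delta p|^2) + s\xi|\nabla p|^2 + (s\xi)^3 |p|^2\Big)\,d\xvec\,dt
	\le C_0 \iint_{Q_T} e^{-2s\alpha}\Big( |L^*p|^2 + (s\xi)^3 |p|^2 1_\om\Big)\,d\xvec\,dt.
\]
The second step is purely bookkeeping: one fixes $s = s_0$ (or absorbs it into $K_1$), and checks that with this choice $e^{-s\alpha}$ is comparable to $\rho^{-1}$, that $e^{-s\alpha}(s\xi)^{-1/2}$ is comparable to $\rho_2^{-1} = (T-t)^{1/2}\rho^{-1}$, that $e^{-s\alpha}(s\xi)^{1/2}$ is comparable to $\rho_1^{-1} = (T-t)^{-1/2}\rho^{-1}$, and that $e^{-s\alpha}(s\xi)^{3/2}$ is comparable to $\rho_0^{-1} = (T-t)^{-3/2}\rho^{-1}$, all with constants depending only on $\Om$, $\om$, $T$, $K_1$, $K_2$. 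Since the powers of $(T-t)$ match exactly the shifts $3/2 - i$ in the definition of $\rho_i$, each term of \eqref{Carleman-ineq_h} is bounded by the corresponding term of the classical estimate and conversely.

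The only genuine subtlety — and the step I expect to be the main obstacle — is the passage from $p$ smooth vanishing on $\Sigma_T$ to $p \in P_0 = C^2(\overline{Q}_T)$ with $p = 0$ on $\Sigma_T$, together with making sure that all integrals on the left-hand side are finite so that the inequality is not vacuous. Near $t = T$ the weights $e^{-2s\alpha}$ and the $\rho_i^{-2}$ decay faster than any power of $(T-t)$ (indeed exponentially), so for $p \in C^2(\overline{Q}_T)$ every integrand on the left is dominated by $C\,e^{-2s\alpha}$ times a bounded function and hence integrable; this is where the specific structure $\rho = e^{\betamod/(T-t)}$ with $\betamod > 0$ in $\Om$ is used. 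Strictly speaking the classical Carleman estimate is first proved for $p \in C^\infty_c(Q_T)$ or for $p$ with compact support in time, and then extended by a density/truncation argument; I would carry this out by introducing a cutoff $\theta_\delta(t)$ equal to $1$ on $(0, T-2\delta)$ and vanishing near $T$, applying the estimate to $\theta_\delta p$, bounding the commutator terms $[\,L^*, \theta_\delta\,]p = -\theta_\delta' p$ by the left-hand side (exploiting that $\theta_\delta'$ is supported where $(T-t) \sim \delta$ and that $\rho_0^{-2}$ absorbs $|\theta_\delta' p|^2 \lesssim \delta^{-2}|p|^2$ on that region), and letting $\delta \to 0$ using monotone convergence. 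Once this is in place, \eqref{Carleman-ineq_h} follows immediately, and one reads off Theorem~\ref{th-3_h} in the usual way: the Carleman inequality shows that $p \mapsto \big(\iint_{Q_T}\rho^{-2}|L^*p|^2 + \iint_{q_T}\rho_0^{-2}|p|^2\big)^{1/2}$ is a norm on $P_0$, the completion $P$ is a Hilbert space, and minimizing $J$ over $\mathcal{H}(y_0,T)$ is dual to a coercive quadratic problem on $P$ whose unique minimizer $\hat p$ yields $(\hat y, \hat v) = (\rho^{-2} L^*\hat p\text{-type expressions})$; coercivity is exactly the left-hand side of \eqref{Carleman-ineq_h} controlling the state and control norms, so existence and uniqueness follow from Lax--Milgram. $\Box$
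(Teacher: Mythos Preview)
The paper does not actually prove Proposition~\ref{prop-Carleman_h}: it is stated as a known result, implicitly attributed to Fursikov--Imanuvilov~\cite{FursikovImanuvilov} (the same reference invoked a few lines earlier for the construction of $\betamod_0$ and for Theorem~\ref{th-1}). So there is no ``paper's own proof'' to compare against; your outline is essentially a reconstruction of the standard argument in that reference, and in that sense it is consistent with what the paper relies on.

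Two remarks on the outline itself. First, the truncation-in-time step you flag as ``the main obstacle'' is not needed here: the space $P_0$ already consists of functions in $C^2(\overline{Q}_T)$, and since $\rho^{-2} = e^{-2\betamod/(T-t)}$ with $\betamod>0$ decays faster than any power of $(T-t)$ as $t\to T$, every boundary term arising in the Fursikov--Imanuvilov integration-by-parts computation vanishes at $t=T$ automatically, and all weighted integrals are finite. The estimate is proved directly on $P_0$ without cutoff; the density argument is only used later, in the paper, to pass from $P_0$ to its completion $P$. Second, your final paragraph drifts into sketching Theorem~\ref{th-3_h} and Theorem~\ref{th-optimality_h} (Lax--Milgram on $P$, construction of the optimal pair), which is a separate statement from Proposition~\ref{prop-Carleman_h} and should not be bundled into this proof.
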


	Let us introduce the bilinear form $k(\cdot,\cdot)$, with
\begin{equation}\label{defk}
	k(p,p') := \iint_{Q_T} \!\left(\rho^{-2}L^*p\,L^*p' + 1_\omega \rho_0^{-2}p\,p' \right)d\xvec\,dt \quad \forall p,p'\in P_0.
\end{equation}
	In view of the unique continuation property of the heat equation, $k(\cdot\,,\cdot)$ is a scalar product in $P_0$. 
	Indeed, if $p \in P_0$, $L^*p = 0$~in~$Q_T$,~$p = 0$~on~$\Sigma_T$~and~$p = 0$ in~$q_T$, then we necessarily 
	have $p \equiv 0$.

	Let $P$ be the completion of $P_0$ with respect to this scalar product. Then $P$ is a Hilbert space, the functions $p \in P$ satisfy
\begin{equation}\label{finite-rhs_h}
	\iint_{Q_T} \rho^{-2} |L^*p|^2\,d\xvec\,dt + \iint_{q_T} \rho_0^{-2} |p|^2\,d\xvec\,dt < +\infty
\end{equation}
	and, from Proposition~\ref{prop-Carleman_h} and a standard density argument, we also have \eqref{Carleman-ineq_h} for all $p \in P$.

   	Another consequence of Proposition~\ref{prop-Carleman_h} is that we can characterize the space $P$ as follows:
\begin{equation}\label{defP}
	\dis P=\left\{\,p:p, p_t, \partial_j p, \partial_{jk}p \in L^2(0,T-\delta;L^2(\Om)) \ \forall \delta > 0,\ \eqref{finite-rhs_h}\ 
	\hbox{holds}, \ p = 0 \ \hbox{on} \ \Sigma\,\right\}.
\end{equation}

	In particular, we see that any $p \in P$ satisfies $p \in C^0([0,T-\delta];H_0^1(\Om))$ for all
	$\delta>0$ and, moreover,
\begin{equation}\label{cont-C0_h}
	\| p(\cdot\,,0) \|_{H^1_0} \leq C \, k(p,p)^{1/2} \quad \forall p \in P.
\end{equation}

	The main ideas used in this paper to solve numerically \eqref{variational_F-I_h} rely on the following result\,:

\begin{theorem}\label{th-optimality_h}
	Let the weights $\rho$ and $\rho_0$ be chosen as in Proposition~\ref{prop-Carleman_h}. 
	Let $(y,v)$ be the unique solution~to~\eqref{variational_F-I_h}. Then one has
\begin{equation}\label{optimal-yv-good_h}
	y = \rho^{-2} L^*p, \quad v = -\rho_0^{-2}\bigl.p\bigr|_{q_T},
\end{equation}
	where $p$ is the unique solution to the following variational equality in the Hilbert space $P$:
\begin{equation}\label{pb-p_h}
	\left\{
		\begin{array}{l}
			\dis \iint_{Q_T}\left(\rho^{-2}L^*p\,L^*p' + 1_\omega \rho_0^{-2}p\,p' \right)\,d\xvec\,dt	= \int_\Omega \!y_0(\xvec)\,p'(\xvec,0)\,d\xvec \\
			\noalign{\smallskip}
			\dis \forall~ p' \in P; \ p \in P.
		\end{array}
	\right.
\end{equation}
\end{theorem}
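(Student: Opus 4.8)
The plan is to derive the optimality system for the convex extremal problem \eqref{variational_F-I_h} and then to rephrase it as the single variational equality \eqref{pb-p_h}. First I would introduce the change of unknowns suggested by the weights: writing any admissible pair in $\mathcal{H}(y_0,T)$ and testing against variations that preserve the constraint, the Euler--Lagrange conditions for $J$ produce a co-state. Concretely, a perturbation $(\hat y,\hat v)$ is admissible-feasible iff $\hat y_t-\Delta\hat y+G\hat y=\hat v1_\om$ in $Q_T$, $\hat y=0$ on $\Sigma_T$, $\hat y(\cdot,0)=0$ and $\hat y(\cdot,T)=0$; stationarity of $J$ at the minimizer $(y,v)$ then reads
\[
	\iint_{Q_T}\rho^2 y\,\hat y\,d\xvec\,dt+\iint_{q_T}\rho_0^2 v\,\hat v\,d\xvec\,dt=0
\]
for all such $(\hat y,\hat v)$. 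Introducing a Lagrange multiplier $p$ for the state equation and integrating by parts (using the homogeneous conditions on $\hat y$ at $t=0$, $t=T$ and on $\Sigma_T$), one is led to $\rho^2 y=L^*p$ in $Q_T$ and $\rho_0^2 v=-p1_{q_T}$, i.e. exactly \eqref{optimal-yv-good_h}; the boundary condition $p=0$ on $\Sigma_T$ is inherited from duality against the Dirichlet problem for $\hat y$.

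Next I would substitute \eqref{optimal-yv-good_h} back into the state equation and the initial condition to obtain a closed problem for $p$ alone. Since $y=\rho^{-2}L^*p$ must solve $Ly=v1_\om=-\rho_0^{-2}p1_\om$ with $y(\cdot,0)=y_0$, multiplying by an arbitrary test function $p'$ and integrating by parts in $Q_T$ transfers $L$ onto $p'$ as $L^*$, picks up the boundary term $\int_\Om y_0(\xvec)p'(\xvec,0)\,d\xvec$ from the time integration (the terminal term vanishes because $p'(\cdot,T)$ is controlled by the weights and $y(\cdot,T)=0$), and yields precisely the left-hand side of \eqref{pb-p_h}. Thus any smooth solution of the optimality system solves the variational equality, and conversely, given $p\in P$ solving \eqref{pb-p_h}, the functions defined by \eqref{optimal-yv-good_h} lie in the right spaces (by \eqref{Carleman-ineq_h}) and are checked to belong to $\mathcal{H}(y_0,T)$ and to satisfy the first-order condition, hence form the minimizer by convexity and Theorem~\ref{th-3_h}.

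Finally, existence and uniqueness of $p\in P$ solving \eqref{pb-p_h} is a direct application of Lax--Milgram. The bilinear form is exactly $k(\cdot,\cdot)$ of \eqref{defk}, which is by construction the inner product of the Hilbert space $P$, so continuity and coercivity on $P$ are automatic with constant one. The linear form $p'\mapsto\int_\Om y_0(\xvec)p'(\xvec,0)\,d\xvec$ is continuous on $P$ precisely because of the trace estimate \eqref{cont-C0_h}: $|\int_\Om y_0 p'(\cdot,0)\,d\xvec|\le\|y_0\|_{L^2}\|p'(\cdot,0)\|_{H^1_0}\le C\|y_0\|_{L^2}\,k(p',p')^{1/2}$. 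The main obstacle is not any single computation but rather the justification that the formal integrations by parts are legitimate for functions in the completion $P$ rather than just in $P_0$: one must invoke the density of $P_0$ in $P$, the characterization \eqref{defP}, and the regularity $p\in C^0([0,T-\delta];H^1_0(\Om))$ together with \eqref{cont-C0_h} to pass to the limit in the boundary terms at $t=0$ and to control the terminal behaviour near $t=T$. Once this approximation argument is in place, the equivalence between the minimization problem, the optimality system and \eqref{pb-p_h} follows.
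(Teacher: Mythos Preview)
The paper does not supply its own proof of Theorem~\ref{th-optimality_h}; it states the result and moves directly to its interpretation as the boundary-value problem~\eqref{eq3_h}, treating it as part of the Fursikov--Imanuvilov machinery (for the analogous Stokes version, Theorem~\ref{th-optimality}, the paper explicitly refers to~\cite{FursikovImanuvilov}). So there is no in-paper argument to compare against.

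Your outline is the standard derivation and is essentially correct. The three ingredients --- (i)~Euler--Lagrange conditions on the convex functional $J$ over the affine constraint set, yielding the adjoint relation $\rho^2 y=L^*p$, $\rho_0^2 v=-p\,1_\omega$; (ii)~substitution into the state equation and integration by parts against a test $p'$ to obtain~\eqref{pb-p_h}; (iii)~Lax--Milgram on $P$ using $k(\cdot,\cdot)$ as the inner product and~\eqref{cont-C0_h} for continuity of the right-hand side --- are exactly what is needed. You also correctly flag the only real technical point: the integrations by parts and the handling of the terminal trace must be justified by density of $P_0$ in $P$ together with the characterization~\eqref{defP}. One small refinement worth making explicit in the converse direction: once $p\in P$ solves~\eqref{pb-p_h} and you set $y=\rho^{-2}L^*p$, the terminal condition $y(\cdot,T)=0$ is \emph{not} imposed separately but follows automatically from $\rho y=\rho^{-1}L^*p\in L^2(Q_T)$ and the blow-up of $\rho$ at $t=T$; this is the mechanism by which the Fursikov--Imanuvilov weights absorb the terminal constraint and avoid an extra multiplier.
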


\

	We can interpret \eqref{pb-p_h} as the weak formulation of a boundary-value problem for a 
	PDE that is fourth-order in~$\xvec$ and second-order in~$t$. Indeed, taking ``test functions'' $p' \in P$ first with 
	$p' \in C_0^\infty(Q_T)$, then $p' \in C^2(\overline{\Om} \times (0,T))$ and finally $p' \in C^2(\overline{Q}_T)$, 
	we see easily that $p$ must necessarily satisfy\,:
\begin{equation}\label{eq3_h}
	\left\{
		\begin{array}{lcl}
     			L(\rho^{-2}L^*p) + 1_\om \rho_0^{-2}p = 0 & \text{in}& Q_T,\\ 
			\noalign{\smallskip}
			p = 0, \ \ \rho^{-2}L^*p = 0 & \text{on}& \Sigma_T,\\ 
			\noalign{\smallskip}
			\bigl.\rho^{-2}L^*p\bigr|_{t=0} = y_0,\ \ \bigl.\rho^{-2}L^*p\bigr|_{t=T} = 0 & \text{in}& \Om.
		\end{array}
	\right.
\end{equation}

	By introducing the linear form $\ell_0$, with
\begin{equation}\label{defell0}
	\langle \ell_0,p \rangle := \int_\Omega y_0(\xvec)\,p(\xvec,0)\,d\xvec \quad \forall p \in P,
\end{equation}
	we see from \eqref{cont-C0_h} that $\ell_0$ is continuous and \eqref{pb-p_h} can be rewritten in the form 
\begin{equation}\label{pb-p-s_h}
	\dis k(p,p') = \langle \ell_0,p' \rangle \quad \forall p' \in P; \ p \in P.
\end{equation}

	Let $P_h$ denote a finite dimensional subspace of~$P$. A natural approximation of~\eqref{pb-p-s_h} 
	is the following:
\begin{equation}\label{pb-p-h_h}
	\dis k(p_h,p'_h) = \langle \ell_0,p'_h \rangle \quad \forall p'_h \in P_h; \ p_h \in P_h.
\end{equation}

	Thus, to solve numerically the variational equality \eqref{pb-p-s_h}, it suffices to 
	construct explicitly finite dimensional spaces $P_h \subset P$. Notice however that this is possible but needs some work. 
	The reason is that, if $p \in P_h$, then $\rho^{-1}L^*p = \rho^{-1}(-p_t - \Delta p + G(\xvec,t)\,p)$  and 
	$\rho_0^{-1}p1_{\om}$ must belong to~$L^2(Q_T)$. Consequently, $p_h$ must possess first-order time derivatives and up 
	to second-order spatial derivatives in $L_{\rm loc}^2(Q_T)$.
	Therefore, an approximation based on a standard triangulation 
	of~$Q_T$ requires spaces $P_h$ of functions that must be $C^0$ in~$(\xvec,t)$ and $C^1$ in~$\xvec$ and this can be 
	complex and too expensive. Spaces of this kind are constructed for instance in~\cite{ciarletfem}. For example, 
	good behavior is observed for the so called {\it reduced HTC}, {\it Bell} or {\it Bogner-Fox-Schmidt} finite elements; the reader is referred to
	\cite{EFC-AM-sema,AM-DASmixedheat} for numerical approximations of this kind in the framework of 
	one spatial dimension.
	
	In spite of its complexity, the direct approximation of \eqref{pb-p-h_h} has an advantage: it is possible to adapt
	the standard finite element theory to this framework and deduce strong convergence results for the numerical controls and states.


\subsection{First mixed formulation with modified variables}

	Let us introduce the new variable
\begin{equation}\label{new-var_h}
	z :=L^*p 
\end{equation}
	and let us set $Z :=  L^2(\rho^{-1};Q_T)$. Then $z \in Z$ and $L^*p - z = 0$ (an equality in~$Z$).

	Notice that this identity can also be written in the form
$$
	\iint_{Q_T} \left(z-L^*p \right)\psi\,\,d\xvec\,dt= 0 \quad \forall \psi \in C_0^\infty(Q_T);
$$
	Accordingly, we introduce the following reformulation of~\eqref{pb-p-s_h}:
\begin{equation}\label{pb-p-m_h}
	\left\{
		\begin{array}{l}
			\dis \iint_{Q_T}\left(\rho^{-2} z\,z' + \rho_0^{-2}p\,p'1_\omega \right)d\xvec\,dt  
			+ \iint_{Q_T}\left(z'-L^*p'  \right) \lambda\,\,d\xvec\,dt 
			= \int_\Om y_0(\xvec) \, p'(\xvec,0) \,d\xvec,\\ 
			\noalign{\smallskip}\dis
			\dis \iint_{Q_T}\left( z-L^*p  \right) \lambda'\,d\xvec\,dt  = 0,\\ 
			\noalign{\smallskip}\dis
			\dis \qquad \forall (z',p',\lambda') \in Z\times P \times \Lambda ; \ ((z,p),\lambda) \in Z\times P \times \Lambda,
		\end{array}
	\right.
\end{equation}
	where, $\Lambda := L^2(\rho;Q_T)$.
 
	Notice that $Z$, $P$ and~$\Lambda$ are the appropriate spaces to keep all the terms in \eqref{pb-p-m_h} meaningful.

   	Let us introduce the bilinear forms $\alpha(\cdot\,,\cdot)$ and $\beta(\cdot\,,\cdot)$, with
$$
	\alpha((z,p),(z',p')) := \iint_{Q_T}\left(\rho^{-2}  z\,z' + \rho_0^{-2}p\,p'1_\omega  \right)\,d\xvec\,dt \quad
	\forall (z,p),(z',p')\in Z\times P
$$
	and
$$
	\beta((z,p),\lambda) := \iint_{Q_T}\left[ L^*p - z \right] \lambda\,d\xvec\,dt\forall (z,p)\in Z\times P,\,~\forall
	\lambda\in \Lambda
$$
	and the linear form $\ell: X \mapsto \mathbb{R} $, with
\[
	\langle \ell,(z,p) \rangle := \int_\Om \! y_0(\xvec) \, p(\xvec,0)\,d\xvec\,~\forall (z,p)\in Z\times P.
\]
   	Then, $\alpha(\cdot\,,\cdot)$, $\beta(\cdot\,,\cdot)$ and $\ell$ are well-defined and continuous and \eqref{pb-p-m_h} reads:
\begin{equation}\label{m_h}
	\left\{
		\begin{array}{l}
			\dis \alpha((z,q),(z',p')) + \beta((z',p'),\lambda) = \langle \ell,(z',p') \rangle,		\\ 
			\noalign{\smallskip}\dis
			\dis \beta((z,p),\lambda') = 0,									     	\\ 
			\noalign{\smallskip}\dis
			\dis \qquad \forall (z',p',\lambda') \in Z\times P \times \Lambda ; \ ((z,p),\lambda) \in Z\times P \times \Lambda.
		\end{array}
	\right.
\end{equation}

	This is a mixed formulation of the variational problem~\eqref{variational_F-I_h}. In fact, the following result holds:
\begin{proposition}\label{prop-equiv_h}
   	There exists exactly one solution to~\eqref{m_h}. Furthermore, \eqref{pb-p-s_h} and~\eqref{m_h} are equivalent 
	problems in the following sense:
\begin{enumerate}
	\item 
		If $((z,p),\lambda)$ solves \eqref{m_h}, then $p$ solves \eqref{pb-p-s_h}.
	\item 
		Conversely, if $p$ solves \eqref{pb-p-s_h}, there exists $\lambda \in \Lambda$ such that the triplet $((z,p),\lambda)$, with
		$z := L^*p$ solves~\eqref{m_h}.
\end{enumerate}
\end{proposition}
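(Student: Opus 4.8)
The plan is to recognize \eqref{m_h} as a classical saddle-point problem and to apply the Babuška--Brezzi theory of mixed variational formulations. The continuity of $\alpha$, $\beta$ and $\ell$ on $Z\times P$, $(Z\times P)\times\Lambda$ and $Z\times P$ has already been observed, so it only remains to verify the two structural hypotheses: coercivity of $\alpha$ on the kernel of $\beta$, and an inf-sup (LBB) condition for $\beta$. The kernel of $\beta$ is
\[
	\mathcal{N} := \{(z,p)\in Z\times P : L^*p = z \hbox{ in } Z\},
\]
which makes sense because $L^*p\in Z$ for every $p\in P$ by \eqref{finite-rhs_h}; moreover $p\mapsto(L^*p,p)$ is a linear isomorphism of $P$ onto $\mathcal{N}$. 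On $\mathcal{N}$ one has $\alpha((z,p),(z,p)) = \iint_{Q_T}\bigl(\rho^{-2}|L^*p|^2 + \rho_0^{-2}|p|^2 1_\omega\bigr)\,d\xvec\,dt = k(p,p) = \|p\|_P^2$, while $\|z\|_Z^2 = \iint_{Q_T}\rho^{-2}|L^*p|^2\,d\xvec\,dt \le k(p,p)$; hence $\|(z,p)\|_{Z\times P}^2 \le 2\,\alpha((z,p),(z,p))$ on $\mathcal{N}$. This is precisely the point where the Carleman inequality of Proposition~\ref{prop-Carleman_h}, encoded in the fact that $k(\cdot,\cdot)^{1/2}$ is the norm of $P$, is used. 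For the inf-sup condition, given $\lambda\in\Lambda$ we take the test pair $p'=0$ and $z' := -\rho^2\lambda$; then $\|z'\|_Z^2 = \iint_{Q_T}\rho^{-2}|\rho^2\lambda|^2\,d\xvec\,dt = \|\lambda\|_\Lambda^2<+\infty$, so $(z',0)\in Z\times P$, and $\beta((z',0),\lambda) = -\iint_{Q_T} z'\lambda\,d\xvec\,dt = \|\lambda\|_\Lambda^2 = \|(z',0)\|_{Z\times P}\,\|\lambda\|_\Lambda$, which yields an inf-sup constant $\ge 1$. The Babuška--Brezzi theorem then gives existence and uniqueness of a solution $((z,p),\lambda)\in Z\times P\times\Lambda$ to \eqref{m_h}.

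Next I would prove the equivalence. For item~1, assume $((z,p),\lambda)$ solves \eqref{m_h}. Choosing $\lambda' := \rho^{-2}(L^*p - z)\in\Lambda$ in the second equation gives $\iint_{Q_T}\rho^{-2}|L^*p - z|^2\,d\xvec\,dt = 0$, hence $z = L^*p$ in $Z$. Then, taking $p'=0$ and $z' := L^*p - \rho^2\lambda\in Z$ in the first equation (the load and the $\rho_0^{-2}$-term drop out since $p'=0$ and $\langle\ell,(z',0)\rangle=0$), one obtains $\iint_{Q_T}\rho^2\,|\rho^{-2}L^*p - \lambda|^2\,d\xvec\,dt = 0$, so $\lambda = \rho^{-2}L^*p$. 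Finally, substituting $z = L^*p$ and $\lambda = \rho^{-2}L^*p$ in the first equation of \eqref{m_h} with $z'=0$ and an arbitrary $p'\in P$ leaves exactly $\iint_{Q_T}\bigl(\rho^{-2}L^*p\,L^*p' + \rho_0^{-2}p\,p'1_\omega\bigr)\,d\xvec\,dt = \int_\Omega y_0(\xvec)\,p'(\xvec,0)\,d\xvec$, i.e.\ $k(p,p') = \langle\ell_0,p'\rangle$ for all $p'\in P$, which is \eqref{pb-p-s_h}. For item~2, conversely, if $p$ solves \eqref{pb-p-s_h} I set $z := L^*p\in Z$ and $\lambda := \rho^{-2}L^*p\in\Lambda$ (both belong to the right spaces since $\iint_{Q_T}\rho^{-2}|L^*p|^2<+\infty$). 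The second equation of \eqref{m_h} holds trivially because $z = L^*p$; in the first equation, the terms $\iint_{Q_T}\rho^{-2}L^*p\,z'$ coming from $\alpha$ and from $\beta((z',p'),\lambda) = \iint_{Q_T}(L^*p'-z')\rho^{-2}L^*p\,d\xvec\,dt$ cancel, and what remains is $k(p,p') = \langle\ell_0,p'\rangle = \langle\ell,(z',p')\rangle$, which holds by \eqref{pb-p-s_h}. Hence $((z,p),\lambda)$ solves \eqref{m_h}. (Uniqueness for \eqref{m_h} also follows, alternatively, from this equivalence together with the uniqueness of \eqref{pb-p-s_h}, which is itself immediate from the Lax--Milgram lemma applied to the $P$-coercive form $k$.)

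The only genuinely non-routine ingredient here is the coercivity of $\alpha$ on $\mathcal{N}$, and this is nothing but a restatement of the Carleman estimate \eqref{Carleman-ineq_h}; the inf-sup property for $\beta$ is essentially trivial because the variable $z$ ranges over a full weighted $L^2$-space and enters $\beta$ linearly. The main care to be taken in the write-up is therefore bookkeeping in the weighted spaces: one must check each time that the ad hoc test functions ($-\rho^2\lambda$, $L^*p - \rho^2\lambda$, $\rho^{-2}(L^*p-z)$, $\rho^{-2}L^*p$) indeed belong to $Z$ or $\Lambda$, which is immediate from the definitions $Z = L^2(\rho^{-1};Q_T)$, $\Lambda = L^2(\rho;Q_T)$ and from \eqref{finite-rhs_h}.
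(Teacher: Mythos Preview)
Your proof is correct and follows essentially the same route as the paper: apply Babu\v{s}ka--Brezzi, prove coercivity of $\alpha$ on the kernel $V=\{(z,p):z=L^*p\}$ by rewriting $\alpha((z,p),(z,p))$ as $k(p,p)$, and establish the inf-sup condition for $\beta$ by testing with $(z^0,p^0)=(-\rho^2\lambda,0)$. The paper's proof stops after verifying these two hypotheses and leaves the equivalence items~1 and~2 implicit as a standard consequence of the mixed theory; your explicit identification $\lambda=\rho^{-2}L^*p$ and the direct verification of both directions go beyond what the paper writes but are correct and harmless.
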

\begin{proof}
	Let us introduce the space
$$
	V:=\{(z,p)\in Z\times P: \beta((z,p),\lambda) = 0,~ \forall \lambda\in \Lambda\}.
$$

	We will check that
\begin{itemize}
	\item 
		$\alpha(\cdot\,,\cdot)$ is coercive in $V$.
	\item 
		$\beta(\cdot\,,\cdot)$ satisfies the usual ``inf-sup" condition with respect to $Z\times P$ and $\Lambda$.
\end{itemize}

	This will be sufficient to guarantee the existence and uniqueness of a solution to \eqref{m_h}; see for instance~\cite{BrezziFortin, rob-tho}.

	The proofs of the previous assertions are straightforward. Indeed, we first notice that, for any $(z,p)\in V$, $z=L^*p$ and thus
\[
	\begin{array}{lll}
		\dis\alpha((z,p),(z,p))	&=&\dis \iint_{Q_T} \left( \rho^{-2} |z|^2 + \rho_0^{-2}|p|^21_\omega\ \right)\,d\xvec\,dt 	\\
		\noalign{\smallskip}\dis
						&=&\dis {1\over2}\iint_{Q_T} \rho^{-2} |z|^2\,d\xvec\,dt
							+\frac{1}{2}\iint_{Q_T}\rho^{-2}|L^*p|^2\,d\xvec\,dt
							+\iint_{Q_T}\rho_0^{-2}|p|^21_\omega\,d\xvec\,dt							\\
		\noalign{\smallskip}\dis
						&=&\dis \frac{1}{2}\|(z,p)\|^2_{Z\times P}+\frac{1}{2}\iint_{Q_T}\rho_0^{-2}|p|^21_\omega\,d\xvec\,dt	\\
		\noalign{\smallskip}\dis
						&\geq&\dis\frac{1}{2}\|(z,p)\|^2_{Z\times P}.
	\end{array}
\]
	This proves that $\alpha(\cdot\,,\cdot)$ is coercive in $V$. 
	
	On the other hand, for any $\lambda\in \Lambda$ there exists 
	$(z^0,p^0)\in X$ such that 
$$
	\beta((z^0,p^0),\lambda)=\|\lambda\|^2_\Lambda\quad\hbox{and}\quad\|(z^0,p^0)\|_{Z\times P}\leq C \|\lambda\|_\Lambda.
$$

	Indeed, we can take for instance $(z^0,p^0)=(-\rho^{2} \lambda,0)$. Consequently,
$$
	\sup\limits_{(z,p)\in X} \frac{\beta((z,p),\lambda)}{\|(z,p)\|_{Z\times P}}\geq \frac{\beta((z^0,p^0),\lambda)}
	{\|(z^0,p^0)\|_{Z\times P}}\geq \frac{1}{C}\|\lambda\|_\Lambda.
$$ 
	Hence, $\beta(\cdot\,,\cdot)$ certainly satisfies the ``inf-sup" condition in $Z\times P\times \Lambda$.
\end{proof}

	An advantage of \eqref{m_h} with respect to the previous formulation \eqref{pb-p-s_h} is that the solution $((z,p),\lambda)$
	furnishes directly the state-control couple that solves \eqref{variational_F-I_h}. Indeed, it suffices to take
$$
	y=\rho^{-1}z,\quad v=-\rho_0^{-2}p|_{q_T}.
$$
	However, we still find spatial second-order derivatives in the integrals in \eqref{m_h} and, consequently, a finite element
	approximation of \eqref{m_h} still needs $C^1$ in space functions. 


\subsection{Second mixed formulation with modified variables}

	Let us introduce the spaces
\[
	\begin{alignedat}{2}
		\noalign{\smallskip}\dis
		\tilde{P}:=~&\bigg\{\, p:\ \dis \iint_{Q_T}\left[ \rho_2^{-2}| p_t|^2 +\rho_1^{-2}|\nabla  p|^2
					+\rho_0^{-2}|  p|^2\right] d\xvec\,dt< +\infty, \ p\bigr|_{\Sigma_T}=0 \bigg\},		\\					\noalign{\smallskip}\dis
		\tilde{\Lambda}:=~&\bigg\{\, \lambda:\ \dis
					\iint_{Q_T}\left( \rho_2^{2}|\lambda|^2 +\rho_1^{2}|\nabla\lambda|^2\right)d\xvec\,dt< +\infty,\ \lambda\bigr|_{\Sigma_T} = 0 \bigg\},
	\end{alignedat}
\]
	the bilinear forms $\tilde{\alpha}(\cdot\,,\cdot)$ and $\tilde{\beta}(\cdot\,,\cdot)$, with
$$
	\tilde{\alpha}(( z, p),( z', p')) := \iint_{Q_T} \left( \rho^{-2} z\, z' + \rho_0^{-2} p\, p'1_\omega  \right)\,d\xvec\,dt
	\quad\forall (z,p),(z',p') \in Z\times \tilde{P}
$$
	and
\[
	\tilde{\beta}(( z, p), \lambda):=
	\dis\iint_{Q_T} \left[\left( z+ p_t-G(\xvec,t)\, p\right) \lambda-\nabla p\cdot \nabla \lambda\right]\,d\xvec\,dt
	\quad \forall(z,p)\in Z\times\tilde{P},\,~\forall\lambda\in \tilde{\Lambda} \\
\]
	and the linear form $\tilde{\ell}$, with
   $$
\langle \tilde{\ell},( z, p) \rangle: = \int_\Om  y_0(\xvec) \,  p(\xvec,0)\,d\xvec\quad\forall(z,p)\in Z\times\tilde{P}.
   $$
   			
	Then $\tilde{\alpha}(\cdot\,,\cdot)$ and $\tilde{\beta}(\cdot\,,\cdot)$ are well-defined and continuous.
	The linear form $\tilde{\ell}$ is also continuous on $Z\times \tilde{P}$, since the functions in $\tilde{P}$
	satisfy $p\in C^0([0,T-\delta];L^2(\Om))~ \forall \delta>0$ and
$$
	\|p(\cdot,0)\|_{L^2(\Om)}\leq C\left(\iint_{Q_T}(\rho_2^2|p_t|^2+\rho_1^2|\nabla p|^2)\,dx\,dt\right)^{1/2}
	\leq C\|p\|_{\tilde{P}}.
$$

	Let us consider the mixed formulation
\begin{equation}\label{m_hh}
	\left\{
		\begin{array}{l}
			\dis \tilde{\alpha}(( z, p),( z', p')) + \tilde{\beta}(( z', p'), \lambda) = \langle \tilde{\ell},( z', p') \rangle,	\\ 
			\noalign{\smallskip}
			\dis \tilde{\beta}(( z, p), \lambda') = 0,													\\ 
			\noalign{\smallskip}
			\dis \qquad \forall ( z', p', \lambda') \in Z\times\tilde{P}\times\tilde{\Lambda}; \ 
			( z, p, \lambda) \in Z\times\tilde{P}\times\tilde{\Lambda}.
		\end{array}
	\right.
\end{equation}

	Notice that the definitions of $Z$, $\tilde{P}$ and~$\tilde{\Lambda}$ are again the appropriate to keep 
	all the terms in \eqref{m_hh} meaningful.

	It is easy to see that any possible solution to \eqref{m_hh} also solves \eqref{m_h}. Indeed, if $(z,p,\lambda)$ 
	solves~\eqref{m_hh}, then $z=L^*p$ in the sence of $\mathscr{D}'(Q_T)$, whence $p\in P$; thus, the integration by parts with
	respect to the spatial variables is fully justified in $\tilde\beta(z,p,\lambda)$ and $(z,p,\lambda)$ certainly solves 
	\eqref{m_h}.
	
	Consequently, there exists at most one solution
	to \eqref{m_hh}. However, unfortunately, a rigorous proof of the existence of a solution to \eqref{m_hh} is, to our knowledge, unknown.
	In practice, what we would need to prove is that the following ``inf-sup'' condition holds:
$$
	\inf\limits_{ \lambda\in \tilde \Lambda}\sup\limits_{( z, p)\in Z\times \tilde P} 
	{\tilde\beta(( z, p),\lambda)\over \|( z, p)\|_{Z\times \tilde P} \| \lambda\|_{\tilde \Lambda}}>0.
$$ 
	But whether or not this holds is an open question.


\subsection{A reformulation of \eqref{m_hh}}

	It is very convenient from the numerical viewpoint to introduce the following new variables:
\begin{equation}\label{new-var_hh}
	\hat z := \rho^{-1}L^*p, \quad \hat p: = \rho_0^{-1}p.
\end{equation}

	This will serve to improve the conditioning  of the approximations given below. 
	
	The mixed
	problem \eqref{m_hh} can be rewritten in the new variables as follows:
\begin{equation}\label{m_hhh}
	\left\{
		\begin{array}{l}
			\dis \hat{\alpha}((\hat z,\hat p),(\hat z',\hat p')) + \hat{\beta}((\hat z',\hat p'),\hat \lambda) 
			= \langle \hat{\ell},(\hat z',\hat p') \rangle,	\\ 
			\noalign{\smallskip}
			\dis \hat{\beta}((\hat z,\hat p),\hat \lambda') = 0,													\\ 
			\noalign{\smallskip}
			\dis \qquad \forall (\hat z',\hat p',\hat \lambda') \in \hat{Z}\times \hat{P}\times\hat{\Lambda}; \ (\hat z,\hat p,\hat \lambda) \in \hat{Z}\times \hat{P}\times\hat{\Lambda},
		\end{array}
	\right.
\end{equation}
	where\: 
\[
	\begin{alignedat}{2}
				\noalign{\smallskip}\dis
				\hat{Z}:=~&L^2(Q_T),						\\
				\noalign{\smallskip}\dis
				\hat{P}:=~&
				\left\{\, \hat p:\ \dis \iint_{Q_T}
				\left[ (T-t)^4|\hat p_t|^2 +(T-t)^2|\nabla \hat p|^2
				+| \hat p|^2\right] d\xvec\,dt< +\infty,~\hat p\bigr|_{\Sigma_T}=0 \right\},		\\
				\noalign{\smallskip}\dis
				\hat{\Lambda}:=~&\bigg\{\, \hat \lambda:\ \dis
				\iint_{Q_T}\left[ (T-t)^{-1}|\lambda|^2 +(T-t)|\nabla \lambda|^2\right]d\xvec\,dt< +\infty,~
				\hat \lambda\bigr|_{\Sigma_T} = 0 \bigg\}
	\end{alignedat}
\]
	and the bilinear forms $\hat{\alpha}(\cdot\,,\cdot)$ and 
	$\hat{\beta}(\cdot\,,\cdot)$ are given by
$$
	\hat{\alpha}((\hat z,\hat p),(\hat z',\hat p')) := \iint_{Q_T} \left( \hat z\,\hat z' + \hat p\,\hat p'1_\omega  \right)\,d\xvec\,dt \quad \forall \hat{Z}\times\hat{P}
$$
	and 
\[
	\left\{
	\begin{alignedat}{3}
		\noalign{\smallskip}\dis
		\hat{\beta}((\hat z,\hat p),\hat \lambda):=&\dis\iint_{Q_T} (T-t)^{3/2}\left({\hat p}_t\hat \lambda
		-\nabla \hat p\cdot \nabla \hat \lambda -G \,\hat p\,\hat \lambda\right)\,d\xvec\,dt	\\
		&\dis+\iint_{Q_T}\left[ \hat z+ 2(T-t)^{1/2}\nabla \betamod\cdot\nabla \hat p\right]\hat \lambda\,d\xvec\,dt		\\
		\noalign{\smallskip}\dis
		&\dis+\iint_{Q_T}\left[(T-t)^{1/2}\left(-3/2+\Delta \betamod\right)
		+(T-t)^{-1/2}\left(\betamod+|\nabla\betamod|^2\right)\right]\hat p\, \hat \lambda\,d\xvec\,dt					\\
		\forall(\hat z,\hat p)\in\hat{Z}\times&\hat{P}, \,~\forall\hat\lambda\in\hat{\Lambda}.
	\end{alignedat}
	\right.
\]
	and the linear form $\hat{\ell}: \hat{R}\mapsto \mathbb{R}$ is given by
\[
	\langle \hat{\ell},(\hat z,\hat p) \rangle := \int_\Om  \rho_0(\xvec,0) y_0(\xvec) \,\hat  p(\xvec,0)\,d\xvec \quad
	\forall (\hat z,\hat p)\in\hat{Z}\times\hat{P}.
\]
  

\subsection{A numerical approximation based on Lagrangian finite elements}\label{mixed_heat}

	For simplicity, it will be assumed in the sequel that $\Om$ is a polygonal domain and 
	$\om$ is a polygonal subset of~$\Om$. Let $\mathcal{T}_\kappa$ be a classical 
	$2$-simplex triangulation of $\overline{\Om}$ such that 
	$\overline\om=\bigcup_{F\in\mathcal{T}_\kappa,F\subset\om}F$ and let $\mathcal{P}_\tau$ denote a partition of the time interval $[0,T]$.
	Here, $\kappa$ and $\tau$ denote the respective mesh size parameters. We will use the notation $h := (\kappa,\tau)$ and 
	we will denote by $\mathcal{Q}_h$ the family of all sets of the form
$$
	K = F \times [t_1,t_2], \ \ \hbox{with} \ F \in \mathcal{T}_\kappa, \ [t_1,t_2] \in \mathcal{P}_\tau
$$
	and by $\mathcal{R}_h$ the subfamily of the sets $K = F \times [t_1,t_2]\in \mathcal{Q}_h$,
	such that  $F\subset \om$. We have
$$
	\overline Q_T=\bigcup\limits_{K\in \mathcal{Q}_h} K\quad\hbox{and}
	\quad \overline q_T=\bigcup\limits_{K\in \mathcal{R}_h}K.
$$	

	For any couple of integers $m, n\geq 1$, we will set
$$
	~~~~~~~~~~~~~~~~~~~~~~~~~~\hat Z_h(m,n)= \{\, \hat z_h \in C^0(\overline{Q}_T) : \hat z_h|_K \in  (\mathbb{P}_{m,\xvec} \otimes \mathbb{P}_{n,t})(K) \ \ \forall K \in  {\mathcal Q}_{h} \,\},
$$
$$
	\hat P_h(m,n) = \{\, \hat z_h \in\hat  Z_h(m,n) : \hat z_h = 0 \ \hbox{ on } \ \Sigma_T \,\}
$$
	and
$$
	~~~~~~\hat \Lambda_h(m,n) = \{\, \hat z_h \in\hat  P_h(m,n) : \hat z_h(x,T) = 0 \ \hbox{ in } \ \Om \,\}.
$$
	Here,  $\mathbb{P}_{\ell,\xi}$ denotes the space of polynomial functions of order $\ell$ in the variable $\xi$.
	
	Then, $\hat Z_h(m,n)$, $\hat P_h(m,n)$  and $\hat \Lambda_h(m,n)$ are finite dimensional subspaces of 
	$\hat Z$, $\hat P$ and $\hat \Lambda$, respectively.
	Therefore, for any $m,n,m',n',m'',n'' \geq 1$, we can define the product space
$$
	\hat W_h = \hat W_h(m,n,m',n',m'',n'') :=\hat  Z_h(m,n)\times \hat  P_h(m',n') \times \hat \Lambda_h(m'',n'')
$$
	and the following mixed approximation to~\eqref{m_hhh} makes sense:
\begin{equation}\label{m-h_h}
	\left\{
		\begin{array}{l}
			\dis\hat{\alpha}((\hat z_h,\hat p_h),(\hat z'_h,\hat p'_h)) + \hat{\beta}((\hat z'_h,\hat p'_h),\hat \lambda_h) 
			= \langle \hat{\ell},(\hat z'_h,\hat p'_h) \rangle, \\ 
			\noalign{\smallskip}
			\dis \hat{\beta}((\hat z_h,\hat p_h),\hat \lambda'_h) = 0,\\ 
			\noalign{\smallskip}
			\dis \qquad \forall (\hat z'_h, \hat p'_h,\hat \lambda'_h) \in \hat W_h; \ 
			(\hat z_h,\hat p_h,\hat \lambda_h) \in \hat W_h.
		\end{array}
	\right.
\end{equation}

	Let $n_h=\dim\hat  Z_h(m,n)\times \hat  P_h(m',n'),\, m_h=\dim\hat  \Lambda_h(m'',n'') $ and let the real matrices 
	$\hat A_h\in \mathbb{R}^{n_h,n_h}$, $\hat B_h\in \mathbb{R}^{m_h,n_h}$
	and the vector $\hat L_h\in \mathbb{R}^{n_h}$ be defined by   
\[
	\left\{
		\begin{aligned}
			&\hat \alpha((\hat z_h,\hat p_h),(\hat z'_h,\hat p'_h))=
			 \langle\hat A_h \{(\hat z_h,\hat p_h)\},\{(\hat z'_h,\hat p'_h)\}\rangle_{n_h}
			\quad \forall  (\hat z_h,\hat p_h),(\hat z'_h,\hat p'_h)\in\hat  Z_h(m,n)\times \hat  P_h(m',n'), \\
			&\hat \beta((\hat z_h,\hat p_h),\hat \lambda_h)= 
			\langle\hat B_h \{(\hat z_h,\hat p_h)\}, \{\hat \lambda_h\}\rangle_{m_h}
			\quad \forall (\hat z_h,\hat p_h)\in\hat Z_h(m,n)\times \hat  P_h(m',n'), \quad \forall \hat \lambda_h\in \hat \Lambda_h,\\
			&\hat\ell(\hat z_h,\hat p_h)=\langle\hat L_h,\{\hat z_h,\hat p_h\}\rangle _{n_h} 
			\quad \forall (\hat z_h,\hat p_h)\in\hat  Z_h(m,n)\times \hat  P_h(m',n'),
		\end{aligned}
	\right.
\]
	where $\{(\hat z_h,\hat p_h)\}$, $\{(\hat z_h',\hat p_h')\}$, etc. denote the vectors associated to the functions 
	$(\hat z_h,\hat p_h)$, etc. and $\langle\cdot,\cdot\rangle_{n_h}$ is the usual scalar product in~$\mathbb{R}^{n_h}$. 
	With this notation, the problem \eqref{m-h_h} reads as follows: find 
	$ \{(\hat z_h,\hat p_h)\}\in \mathbb{R}^{n_h}$ and $\{\hat \lambda_h\}\in \mathbb{R}^{m_h}$ such that 

\begin{equation} \label{matrixmfh}
	\left(
		\begin{array}{cc}
			\hat A_h & \hat  B_h^T \\
			\hat B_h & 0   
		\end{array}
	\right)
	\left(
		\begin{array}{c}
			\{(\hat z_h,\hat p_h)\}   \\
			\{\hat \lambda_h\}    
		\end{array}
	\right)  =
	\left(
		\begin{array}{c}
			\hat L_h \\
			0   
		\end{array}
	\right).
\end{equation}
	
	The matrix $\hat A_h$ is symmetric and positive semidefinite but not positive definite for any $h>0$. 
	Indeed, one has 
$$
	\langle\hat A_h \{(\hat z_h,\hat p_h)\},\{(\hat z'_h,\hat p'_h)\}\rangle_{n_h}=\iint_{Q_T}
	(|\hat z_h|^2+|\hat p_h|^21_\om)\,dx\,dt
$$
	for all $(\hat z_h,\hat p_h)\in \hat{Z}_h(m,n)\times \hat{P}_h(m',n')$, whence this quantity is zero if
	$\hat p_h=0$ in $q_T$. The matrix of order 
	$m_h+n_h$ in \eqref{matrixmfh} is symmetric but it is unknown if it is singular or not. However, since our main interest
	is to obtain a numerical solution to \eqref{m_hhh}, we will apply a ``reasonable" method to \eqref{matrixmfh}
	with the hope to get good results. In view of the previous assertions, it seems appropriate  to use 
	 an iterative algorithm like for instance the 
	{\it Arrow--Hurwicz method} (for completeness, we will describe this method in the following Section).


\subsection{The Arrow-Hurwicz algorithm}\label{Num-H-2M}

	As already mentioned, it seems convenient
	to solve \eqref{matrixmfh} using an iterative method. Among other possibilities, we have checked that a good choice is the
	so called {\it Arrow-Hurwicz algorithm}. It is the following:
	
\vspace{0.3cm}
\noindent\textbf{ALG (Arrow-Hurwicz):}
\begin{enumerate}
	\item [(i)]  {\it Initialize}\\
		Fix $r,s > 0$. Let  $(\hat z^{(0)}_h,\hat p^{(0)}_h,\hat \lambda^{(0)}_h)$ 
		be arbitrarily chosen in $\hat W_h$.		
		Take, $(\hat z^{(0)}_h,\hat p^{(0)}_h) = (0,0)$ and $\hat \lambda^{(0)}_h = 0$.
		
	 	For $k \geq 0$, assume that $(\hat z^{(k)}_h,\hat p^{(k)}_h)$ and $\hat \lambda^{(k)}_h$ are known. Then: 		
	\item [(ii)] {\it Advance for $(\hat z_h,\hat p_h)$}: Let $(\hat z^{(k+1)}_h,\hat p^{(k+1)}_h)$ be defined by
		$$
			(\hat z^{(k+1)}_h,\hat p^{(k+1)}_h)= (\hat z^{(k)}_h,\hat p^{(k)}_h) 
			- r\left[\,\hat A_h(\hat z^{(k)}_h,\hat p^{(k)}_h)-\hat L_h+\hat B^T_h\hat \lambda^{(k)}_h\right].
		$$
	\item [(iii)] {\it Advance for $\hat \lambda_h$}: Let $\hat \lambda^{(k+1)}_h$ be defined by
		$$
			\hat \lambda^{(k+1)}_h = \hat \lambda^{(k)}_h + rs\hat  B_h(\hat z^{(k+1)}_h,\hat p^{(k+1)}_h).
		$$
	
	Check convergence. If the stopping test is not satisfied, replace $k$ by $k + 1$ and return to step (ii).
\end{enumerate}
\begin{remark}
	{\rm
	The best choice of the parameters $r$~and~$s$~is determined by the smallest and greatest eigenvalues 
	associated to some operators involving the matrix $\hat A_h$ and $\hat B_h$; see for example $\cite{ciarlet_lions,queck,rob-tho}$.
	The main advantage of $\textbf{ ALG 1}$ with respect to other (iterative or not) algorithms is that we do not have to 
	invert in practice any matrix. In the present context, everything works even if $\hat A_h$ is (as we have already said)
	positive semidefinite but not positive definite. The drawback is that we have to find
	good values of $r$ and $s$ and, obviously, this needs some extra work. \Fin
	}
\end{remark}


\subsection{A numerical experiment}\label{Num-H-2M-2}

	We present now some numerical results. From $(\hat z_h,\hat p_h)$, we obtain an approximation of the control by setting 
	$v_h=-\rho_0^{-1}\hat p_h\,1_\om$. The corresponding controlled state $y_h$ can be computed by solving the equation in~\eqref{heat}
	with standard techniques, for instance using the {\it Crank-Nicolson method}. Since the state is directly given 
	by $\rho^{-1}\hat z$, we simply take $y_h = \rho^{-1}\hat z_h$.

	We present in this Section an experiment concerning the numerical solution of \eqref{m_hhh}. The computations have 
	been performed with {\it Freefem++}, see \cite{hecht}. We have used $P_2$-Lagrange finite elements in $(\xvec,t)$ for 
	all the variables $\hat p$, $\hat z$ and $\hat \lambda$. We have taken $\Omega=(0,L_1)\times(0,L_2)$, with 
	$L_1=L_2=1$. For any $(a,b)\in\Om$, we have considered the function $\betamod_0^{(a,b)}$, where\vspace{-0.4cm}
\[
	\betamod_0^{(a,b)}(x_1,x_2)={x_1(L_1-x_1)x_2(L_2-x_2)e^{-[(x_1-c_a)^2+(x_2-c_b)^2]}\over
					      a(L_1-a)b(L_2-b) e^{-[(a-c_a)^2+(b-c_b)^2]}},
\]
\vspace{-0.3cm}
$$
	 c_a=a-{L_1-2a\over2a(L_1-a)},\,~ c_b=b-{L_2-2b\over2b(L_2-b)}.
$$
	Then, if $(a,b)$ belongs to $\om$, the function $\betamod_0^{(a,b)}$ satisfies the conditions in \eqref{weights-0}.
	We have taken $T=1$, $\omega=(0.2,0.6)\times(0.2,0.6)$, $G(\xvec,t):\equiv1$, $K_1=1$, $K_2=2$, $(a,b)=(0.5,0.5)$ 
	and $y_0(\xvec)\equiv1000$. In view of the regularizing effect of the heat equation, the lack of compatibility of the 
	initial and boundary data does not have serious consequences.
	Indeed, it is seen below that the boundary conditions are satisfied as soon as~$t > 0$.
	
	The computational domain and the mesh are shown in Fig.~\ref{mesh_heat_lin}. With these data, the behavior of the 
	Arrow-Hurwicz algorithm is depicted in Table~\ref{Tab-AH-H}, where the first and second relative errors are respectively 
	given by
$$
	{ \|(\hat z^{(k+1)}_h,\hat p^{(k+1)}_h) - (\hat z^{(k)}_h,\hat p^{(k)}_h)\|_{L^2(Q_T)} \over \|(\hat z^{(k+1)}_h,\hat p^{(k+1)}_h)\|_{L^2(Q_T)} }
$$
	and
$$
	{ \| \hat\lambda^{(k+1)}_h - \hat \lambda^{(k)}_h \|_{L^2(Q_T)} \over \| \hat\lambda^{(k+1)}_h \|_{L^2(Q_T)} }\,.
$$
	
	Some illustrative views of the numerical approximations of the control and the state can be found in 
	Fig.~\ref{control_state_heat0_lin_1}-\ref{control_state_heat0_lin_3}.
	

\begin{figure}[http]
\begin{center}
\begin{minipage}{0.5\textwidth}
\includegraphics[width=\textwidth]{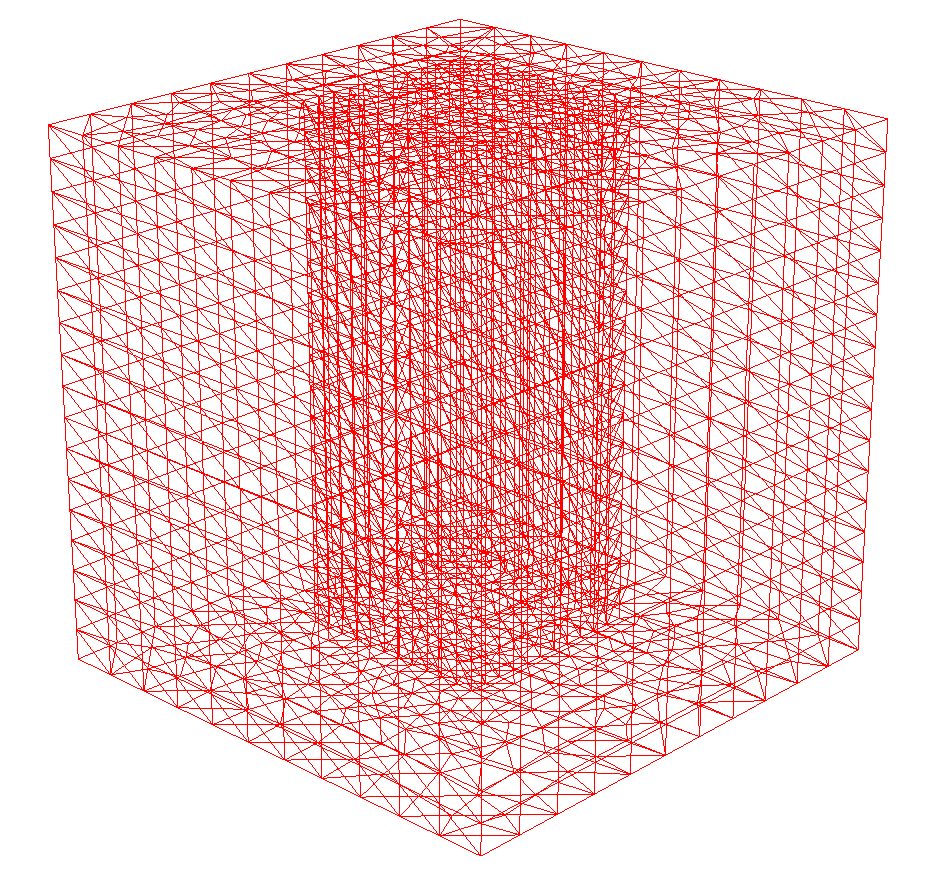}
\end{minipage}
\caption{The domain and the mesh. Number of vertices: 2\ 800. Number of elements (tetrahedra): 14\ 094.
Total number of variables: 20\ 539.}
\label{mesh_heat_lin}
\end{center}
\end{figure}


\begin{figure}[http]
\begin{center}
\begin{minipage}{0.49\textwidth}
\includegraphics[width=\textwidth]{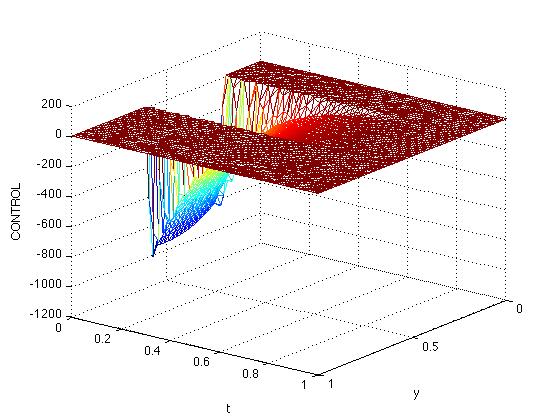}
\includegraphics[width=\textwidth]{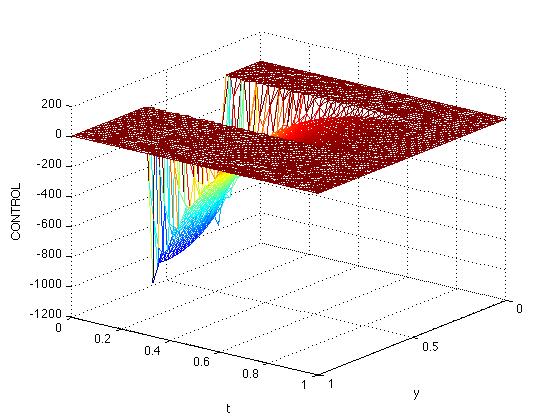}
\end{minipage}
\begin{minipage}{0.49\textwidth}
\includegraphics[width=\textwidth]{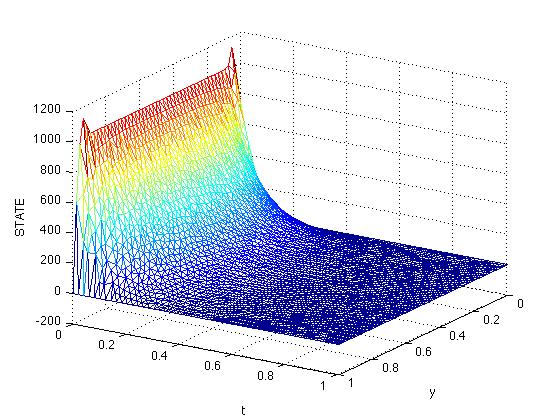}
\includegraphics[width=\textwidth]{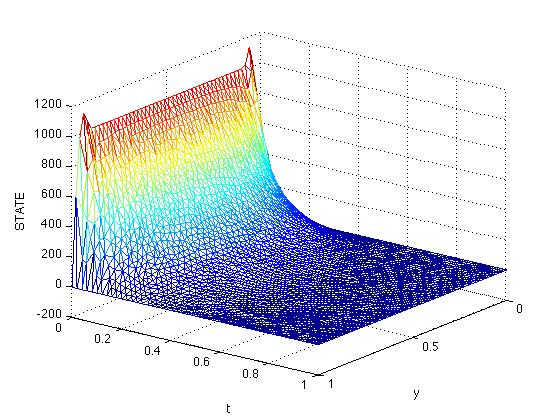}
\end{minipage}
\caption{$\omega=(0.2,0.6)$; $y_0(\xvec)=1000$. Cuts at $x_1=0.28$ and~$x_1=0.52$ of the control $v_h$ ({\bf Left}) and the state ({\bf Right}).}
\label{control_state_heat0_lin_1}
\end{center}
\end{figure}


\begin{figure}[http]
\begin{center}
\begin{minipage}{0.49\textwidth}
\includegraphics[width=\textwidth]{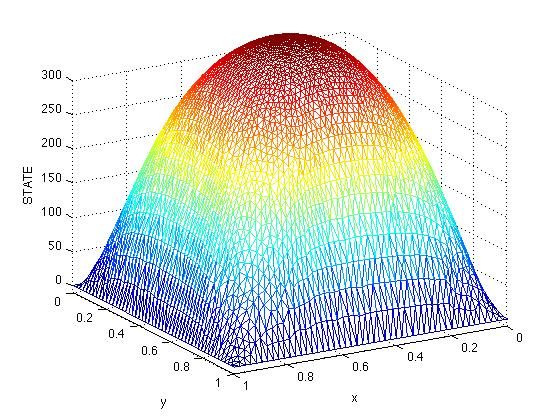}
\includegraphics[width=\textwidth]{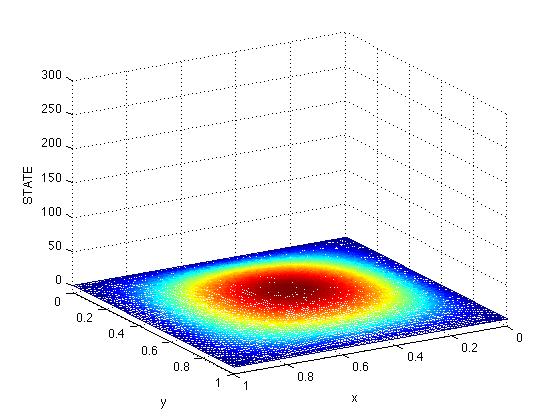}
\end{minipage}
\begin{minipage}{0.49\textwidth}
\includegraphics[width=\textwidth]{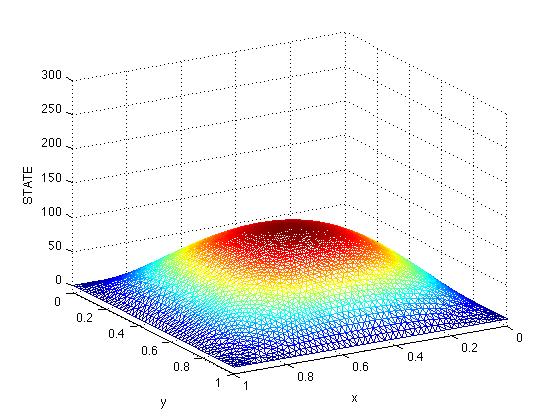}
\includegraphics[width=\textwidth]{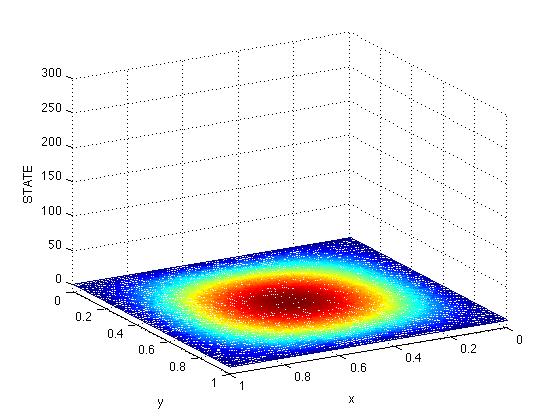}
\end{minipage}
\caption{Evolution of the state: $t=0.2$ and $t=0.6$ ({\bf Left}), $t=0.4$ and $t=0.8$ ({\bf Right}).}
\label{control_state_heat0_lin_2}
\end{center}
\end{figure}


\begin{figure}[http]
\begin{center}
\begin{minipage}{0.6\textwidth}
\includegraphics[width=\textwidth]{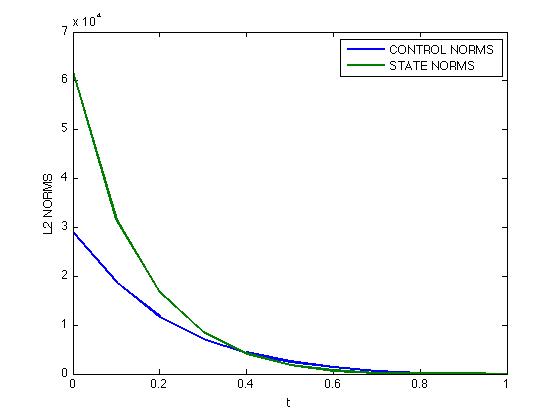}
\end{minipage}
\caption{Evolution of the $L^2$ norms of the control and the state.}
\label{control_state_heat0_lin_3}
\end{center}
\end{figure}

 
\begin{table}[http]
\centering
\begin{tabular}{|c|c|c|}
\hline
Iterate  & Rel.\ error 1 & Rel.\ error 2 \tabularnewline
\hline 
1             & $0.225333$      &  $1.000000$  \tabularnewline\hline 
10           & $0.019314$      &  $0.236549$  \tabularnewline\hline 
20           & $0.010068$      &  $0.065812$  \tabularnewline\hline 
30           & $0.005633$      &  $0.046212$  \tabularnewline\hline 
40           & $0.000358$      &  $0.003397$  \tabularnewline\hline 
50           & $0.000117$      &  $0.001357$  \tabularnewline\hline 
\hline
\end{tabular}
\caption{The behavior of ALG $1$ for \eqref{m_hhh}.}\label{Tab-AH-H}
\end{table}


\section{A strategy for the computation of null controls for the Stokes equations}\label{sec_stokes}
	
   	In this Section, we will present a formulation of the null controllability problem for \eqref{stokes} inspired by the same ideas
	(again, Fursikov-Imanunuvilov's formulation). Specifically, we will try to solve numerically the problem
\begin{equation}\label{variational_F-I}
	\left\{
		\begin{array}{llr}
			\dis \hbox{Minimize } \ J(\yvec,\vvec)=  {1\over2}\iint_{Q_T}  \rho^2|\yvec|^2\,d\xvec\,dt 
			+ {1\over2}\iint_{q_T}\rho_0^2|\vvec|^2 \,d\xvec\,dt\\
			\noalign{\smallskip}
			\dis \hbox{Subject to } \ (\yvec,\vvec)\in \mathcal{S}(\yvec_0,T),
		\end{array}
	\right.
\end{equation}
   	where $\yvec_0 \in \Hvec$, $T > 0$, the linear manifold $\dis\mathcal{S}(\yvec_0,T)$ is given by
\[
	\dis\mathcal{S}(\yvec_0,T) = \left\{\, (\yvec,\vvec): \vvec\in \Lvec^2(\omega\times(0,T)),~(\yvec,\vvec)
	~\hbox{satisfies \eqref{stokes} for some}~\pi ~\hbox{and fulfills}~\eqref{null_condition_stokes} \,\right\}
\]
	and it is again assumed that the weights $\rho$ and $\rho_0$ satisfy~\eqref{weights-0}.

   	We have:
\begin{theorem}\label{th-2_s}
   	For any $\yvec_0 \in \Hvec$ and $T > 0$, there exists exactly one solution to~\eqref{variational_F-I}.
\end{theorem}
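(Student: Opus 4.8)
The plan is to mimic exactly the argument that establishes Theorem~\ref{th-3_h} for the heat equation, replacing the scalar Carleman inequality of Proposition~\ref{prop-Carleman_h} by its analogue for the Stokes system. First I would observe that, since $\yvec_0\in\Hvec$, the admissible class $\mathcal S(\yvec_0,T)$ is nonempty: this is precisely the content of Theorem~\ref{th-2}, which guarantees the existence of at least one control $\vvec\in\Lvec^2(q_T)$ driving the solution of \eqref{stokes} to rest at time $T$. Moreover $\mathcal S(\yvec_0,T)$ is a closed affine subspace (a translate of $\mathcal S(\0vec,T)$, which is a linear subspace) of $L^2(\rho;Q_T)\times L^2(\rho_0;q_T)$ once one checks, via the Carleman estimate, that the finiteness of $J$ forces $\yvec\in L^2(\rho;Q_T)$ and $\vvec\in L^2(\rho_0;q_T)$. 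The functional $J$ is strictly convex, continuous and coercive on this class with respect to the weighted $L^2$ norms, so a standard argument in the calculus of variations yields existence and uniqueness of a minimizer.

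The key analytical ingredient is the appropriate Carleman inequality for the Stokes system: there is a constant $C_0=C_0(\Om,\om,T)$ such that, for all smooth divergence-free $\psivec$ vanishing on $\Sigma_T$,
\begin{equation*}
	\iint_{Q_T}\!\!\left[\rho_2^{-2}\bigl(|\psivec_t|^2+|\Delta\psivec|^2\bigr)+\rho_1^{-2}|\nabla\psivec|^2+\rho_0^{-2}|\psivec|^2\right]d\xvec\,dt
	\le C_0\iint_{Q_T}\!\!\bigl(\rho^{-2}|L^*\psivec|^2+\rho_0^{-2}|\psivec|^2 1_\om\bigr)d\xvec\,dt,
\end{equation*}
where $L^*\psivec:=-\psivec_t-\nu\Delta\psivec$ is understood modulo gradients (equivalently, after projection onto $\Hvec$). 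Such an estimate is classical and is available in \cite{FursikovImanuvilov}; I would simply invoke it. From it one obtains, exactly as in the heat case, a well-defined scalar product $k(\cdot,\cdot)$ on the space of smooth adjoint states, a Hilbert space $\Pvec$ by completion, continuity of the trace $\psivec\mapsto\psivec(\cdot,0)$ into $\Hvec$, and hence — by Lax--Milgram applied to the bilinear form $k$ and the continuous linear form $\langle\ell_0,\psivec'\rangle=\int_\Om\yvec_0\cdot\psivec'(\cdot,0)$ — a unique adjoint state $\psivec$. Setting $\yvec:=\rho^{-2}L^*\psivec$ and $\vvec:=-\rho_0^{-2}\psivec|_{q_T}$ produces an admissible pair, and the variational identity together with convexity shows it is the minimizer; conversely any minimizer must arise this way through its Euler--Lagrange equation.

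The main obstacle — and the only place the argument is genuinely different from the heat equation — is the pressure. In \eqref{stokes} the adjoint system carries an adjoint pressure, and the Carleman inequality above must be stated and used in a pressure-free (projected) form, or else with a companion estimate on the pressure gradient; one has to be careful that the quantity $\rho^{-2}L^*\psivec$ defining $\yvec$ is divergence-free and that the weighted integrals are insensitive to the gradient part. This is exactly the technical point handled in \cite{FursikovImanuvilov}, so I would lean on that reference for the Stokes Carleman estimate and devote the proof here only to the functional-analytic wrapping (nonemptiness of $\mathcal S$, closedness, strict convexity, coercivity), which is formally identical to the proof of Theorem~\ref{th-3_h}.
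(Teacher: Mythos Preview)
Your overall strategy --- invoke a Carleman inequality, build a scalar product on adjoint states, complete to a Hilbert space, apply Lax--Milgram, and read off the minimizer --- is exactly the route the paper follows. The substantive difference is in how the pressure is handled. The paper does \emph{not} project it out or work ``modulo gradients'': it states the Carleman inequality (Proposition~\ref{prop-Carleman-Stokes}) for \emph{pairs} $(\pvec,\sigma)\in\Phivecc_0$ with $\nabla\cdot\pvec\equiv0$, with the full source term $|\Lvec^*\pvec+\nabla\sigma|^2$ on the right-hand side; the scalar product $\mvec(\cdot,\cdot)$ in~\eqref{defm} and the completed space $\Phivecc$ are spaces of pairs, and the state is recovered as $\yvec=\rho^{-2}(\Lvec^*\pvec+\nabla\sigma)$ (Theorem~\ref{th-optimality}). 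Keeping $\sigma$ explicit is precisely what makes $\nabla\cdot\yvec=0$ hold (this is one of the equations in~\eqref{eq3}); your formula $\yvec=\rho^{-2}L^*\psivec$ with a ``projected'' $L^*$ does not obviously yield a divergence-free field once multiplied by the non-constant weight $\rho^{-2}$, and you flag this as the main obstacle without resolving it. The paper's choice to carry $\sigma$ along is not a cosmetic difference but the device that closes this gap cleanly.

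There is also a genuine gap in your first paragraph. Theorem~\ref{th-2} furnishes a null control $\vvec\in\Lvec^2(q_T)$, but it does \emph{not} produce an admissible pair with $J(\yvec,\vvec)<+\infty$: the weights $\rho$ and $\rho_0$ blow up as $t\to T$, so finiteness of the weighted integrals is a strictly stronger requirement than mere null controllability. Thus nonemptiness of $\mathcal S(\yvec_0,T)$ \emph{in the weighted topology} is not a consequence of Theorem~\ref{th-2}, and you cannot launch a direct-method argument from it. In the Fursikov--Imanuvilov scheme this order is reversed: one first solves the Lax--Milgram problem~\eqref{pb-psigma} in $\Phivecc$ (well-posed thanks to~\eqref{cont-C0}), \emph{defines} $(\yvec,\vvec)$ by~\eqref{optimal-yv-good}, checks from the variational identity that this pair satisfies~\eqref{stokes}--\eqref{null_condition_stokes} with finite cost, and only then verifies optimality via convexity. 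Your sketch treats as a preliminary something that is actually the output of the construction.
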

	
	Again, this result can be viewed as a consequence of a {\it Carleman inequality}. Thus,
	let us set  
\[
	\Lvec\yvec := \yvec_t -\nu\Delta \yvec, \quad \Lvec^*\pvec := -\pvec_t -\nu\Delta \pvec
\]
	and let us introduce the space
\begin{equation}\label{defPhi0}
	\dis \Phivecc_0 = \{\, (\pvec,\sigma) : p_i, \sigma \in C^2(\overline{Q}_T), \ \nabla\cdot\pvec \equiv0,
	\dis \ p_i = 0 \ \hbox{on} \ \Sigma_T, \ \int_\Om \!\sigma(\xvec,t) d\xvec\!=\! 0 \,\ \forall t \,\}.
\end{equation}	
	Then, one has the following (see \cite{FursikovImanuvilov,IMANU}):
\begin{proposition}\label{prop-Carleman-Stokes} 
	The function $\betamod_0$ and the associated weights  $\rho$, $\rho_0$, $\rho_1$ and $\rho_2$ can be chosen such that there exists $C$, only depending on $\Om$, $\om$ 
	and $T$,  with the following property:
\begin{equation}\label{Carleman-ineq_stokes}
	\begin{array}{c}
		\dis \iint_{Q_T}  \left[ \rho_2^{-2}(|\pvec_t|^2 + |\Delta\pvec|^2) + \rho_1^{-2} |\nabla\pvec|^2 + \rho_0^{-2} |\pvec|^2 
		+ \rho^{-2} |\nabla\sigma|^2 \right]d\xvec\,dt\\ 
		\noalign{\smallskip}
		\dis \leq C \left( \iint_{Q_T} \rho^{-2} |\Lvec^*\pvec + \nabla\sigma|^2\,d\xvec\,dt
		+ \iint_{q_T} \rho_0^{-2} |\pvec|^2 \,d\xvec\,dt\right)
	\end{array}
\end{equation}
	for all $(\pvec,\sigma)\in \Phivecc_0$.
\end{proposition}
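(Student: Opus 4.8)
The plan is to derive \eqref{Carleman-ineq_stokes} from the scalar heat Carleman estimate \eqref{Carleman-ineq_h}, applied componentwise to $\pvec$ (with diffusion $\nu$ and no zero-order term, which only changes the constant), supplemented by a separate estimate for the pressure gradient $\nabla\sigma$. Write $\fvec := \Lvec^*\pvec + \nabla\sigma$ for the source, so that each component $p_i$ of $\pvec$ solves the backward scalar heat equation $-\partial_t p_i - \nu\Delta p_i = f_i - \partial_i\sigma$ in $Q_T$ with $p_i = 0$ on $\Sigma_T$. Applying the analogue of \eqref{Carleman-ineq_h} to each $p_i$ and adding, the part of the left-hand side of \eqref{Carleman-ineq_stokes} involving $\pvec$ is bounded by $C\iint_{Q_T}\rho^{-2}\bigl(|\fvec|^2 + |\nabla\sigma|^2\bigr)\,d\xvec\,dt + C\iint_{q_T}\rho_0^{-2}|\pvec|^2\,d\xvec\,dt$. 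Thus everything reduces to estimating $\iint_{Q_T}\rho^{-2}|\nabla\sigma|^2$ by the right-hand side of \eqref{Carleman-ineq_stokes} plus a small multiple of its left-hand side, which can then be absorbed.

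This pressure estimate is the technical heart of the proof. Taking the divergence of $\Lvec^*\pvec + \nabla\sigma = \fvec$ and using $\nabla\cdot\pvec\equiv\0vec$ (so that $\nabla\cdot\Lvec^*\pvec\equiv\0vec$) shows that, for a.e.\ $t$, $\sigma(\cdot,t)$ solves the Neumann problem $-\Delta\sigma = -\nabla\cdot\fvec$ in $\Om$, $\partial_n\sigma = (\fvec+\nu\Delta\pvec)\cdot\nvec$ on $\Gamma$ (here one uses $\pvec|_{\Sigma_T}=0$, hence $\pvec_t|_{\Sigma_T}=0$), together with $\int_\Om\sigma\,d\xvec=0$. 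The idea is to run a spatial (elliptic) Carleman/regularity argument for this problem with the weight $\rho^{-2}$ frozen in $t$: elliptic regularity controls $\|\nabla\sigma(\cdot,t)\|$ in terms of $\|\fvec(\cdot,t)\|$ and the boundary data involving $\Delta\pvec$, the latter being reabsorbed by the $\rho_2^{-2}|\Delta\pvec|^2$ term already present on the left, while any local term produced in the argument is reabsorbed thanks to the hypothesis $|\nabla\betamod_0|>0$ in $\overline\Om\setminus\om$, which is precisely what makes an estimate with observation only on $\om$ possible.

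The genuinely delicate point — and the one I expect to be the main obstacle — is that this absorption must be carried out uniformly in $t$ up to $t=T$: since $\rho_1$ and $\rho_2$ differ from $\rho$ by powers of $(T-t)$, after integrating in time one must check that the pressure contribution and the local gradient terms are dominated, with an arbitrarily small constant, by $\iint_{Q_T}\bigl[\rho_2^{-2}|\Delta\pvec|^2 + \rho_1^{-2}|\nabla\pvec|^2 + \rho_0^{-2}|\pvec|^2\bigr]\,d\xvec\,dt$. This is exactly why $\betamod_0$ must be chosen with the stated sign and gradient properties and $K_1,K_2$ sufficiently large (depending on $T$), as the statement allows. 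An alternative, essentially equivalent route — the one of \cite{IMANU} — is to eliminate the pressure outright by applying the scalar curl: $w:=\nabla\times\pvec$ satisfies $-w_t-\nu\Delta w = \nabla\times\fvec$ with no pressure; one then Carleman-estimates $w$, recovers $\pvec$ from the elliptic div–curl system $\nabla\times\pvec=w$, $\nabla\cdot\pvec=0$, $\pvec|_{\Sigma_T}=\0vec$, converts the local observation of $w$ into one of $\pvec$ by a cut-off and integration by parts, and uses that for the homogeneous adjoint system relevant to Theorem~\ref{th-2} one has $\fvec\equiv\0vec$, so the extra derivative on the source causes no loss. Either way, the final step is routine bookkeeping: collect the componentwise heat estimate together with the $\nabla\sigma$ estimate (or the div–curl reconstruction), absorb the lower-order and local terms, and note that all constants depend only on $\Om$, $\om$ and $T$. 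For the full details we refer to \cite{FursikovImanuvilov,IMANU}.
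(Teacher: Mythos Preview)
The paper itself gives no proof of this proposition; it simply writes ``see~\cite{FursikovImanuvilov,IMANU}'' and moves on. Your final sentence does the same, so at that level your proposal matches the paper. The sketch you give beforehand, however, contains a concrete gap in each of the two routes.

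\medskip

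\textbf{Route 1 (componentwise scalar Carleman plus a pressure estimate).} The absorption you describe cannot work as stated. From the identity $\nabla\sigma = \fvec + \pvec_t + \nu\Delta\pvec$ (or from the Neumann problem you write down, whose boundary datum involves $\Delta\pvec\cdot\nvec$), any bound on $\iint_{Q_T}\rho^{-2}|\nabla\sigma|^2$ will produce a term of the type $\iint_{Q_T}\rho^{-2}\bigl(|\pvec_t|^2+|\Delta\pvec|^2\bigr)$. You want to absorb this into the left-hand side, which only carries $\rho_2^{-2}\bigl(|\pvec_t|^2+|\Delta\pvec|^2\bigr)$. But $\rho_2^{-2}=(T-t)\,\rho^{-2}$, so $\rho^{-2}=(T-t)^{-1}\rho_2^{-2}$ is \emph{larger} than $\rho_2^{-2}$ near $t=T$, and no choice of $K_1,K_2$ or $\betamod_0$ changes the ratio $(T-t)^{-1}$. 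Your claim that ``$K_1,K_2$ sufficiently large'' resolves the uniformity in $t$ is therefore incorrect: the obstruction is purely in the powers of $(T-t)$, not in the exponential part of the weight.

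\medskip

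\textbf{Route 2 (curl / vorticity).} This is the standard strategy and is indeed the one carried out in~\cite{FursikovImanuvilov,IMANU}. But your resolution of the derivative-loss issue --- ``for the homogeneous adjoint system relevant to Theorem~\ref{th-2} one has $\fvec\equiv\0vec$, so the extra derivative on the source causes no loss'' --- does not prove Proposition~\ref{prop-Carleman-Stokes}, which is stated for \emph{arbitrary} $(\pvec,\sigma)\in\Phivecc_0$, i.e.\ for arbitrary $\fvec=\Lvec^*\pvec+\nabla\sigma$. The full estimate with $\rho^{-2}|\fvec|^2$ (and not $\rho^{-2}|\nabla\fvec|^2$) on the right is needed later to characterize $\Phivecc$ and to get~\eqref{cont-C0}. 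In the actual proofs this is obtained not by assuming $\fvec=\0vec$ but by integrating the source term by parts \emph{before} applying Cauchy--Schwarz, so that the curl falls on the Carleman weight and on the vorticity rather than on $\fvec$; the resulting terms are then absorbed thanks to the extra powers of the Carleman parameter carried by the lower-order left-hand-side terms. That step is the genuine content of the references you cite, and it is missing from your sketch.
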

	
   	Let us introduce the bilinear form
\begin{equation}\label{defm}
	\mvec((\pvec,\sigma),(\pvec',\sigma')) := \iint_{Q_T }\left[\rho^{-2}(\Lvec^*\pvec+\nabla\sigma)
	\cdot(\Lvec^*\pvec'+\nabla\sigma') + 1_\omega \rho_0^{-2}\pvec\cdot\pvec' \right]d\xvec\,dt.
\end{equation}
	In view of the unique continuation property of the Stokes system, $\mvec(\cdot\,,\cdot)$ is a scalar product in~$\Phivecc_0$: 
	if $(\pvec,\sigma)\in \Phivecc_0$, $\Lvec^*\pvec + \nabla\sigma = \0vec$ in~$Q_T$ and $\pvec = \0vec$ in~$q_T$, then 
	we have $\pvec \equiv \0vec$ and $\sigma \equiv 0$ (note that, in fact, under these circumstances, $\pvec(\cdot\,,t)$ 
	and $\sigma(\cdot\,,t)$ are analytic for all $t$).

	Let $\Phivecc$ be the completion of $\Phivecc_0$ with respect to this scalar product. As before, $\Phivecc$ is a Hilbert space, 
	the functions $(\pvec,\sigma) \in \Phivecc$ satisfy
\begin{equation}\label{finite-rhs}
	\iint_{Q_T} \rho^{-2} |\Lvec^*\pvec + \nabla\sigma|^2 \,d\xvec\,dt+ \iint_{q_T} \rho_0^{-2} |\pvec|^2\,d\xvec\,dt < +\infty
\end{equation}
	and, from Proposition~\ref{prop-Carleman-Stokes} and a density argument, we also have \eqref{finite-rhs} for any $(\pvec,\sigma) \in \Phivecc$.
   	
	We also see from Proposition~\ref{prop-Carleman-Stokes} that
\begin{equation}\label{defPhi}
	\begin{array}{c}
		\dis \Phivecc = \{\, (\pvec,\sigma) : p_i, \sigma, \partial_tp_i, \partial_{x_j} p_i, 
		\partial_{x_j}\sigma, \partial_{x_jx_k} p_i 
		\in L^2(0,T-\delta;L^2(\Om)) \ \forall \delta > 0,												    \\ 
		\noalign{\smallskip}
		\dis \eqref{finite-rhs}\ \hbox{holds},\ \nabla\cdot\pvec \equiv 0\ \hbox{in} \ Q_T, \ p_i = 0 
		\ \hbox{on} \ \Sigma_T, \ \int_\Om \sigma(\xvec,t)\, d\xvec = 0 \,\ \forall t \,\}
	\end{array}
\end{equation}
	and, in particular, any $(\pvec,\sigma) \in \Phivecc$ satisfies $\pvec \in C^0([0,T-\delta];\Vvec)$ for all $\delta > 0$ and
\begin{equation}\label{cont-C0}
	\|\pvec(\cdot\,,0) \|_{\Vvec} \leq C \, \mvec((\pvec,\sigma),(\pvec,\sigma))^{1/2} \quad \forall (\pvec,\sigma) \in \Phivecc.
\end{equation}
	
	The following result is proved in~\cite{FursikovImanuvilov}:
\begin{theorem}\label{th-optimality}
	Let the weights $\rho$ and $\rho_0$ be chosen as in Proposition~\ref{prop-Carleman-Stokes}. Let $(\yvec,\vvec)$ be the 
	unique solution to~\eqref{variational_F-I}. Then one has  
\begin{equation}\label{optimal-yv-good}
	\yvec = \rho^{-2} (L^*\pvec + \nabla \sigma), \quad \vvec = -\rho_0^{-2}\bigl.\pvec\bigr|_{\om \times (0,T)},
\end{equation}
	where $(\pvec,\sigma)$ is the solution to the following variational equality in $\Phivecc$:
\begin{equation}\label{pb-psigma}
	\left\{
		\begin{array}{l}
			\dis \iint_{Q_T}\left(\rho^{-2}(\Lvec^*\pvec+\nabla\sigma)\cdot(\Lvec^*\pvec'+\nabla\sigma') 
			+\rho_0^{-2}\pvec\cdot\pvec'1_\om \right)d\xvec\,dt
			=\dis\int_\Om\yvec_0(\xvec)\cdot\pvec'(\xvec,0)\,d\xvec 								\\
			\noalign{\smallskip}
			\dis \forall (\pvec',\sigma') \in \Phivecc; \ (\pvec,\sigma) \in \Phivecc.
		\end{array}
	\right.
\end{equation}
\end{theorem}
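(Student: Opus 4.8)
The plan is to follow the Fursikov--Imanuvilov duality argument, in complete analogy with the proof of Theorem~\ref{th-optimality_h} for the heat equation. First I would show that \eqref{pb-psigma} is well posed. By the unique continuation property of the Stokes system recalled after \eqref{defm}, the bilinear form $\mvec(\cdot,\cdot)$ is a scalar product on $\Phivecc_0$, and $\Phivecc$ is its completion, hence a Hilbert space on which $\mvec(\cdot,\cdot)$ is continuous, symmetric and coercive. From Proposition~\ref{prop-Carleman-Stokes} and the density of $\Phivecc_0$ in $\Phivecc$, the Carleman inequality \eqref{Carleman-ineq_stokes} holds for every $(\pvec,\sigma)\in\Phivecc$; together with \eqref{cont-C0} this shows that $(\pvec',\sigma')\mapsto\int_\Om\yvec_0(\xvec)\cdot\pvec'(\xvec,0)\,d\xvec$ is a continuous linear form on $\Phivecc$. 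The Riesz representation theorem then yields a unique $(\pvec,\sigma)\in\Phivecc$ solving \eqref{pb-psigma}.

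Next I would set $\yvec:=\rho^{-2}(\Lvec^*\pvec+\nabla\sigma)$ and $\vvec:=-\rho_0^{-2}\pvec|_{q_T}$ and prove that $(\yvec,\vvec)\in\mathcal{S}(\yvec_0,T)$. By construction $\iint_{Q_T}\rho^2|\yvec|^2\,d\xvec\,dt+\iint_{q_T}\rho_0^2|\vvec|^2\,d\xvec\,dt=\mvec((\pvec,\sigma),(\pvec,\sigma))<+\infty$, and, since $\rho_0^{-1}$ is bounded on $\overline{Q}_T$, $\vvec\in\Lvec^2(q_T)$. Rewriting \eqref{pb-psigma} with these definitions gives $\iint_{Q_T}\yvec\cdot(\Lvec^*\pvec'+\nabla\sigma')\,d\xvec\,dt-\iint_{q_T}\vvec\cdot\pvec'\,d\xvec\,dt=\int_\Om\yvec_0(\xvec)\cdot\pvec'(\xvec,0)\,d\xvec$ for all $(\pvec',\sigma')\in\Phivecc$. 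Taking $\pvec'$ smooth, divergence-free and compactly supported in $Q_T$ and $\sigma'=0$, De Rham's theorem provides a pressure $\pi$ with $\yvec_t-\nu\Delta\yvec+\nabla\pi=\vvec1_\om$ in $Q_T$; taking $\pvec'=\0vec$ and $\sigma'$ variable forces $\nabla\cdot\yvec=0$; and letting the test pair not vanish at $t=0$ and $t=T$ and matching the boundary terms with the right-hand side gives $\yvec(\cdot,0)=\yvec_0$ and $\yvec(\cdot,T)=\0vec$ in $\Om$, the latter being consistent with $\iint_{Q_T}\rho^2|\yvec|^2<+\infty$ and the blow-up of $\rho$ as $t\to T^-$; finally $\yvec|_{\Sigma_T}=\0vec$ follows from $\pvec\in C^0([0,T-\delta];\Vvec)$ (see \eqref{defPhi}) and the definition of $\yvec$. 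Hence $(\yvec,\vvec)$ is admissible.

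Then I would check that $(\yvec,\vvec)$ is the minimizer of \eqref{variational_F-I}. For any $(\overline\yvec,\overline\vvec)\in\mathcal{S}(\yvec_0,T)$, the difference $(\overline\yvec-\yvec,\overline\vvec-\vvec)$ lies in $\mathcal{S}(\0vec,T)$: it solves \eqref{stokes} with zero initial datum, control $\overline\vvec-\vvec$, and attains $\0vec$ at $t=T$. Expanding the quadratic functional,
\begin{align*}
	J(\overline\yvec,\overline\vvec)-J(\yvec,\vvec)&=\iint_{Q_T}\rho^2\yvec\cdot(\overline\yvec-\yvec)\,d\xvec\,dt+\iint_{q_T}\rho_0^2\vvec\cdot(\overline\vvec-\vvec)\,d\xvec\,dt\\
	&\quad+\frac12\iint_{Q_T}\rho^2|\overline\yvec-\yvec|^2\,d\xvec\,dt+\frac12\iint_{q_T}\rho_0^2|\overline\vvec-\vvec|^2\,d\xvec\,dt,
\end{align*}
and, since $\rho^2\yvec=\Lvec^*\pvec+\nabla\sigma$ on $Q_T$ and $\rho_0^2\vvec=-\pvec$ on $q_T$, the linear part equals $\iint_{Q_T}(\Lvec^*\pvec+\nabla\sigma)\cdot(\overline\yvec-\yvec)\,d\xvec\,dt-\iint_{q_T}\pvec\cdot(\overline\vvec-\vvec)\,d\xvec\,dt$. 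Integrating by parts against $(\pvec,\sigma)$ and using $\nabla\cdot\pvec\equiv0$, $\pvec|_{\Sigma_T}=\0vec$ and $\int_\Om\sigma(\cdot,t)\,d\xvec=0$, together with $\nabla\cdot(\overline\yvec-\yvec)\equiv0$, $(\overline\yvec-\yvec)|_{\Sigma_T}=\0vec$ and the vanishing of $\overline\yvec-\yvec$ at $t=0$ and $t=T$, this linear part reduces to $\iint_{q_T}\pvec\cdot(\overline\vvec-\vvec)\,d\xvec\,dt-\iint_{q_T}\pvec\cdot(\overline\vvec-\vvec)\,d\xvec\,dt=0$. Hence $J(\overline\yvec,\overline\vvec)\geq J(\yvec,\vvec)$, with equality only if $(\overline\yvec,\overline\vvec)=(\yvec,\vvec)$; so $(\yvec,\vvec)$ is the unique solution of \eqref{variational_F-I}, which by Theorem~\ref{th-2_s} establishes \eqref{optimal-yv-good}.

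The step I expect to be the main obstacle is the regularity bookkeeping behind the second paragraph: a priori $\yvec=\rho^{-2}(\Lvec^*\pvec+\nabla\sigma)$ is only known to satisfy $\iint_{Q_T}\rho^2|\yvec|^2\,d\xvec\,dt<+\infty$, so one must justify that $\yvec\in C^0([0,T];\Hvec)\cap L^2(0,T;\Vvec)$, that its traces on $\{t=0\}$, $\{t=T\}$ and on $\Sigma_T$ are well defined, and that the integrations by parts against the test functions are legitimate. This rests on the density of $\Phivecc_0$ in $\Phivecc$, interior parabolic regularity for the Stokes operator, De Rham's theorem, and a careful use of the behaviour of the weights $\rho,\rho_0,\rho_1,\rho_2$ near $t=T$, which simultaneously yields the interior regularity for $t<T$ and the attainment of the null final condition; these are exactly the technical points carried out in \cite{FursikovImanuvilov}, whose adaptation to the present two-dimensional setting is routine.
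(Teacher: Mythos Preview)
Your outline is correct and follows exactly the Fursikov--Imanuvilov duality argument: Riesz in~$\Phivecc$ for well-posedness of~\eqref{pb-psigma}, definition of~$(\yvec,\vvec)$ by~\eqref{optimal-yv-good}, verification of admissibility via the weak formulation, and optimality by expanding the quadratic and annihilating the linear term through integration by parts. The paper does not supply its own proof of this theorem; it simply states that the result is proved in~\cite{FursikovImanuvilov}, so your proposal is precisely the argument the paper defers to, and your identification of the regularity bookkeeping (traces of~$\yvec$ at~$t=0,T$ and on~$\Sigma_T$, justification of the integrations by parts) as the only nontrivial technical point is accurate.
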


\

	Once more, \eqref{pb-psigma} can be viewed as the weak formulation of a (non-scalar) boundary-value problem for a PDE 
	that is fourth-order in~$\xvec$ and second-order in~$t$. Indeed, arguing as in Section \ref{sec_heat}, we can easily deduce 
	that $(\pvec,\sigma)$ satisfies, together with some $\pi \in\mathcal{D}'(Q_T)$, the following:
\begin{equation}\label{eq3}
	\left\{
		\begin{array}{lcl}
			\Lvec(\rho^{-2}(\Lvec^*\pvec+\nabla\sigma))+ \nabla \pi + 1_\om \rho_0^{-2}\pvec = 0			
			& \text{in}& Q_T,		\\ 
			\noalign{\smallskip}
			\nabla \cdot (\rho^{-2}(\Lvec^*\pvec+\nabla\sigma)) = 0,~\nabla \cdot\pvec=0    				
			& \text{in}& Q_T, 	  	\\ 
			\noalign{\smallskip}
			\pvec = \textbf{0}, \ \ \rho^{-2}(\Lvec^*\pvec+\nabla\sigma)= \0vec						
			& \text{on}& \Sigma_T, 	\\ 
			\noalign{\smallskip}
			\bigl.\rho^{-2}(\Lvec^*\pvec+\nabla\sigma)\bigr|_{t=0} = \yvec_0,\ \ \bigl.\rho^{-2}(\Lvec^*\pvec
			+\nabla\sigma)\bigr|_{t=T} = \0vec & \text{in}& \Om.
		\end{array}
	\right.
\end{equation}

   	By setting
\begin{equation}\label{defB0}
	\langle \ell_0,(\pvec,\sigma) \rangle := \int_\Omega \yvec_0(\xvec)\cdot\mathbf{p}(\xvec,0)\,d\xvec,
\end{equation}
	it is found that \eqref{pb-psigma} can be rewritten in the form
\begin{equation}\label{pb-psigma-s}
	\dis \mvec((\pvec,\sigma),((\pvec',\sigma')) = \langle \ell_0,(\pvec',\sigma') \rangle
	\dis \quad \forall (\pvec',\sigma') \in \Phivecc; \ (\pvec,\sigma) \in \Phivecc.
   \end{equation}

   	Thus, if $\Phivecc_h$ denotes a finite dimensional subspace of~$\Phivecc$, a natural approximation of~\eqref{pb-psigma-s} 
	is the following:
\begin{equation}\label{pb-psigma-h}
	\dis \mvec((\pvec_h,\sigma_h),((\pvec'_h,\sigma'_h)) = \langle \ell_0,(\pvec'_h,\sigma'_h) \rangle
	\dis \quad \forall (\pvec'_h,\sigma'_h) \in \Phivecc_h; \ (\pvec_h,\sigma_h) \in \Phivecc_h.
\end{equation}

	However, the couples $(\pvec,\sigma) \in \Phivecc$ satisfy several properties that make it considerably difficult to construct
	 explicitly finite dimensional spaces $\Phivecc_h \subset \Phivecc$. These are the following:
\begin{itemize}
	\item 	
		As in Section \ref{sec_heat}, since $\rho^{-1}(\Lvec^*\pvec + \nabla\sigma)$ must belong to~$\Lvec^2(Q_T)$
		and $\bigl. \rho_0^{-1}\pvec \bigr|_{q_T}$ must belong to~$\Lvec^2(q_T)$, the $p_i$ must possess first-order 
		time derivatives and up to second-order spatial derivatives in~$L^2_{\rm loc}(Q_T)$. As before, this means that, in practice, 
		the functions in~$\Phivecc_h$ must be $C^0$ in~$(\xvec,t)$ and $C^1$ in~$\xvec$.
	\item 
		We now have $\nabla \cdot \pvec \equiv 0$. It is not simple at all to give explicit expressions of zero 
		(or approximately zero) divergence functions associated to a triangulation of~$Q_T$ with this regularity. 	
\end{itemize}

	The second inconvenient is classical in computational fluid dynamics when one considers incompressible fluids.
	As in many other works, it will be overcome by introducing additional ``pressure-like'' multipliers;
	see Section~\ref{Sec-mixed_S2}.
	On the other hand, the first difficulty will be circumvented as in Section~\ref{sec_heat}, by introducing new variables and associated multipliers and eliminating all the second-order derivati\-ves in the formulation.

	In the following Sections, we will present several mixed problems connected to~\eqref{pb-psigma-s}.
	
	More precisely, in~Sections~\ref{Sec-mixed_S1} and~\ref{Sec-mixed_S4}, we consider mixed formulations where the constraint $\nabla\cdot\pvec \equiv 0$ is preserved.
	Accordingly, we only introduce one additional variable $\zvec$ and one multiplier, related to the identity $\zvec = L^*\pvec + \nabla\sigma$.
	Contrarily, Sections~\ref{Sec-mixed_S2}, \ref{Sec-mixed_S3} and~\ref{Sec-mixed_S5} deal with other different formulations where the zero-divergence condition is not imposed and, therefore, another multiplier appears.


\subsection{A first mixed formulation of \eqref{pb-psigma-s}}\label{Sec-mixed_S1}

	Arguing as in the case of the heat equation and introducing the variable
$$
	\zvec=\Lvec^*\pvec+\nabla \sigma,
$$
	we see that \eqref{pb-psigma-s} is equivalent to:
\begin{equation}\label{reform_pb-psigma-s}
	\left\{
		\begin{array}{l}
			\noalign{\smallskip}\dis 
				\avec((\zvec,\pvec,\sigma),(\zvec',\pvec',\sigma')) + \bvec((\zvec',\pvec',\sigma'),\lavec) 
				= \langle \mat{\ell},(\zvec',\pvec',\sigma') \rangle	,											\\ 
				\noalign{\smallskip}\dis 
				\bvec((\zvec,\pvec,\sigma),\lavec') = 0,											\\ 
				\noalign{\smallskip}\dis 
				\quad \forall ((\zvec',\pvec',\sigma'),\lavec') \in \Zvec \times \Phivecc \times \Lavec ;
				\ ((\zvec,\pvec,\sigma),\lavec) \in \Zvec \times \Phivecc \times \Lavec,
		\end{array}
	\right.
\end{equation}
	where
\[
	\begin{alignedat}{2}
				\noalign{\smallskip}\dis
				\Zvec=\Lvec^2(\rho^{-1};Q_T),						
				\quad\Lavec=\Lvec^2(\rho;Q_T)
	\end{alignedat}
\]
	and the bilinear forms $\avec(\cdot\,,\cdot)$ and $\bvec(\cdot\,,\cdot)$ are given by
$$
	\avec((\zvec,\pvec,\sigma),(\zvec',\pvec',\sigma')) := \iint_{Q_T} \left(\rho^{-2}\zvec\cdot\zvec' 
	+ \rho_0^{-2}\pvec\cdot\pvec'1_\om  \right)\,d\xvec\,dt
$$
	and
\begin{equation}\label{b_mix_s_1}
	\bvec((\zvec,\pvec,\sigma),\lavec):=\dis\iint_{Q_T} \left[\zvec-(\Lvec^*\pvec+\nabla\sigma)\right]\cdot\lavec\,d\xvec\,dt\\
\end{equation}
	and the linear form $\mat{\ell}$ is given by
\[
	\langle \mat{\ell},(\zvec,\pvec,\sigma) \rangle: = \int_\Om  \yvec_0(\xvec) \cdot \pvec(\xvec,0)\,d\xvec.
\]


\subsection{A second mixed formulation of \eqref{pb-psigma-s}}\label{Sec-mixed_S2}

	As we have said, numerical difficulties are found when we try to introduce finite element approximations ($C^1$ in space, $C^0$ in 
	time) of the space $\Phivecc$, where the $(\pvec,\sigma)$ satisfy $\nabla\cdot\pvec \equiv 0$. Accordingly, before approximating, we will reformulate~\eqref{pb-psigma} as a new mixed system involving a multiplier associated to this constraint. 
	
	Let us introduce
\begin{equation}\label{defPhi00}
	\dis \Phivec_0 = \{\, (\pvec,\sigma) : p_i, \sigma \in C^2(\overline{Q}_T),\dis \ p_i = 0 \ \hbox{on} \ \Sigma_T, \ \int_\Om \sigma(\xvec,t) d\xvec= 0 \,\ \forall t \,\}.
\end{equation}
 
 	We have the following Carleman estimates for the couples in $ \Phivec_0$:	
	
\begin{proposition}\label{prop-Carleman-Stokess}
	There exist weights  $\rho$, $\rho_0$ and $\rho_*$ and a constant $C$ only depending on $\Om$, $\om$ and~$T$, 
	with the following property:
\begin{equation}\label{Carleman-ineqq}
	\begin{array}{l}
		\dis \iint_{Q_T}   \rho_0^{-2} |\pvec|^2  d\xvec\,dt +\|\pvec(\cdot,0)\|^2_{\Vvec'}
		\leq C \left( 
		\iint_{Q_T} \rho_*^{-2} |\Lvec^*\pvec + \nabla\sigma|^2\,d\xvec\,dt 
		+ \iint_{q_T} \rho_0^{-2} |\pvec|^2 \,d\xvec\,dt\right.  										\\ 
		\noalign{\smallskip}
		\dis\phantom{\iint_{Q_T}   \rho_0^{-2} |\pvec|^2  d\xvec\,dt +\|\pvec(\cdot,0)\|^2_{\Vvec'} \leq C}
		\left.+\iint_{Q_T}  |\nabla\cdot \pvec|^2\,d\xvec\,dt\right)
	\end{array}
\end{equation}
	for all $(\pvec,\sigma)\in \Phivec_0$.
\end{proposition}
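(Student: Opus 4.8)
The plan is to derive \eqref{Carleman-ineqq} from the divergence-free Carleman inequality \eqref{Carleman-ineq_stokes} of Proposition~\ref{prop-Carleman-Stokes} by splitting off the divergence of $\pvec$, and to obtain the $\|\pvec(\cdot,0)\|_{\Vvec'}$ term by a separate energy estimate in which the pressure gradient disappears.

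First I would, for a given $(\pvec,\sigma)\in\Phivec_0$, set $q:=\nabla\cdot\pvec$ and observe that $\int_\Om q(\cdot,t)\,d\xvec=0$ for every $t$, because $\pvec=\0vec$ on $\Sigma_T$. Hence, for a.e.\ $t$, the problem $\nabla\cdot\rvec=q$ in $\Om$, $\rvec=\0vec$ on $\Gamma$, can be solved by a fixed linear (time-independent) Bogovskii-type operator, yielding $\rvec$ with $\|\rvec(\cdot,t)\|_{\Hvec^1_0(\Om)}\le C\|q(\cdot,t)\|_{L^2(\Om)}$ and $\|\rvec_t(\cdot,t)\|_{\Hvec^1_0(\Om)}\le C\|q_t(\cdot,t)\|_{L^2(\Om)}$. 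Then $\tilde\pvec:=\pvec-\rvec$ satisfies $\nabla\cdot\tilde\pvec\equiv0$ and $\tilde\pvec=\0vec$ on $\Sigma_T$, so $(\tilde\pvec,\sigma)\in\Phivecc$ and \eqref{Carleman-ineq_stokes} applies. With $\fvec:=\Lvec^*\pvec+\nabla\sigma$ we have $\Lvec^*\tilde\pvec+\nabla\sigma=\fvec-\Lvec^*\rvec$, so, discarding all but the $\rho_0^{-2}|\tilde\pvec|^2$ term on the left of \eqref{Carleman-ineq_stokes} and using $\rho^{-2}\le C$, I would arrive at
$$
\iint_{Q_T}\rho_0^{-2}|\pvec|^2\,d\xvec\,dt\le C\Big(\iint_{Q_T}\rho^{-2}|\fvec|^2\,d\xvec\,dt+\iint_{q_T}\rho_0^{-2}|\pvec|^2\,d\xvec\,dt+\iint_{Q_T}\rho^{-2}\big(|\rvec|^2+|\Lvec^*\rvec|^2\big)\,d\xvec\,dt\Big).
$$

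The main obstacle is the last term: since $\Lvec^*\rvec=-\rvec_t-\nu\Delta\rvec$ carries two spatial and one time derivative of $\rvec$, hence a priori of $q=\nabla\cdot\pvec$, it cannot be controlled by the unweighted $L^2(Q_T)$-norm of $\nabla\cdot\pvec$ that is available on the right of \eqref{Carleman-ineqq}. I would get around this by not using \eqref{Carleman-ineq_stokes} as a black box but instead re-running the conjugated-operator computation behind Proposition~\ref{prop-Carleman-Stokes} (following \cite{FursikovImanuvilov,IMANU}) directly on $(\pvec,\sigma)$, keeping every term that contains $\nabla\cdot\pvec$; because the left-hand side of \eqref{Carleman-ineqq} is very weak — only $\rho_0^{-2}|\pvec|^2$ and $\|\pvec(\cdot,0)\|^2_{\Vvec'}$, with no derivatives of $\pvec$ — those terms can be absorbed into the left-hand side (after taking $K_1,K_2$ in \eqref{weights-0} large enough) or bounded by $\iint_{Q_T}|\nabla\cdot\pvec|^2\,d\xvec\,dt$, at the cost of replacing $\rho$ by a suitably weaker weight $\rho_*$ (so that $\rho_*^{-2}\ge\rho^{-2}$) in front of $|\Lvec^*\pvec+\nabla\sigma|^2$. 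For the $\|\pvec(\cdot,0)\|^2_{\Vvec'}$ term I would argue by energy: pairing the identity $-\pvec_t-\nu\Delta\pvec=\fvec-\nabla\sigma$ with an arbitrary $\varvec\in\Vvec$ and using $\langle\nabla\sigma,\varvec\rangle=-\langle\sigma,\nabla\cdot\varvec\rangle=0$ gives $\|\pvec_t(\cdot,t)\|_{\Vvec'}\le C\big(\|\nabla\pvec(\cdot,t)\|_{\Lvec^2(\Om)}+\|\fvec(\cdot,t)\|_{\Lvec^2(\Om)}\big)$; then, choosing $t_0\in(0,T/2)$ so that $\|\pvec(\cdot,t_0)\|_{\Lvec^2(\Om)}$ and $\|\nabla\pvec(\cdot,t_0)\|_{\Lvec^2(\Om)}$ are controlled (possible since on $[0,T/2]$ the weights are bounded away from $0$, so the corresponding bulk integrals are already dominated by the right-hand side of \eqref{Carleman-ineqq}), and writing $\|\pvec(\cdot,0)\|_{\Vvec'}\le\|\pvec(\cdot,t_0)\|_{\Vvec'}+\int_0^{t_0}\|\pvec_t(\cdot,t)\|_{\Vvec'}\,dt$, yields the desired bound. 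Adding the two estimates gives \eqref{Carleman-ineqq}.
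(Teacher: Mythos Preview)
Your proposal has a genuine gap. You correctly identify that the Bogovskii splitting $\pvec=\tilde\pvec+\rvec$ fails because $\Lvec^*\rvec$ involves too many derivatives of $\nabla\cdot\pvec$, but the fix you offer --- ``re-running the conjugated-operator computation behind Proposition~\ref{prop-Carleman-Stokes} directly on $(\pvec,\sigma)$, keeping every term that contains $\nabla\cdot\pvec$'' --- is not an argument, it is a hope. The Stokes Carleman proof in \cite{FursikovImanuvilov,IMANU} goes through the vorticity (or through projected equations) and uses $\nabla\cdot\pvec=0$ structurally at several places; the resulting extra terms when $\nabla\cdot\pvec\neq0$ involve derivatives of $\nabla\cdot\pvec$ and weighted integrals that there is no reason to control by a plain unweighted $\iint_{Q_T}|\nabla\cdot\pvec|^2$. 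Saying that the left-hand side of \eqref{Carleman-ineqq} is ``very weak'' does not help: the difficulty is on the right-hand side, not the left. Your energy estimate for $\|\pvec(\cdot,0)\|_{\Vvec'}$ inherits the same gap, since it requires a bound on $\int_0^{t_0}\|\nabla\pvec(\cdot,t)\|_{\Lvec^2(\Om)}^2\,dt$ that would only come from the Carleman step you have not actually carried out.

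The paper's proof avoids all of this by a different splitting. Instead of lifting the divergence by Bogovskii, it writes $(\pvec,\sigma)=(\hat\pvec,\hat\sigma)+(\tilde\pvec,\tilde\sigma)$ where $(\hat\pvec,\hat\sigma)$ solves the backward \emph{Stokes} system $\Lvec^*\hat\pvec+\nabla\hat\sigma=\fvec$, $\nabla\cdot\hat\pvec=0$, $\hat\pvec|_{\Sigma_T}=\0vec$, $\hat\pvec(\cdot,T)=\pvec(\cdot,T)$, with $\fvec:=\Lvec^*\pvec+\nabla\sigma$, and $(\tilde\pvec,\tilde\sigma)$ solves $\Lvec^*\tilde\pvec+\nabla\tilde\sigma=\0vec$, $\nabla\cdot\tilde\pvec=\nabla\cdot\pvec$, $\tilde\pvec|_{\Sigma_T}=\0vec$, $\tilde\pvec(\cdot,T)=\0vec$. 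The divergence-free piece $(\hat\pvec,\hat\sigma)$ is handled by Proposition~\ref{prop-Carleman-Stokes} as a black box (together with \eqref{cont-C0} for the initial trace). The whole point is that the nonhomogeneous-divergence piece $(\tilde\pvec,\tilde\sigma)$ has \emph{zero} source, so no Carleman is needed for it: by the transposition solution theory for the Stokes system with nonhomogeneous divergence (Raymond \cite{RAYMOND}) one gets directly
\[
\|\tilde\pvec\|_{\Lvec^2(Q_T)}^2+\|\tilde\pvec(\cdot,0)\|_{\Vvec'}^2\le C\,\|\nabla\cdot\pvec\|_{L^2(Q_T)}^2,
\]
which is exactly the unweighted $L^2$ control you were missing. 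Summing the two estimates (and bounding the local term $\iint_{q_T}\rho_0^{-2}|\hat\pvec|^2$ via $|\hat\pvec|^2\le 2|\pvec|^2+2|\tilde\pvec|^2$) gives \eqref{Carleman-ineqq}. The moral: split through the Stokes solution operator, not through Bogovskii, so that the divergence part carries no derivatives to be estimated.
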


\begin{proof}
	The proof follows easily by splitting $(\pvec,\sigma)\in \Phivec_0$ in the form
$$
	(\pvec,\sigma)=(\tilde\pvec,\tilde\sigma)+(\hat\pvec,\hat\sigma)
$$
	where $(\hat\pvec,\hat\sigma)$ solves the linear problem
\begin{equation}\label{back_stokes_div_split}
	\left\{
		\begin{array}{lcl}
			\Lvec^*\hat\pvec+\nabla\hat\sigma = \fvec	& \text{in}	&	Q_T,		\\ 
			\noalign{\smallskip}
			\nabla \cdot\hat\pvec=0    					& \text{in}	&	Q_T,   		\\ 
			\noalign{\smallskip}
			\hat\pvec = \0vec						& \text{on}	&	\Sigma_T , \\ 
			\noalign{\smallskip}
			\hat\pvec(\cdot,T) = \pvec(\cdot,T)			& \text{in}	&	\Om
		\end{array}
	\right.
\end{equation}
	with $\fvec:=\Lvec^*\pvec+\nabla\sigma$ and $(\tilde\pvec,\tilde\sigma)$ solves the linear problem
\begin{equation}\label{back_stokes_div}
	\left\{
		\begin{array}{lcl}
			\Lvec^*\tilde\pvec+\nabla\tilde\sigma = \0vec	& \text{in}	&	Q_T,		\\ 
			\noalign{\smallskip}
			\nabla \cdot\tilde\pvec=\nabla \cdot\pvec    			& \text{in}	&	Q_T,   		\\ 
			\noalign{\smallskip}
			\tilde\pvec = \0vec							& \text{on}	&	\Sigma_T,		\\ 
			\noalign{\smallskip}
			\tilde\pvec(\cdot\,,T) = \0vec					& \text{in}	&	\Om.
		\end{array}
	\right.
\end{equation}

	In view of the Carleman estimates \eqref{Carleman-ineq_stokes} for $(\hat\pvec,\hat\sigma)$, we have
\[
	\begin{array}{c}
		\dis \iint_{Q_T} \rho_0^{-2} |\hat\pvec|^2 \,d\xvec\,dt + \| \pvec(\cdot\,,0) \|_{\Vvec}^2
		\leq C \left( \iint_{Q_T} \rho^{-2} |\fvec|^2\,d\xvec\,dt
		+ \iint_{q_T} \rho_0^{-2} |\hat\pvec|^2 \,d\xvec\,dt\right).
	\end{array}
\]	
	 On the other hand, $(\tilde\pvec,\tilde\sigma)$ solves~\eqref{back_stokes_div} in the sense of transposition,  that is, 
\[
	\langle\tilde\pvec,\mat{\psi}\rangle_{\Lvec^2(Q_T),\Lvec^2(Q_T)}
	+\langle\tilde\pvec(\cdot,0),\uvec_0\rangle_{\Vvec',\Vvec}=-\iint_{Q_T} (\nabla\cdot \pvec) \, h \,d\xvec\,dt,
\]
	for all $(\mat{\psi},\uvec_0)\in \Lvec^2(Q_T)\times \Vvec$, where $(\uvec,h)$ is the unique strong solution to
\[
	\left\{
		\begin{array}{lcl}
			\Lvec\uvec + \nabla h = \mat{\psi}	& \text{in}	&	Q_T,		\\ 
			\noalign{\smallskip}
			\nabla \cdot\uvec = \0vec    			& \text{in}	&	Q_T,   		\\ 
			\noalign{\smallskip}
			\uvec = \0vec					& \text{on}	&	\Sigma_T,		\\ 
			\noalign{\smallskip}
			\uvec(\cdot\,,0) = \uvec_0			& \text{in}	&	\Om.
		\end{array}
	\right.
\]
	Consequently, we can argue as in~\cite{RAYMOND} and deduce that
\[
	\|\tilde\pvec\|^2_{\Lvec^2(Q_T)}+\|\tilde\pvec(\cdot,0)\|^2_{\Vvec'}\leq C\,\|\nabla\cdot \pvec\|^2_{L^2(Q_T)}.
\]	
	
	Now, putting together the estimates for $(\hat\pvec,\hat\sigma)$ and $(\tilde\pvec,\tilde\sigma)$,  we are easily led easily to~\eqref{Carleman-ineqq}.
\end{proof}

\

   	Let us introduce the bilinear form
$$
	\tilde{\mvec}((\pvec,\sigma),(\pvec',\sigma')) := \mvec((\pvec,\sigma),(\pvec',\sigma'))
	+\iint_{Q_T} (\nabla\cdot \pvec) (\nabla\cdot \pvec')\,d\xvec\,dt.
$$
	Again, in view of the unique continuation property of the Stokes system, $\tilde{\mvec}(\cdot\,,\cdot)$ is a scalar product 
	in~$\Phivec_0$.
	
	Let $\Phivec$ be the completion of $\Phivec_0$ with respect to this scalar product. As before, $\Phivec$ is a Hilbert space, 
	the functions $(\pvec,\sigma) \in \Phivec$ satisfy
\begin{equation}\label{finite-rhss}
	\iint_{Q_T} \rho^{-2} |\Lvec^*\pvec + \nabla\sigma|^2 \,d\xvec\,dt+ \iint_{q_T} \rho_0^{-2} |\pvec|^2\,d\xvec\,dt
	+\iint_{Q_T} |\nabla\cdot \pvec|^2 d\xvec\,dt < +\infty
\end{equation}
	and, from Proposition~\ref{prop-Carleman-Stokess} and a density argument, we also have \eqref{Carleman-ineqq} 
	for all $(\pvec,\sigma) \in \Phivec$.
   	
	On the other hand, any $(\pvec,\sigma) \in \Phivec$ satisfies
$$
	\rho_0^{-1}\pvec \in \Lvec^2(Q_T),\quad \pvec(\cdot,0)\in \Vvec'
$$
	and
\begin{equation}\label{cont-C00}
	\|\pvec(\cdot\,,0) \|_{\Vvec'}^2 \leq C \, \tilde{\mvec}((\pvec,\sigma),(\pvec,\sigma)) \quad \forall (\pvec,\sigma) \in \Phivec.
\end{equation}
	
   	By setting
\begin{equation}\label{defB00}
	\langle \mat{\tilde{\ell}},(\pvec,\sigma) \rangle := \langle \pvec(\cdot,0),\yvec_0\rangle_{\Vvec',\Vvec}
\end{equation}
	thanks to \eqref{cont-C00}, we have that $ \mat{\tilde{\ell}}$ is continuous on $\Phivec$.

\

	Let us introduce the space
$$
	\tilde M =  \{\, \mu\in L^2(Q_T) : \int_\Om \mu(\xvec,t) \, d\xvec= 0 \,~\hbox{a.e.} \,\}
$$ 
	
	and the following reformulation of~\eqref{pb-psigma-s}:
\begin{equation}\label{pb-psigma-m}
	\left\{
		\begin{array}{l}
			\dis \tilde{\mvec}((\pvec,\sigma),(\pvec',\sigma'))  + \iint_{Q_T}\left(\nabla\cdot\pvec'\right)\mu\,d\xvec\,dt 
			=\langle \tilde{\mat{\ell}},(\pvec,\sigma) \rangle,\\ 
			\noalign{\smallskip}
			\dis \iint_{Q_T}\left( \nabla\cdot \pvec \right) \mu'\,d\xvec\,dt  = 0,\\ 
			\noalign{\smallskip}
			\dis \qquad \forall (\pvec',\sigma',\mu') \in \Phivec\times \tilde M; \ (\pvec,\sigma,\mu) 
			\in \Phivec\times \tilde M.
		\end{array}
	\right.
\end{equation}
   
	Once more, notice that the definitions of $\Phivec$ and~$\tilde M$ are the appropriate to keep all the terms 
	in \eqref{pb-psigma-m} meaningful.

   	Let the bilinear forms $\tilde\avec(\cdot\,,\cdot)$ and $\tilde\bvec(\cdot\,,\cdot)$ be given by
$$
	\tilde\avec((\pvec,\sigma),(\pvec',\sigma')) := \mvec((\pvec,\sigma),(\pvec',\sigma'))
$$
	and 
$$
	\tilde\bvec(\pvec,\sigma,\mu) := \iint_{Q_T}\left( \nabla\cdot \pvec \right) \mu\,d\xvec\,dt.
$$

	Then, $\tilde\avec(\cdot\,,\cdot)$ and $\tilde\bvec(\cdot\,,\cdot)$ are well-defined and continuous and
	\eqref{pb-psigma-m} reads:
\begin{equation}\label{pb-psigma-mm}
	\left\{
		\begin{array}{l}
			\dis \tilde\avec((\pvec,\sigma),(\pvec',\sigma')) + \tilde\bvec(\pvec',\sigma',\mu) 
			= \langle \mat{\tilde{\ell}},(\pvec',\sigma') \rangle,							\\ 
			\noalign{\smallskip}
			\dis \tilde\bvec((\pvec,\sigma),\mu') = 0,									\\ 
			\noalign{\smallskip}
			\dis \qquad \forall (\pvec',\sigma',\mu') \in \Phivec\times\tilde  M; \ 
			(\pvec,\sigma,\mu) \in \Phivec\times\tilde  M.
		\end{array}
	\right.
\end{equation}

	One has the following:
\begin{proposition}\label{prop-equiv}
	There exists exactly one solution to~\eqref{pb-psigma-mm}. Furthermore, \eqref{pb-psigma-s} and~\eqref{pb-psigma-mm} 
	are equivalent problems in the following sense:
\begin{enumerate}
	\item 
		If $(\pvec,\sigma,\mu)$ solves \eqref{pb-psigma-mm}, then $(\pvec,\sigma)$ solves \eqref{pb-psigma-s}.
	\item 
		If $(\pvec,\sigma)$ solves \eqref{pb-psigma-s}, there exists $\mu \in \tilde M$ such that $(\pvec,\sigma,\mu)$
		solves~\eqref{pb-psigma-mm}.
\end{enumerate}
\end{proposition}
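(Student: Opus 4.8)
The plan is to follow the same abstract saddle-point strategy used in the proof of Proposition~\ref{prop-equiv_h}, adapting it to the presence of the extra ``pressure-like'' multiplier $\mu$. Concretely, I would first show that \eqref{pb-psigma-mm} has exactly one solution by verifying the two Brezzi conditions: coercivity of $\tilde\avec(\cdot\,,\cdot)$ on the kernel
$$
\tilde V := \{\,(\pvec,\sigma)\in\Phivec : \tilde\bvec((\pvec,\sigma),\mu')=0 \ \ \forall \mu'\in\tilde M\,\},
$$
and the inf-sup condition for $\tilde\bvec(\cdot\,,\cdot)$ with respect to $\Phivec$ and $\tilde M$. The first point is the key observation: if $(\pvec,\sigma)\in\tilde V$ then $\nabla\cdot\pvec=0$ in $Q_T$ (because $\tilde\bvec((\pvec,\sigma),\nabla\cdot\pvec)=\iint_{Q_T}|\nabla\cdot\pvec|^2=0$, noting $\nabla\cdot\pvec\in\tilde M$ since $\int_\Om\nabla\cdot\pvec\,d\xvec=0$ by the boundary condition $\pvec|_{\Sigma_T}=0$). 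On such couples $\tilde\avec((\pvec,\sigma),(\pvec,\sigma))=\mvec((\pvec,\sigma),(\pvec,\sigma))$, and since $\nabla\cdot\pvec=0$ the $\tilde{\mvec}$-norm and the $\mvec$-norm coincide, so $\tilde\avec((\pvec,\sigma),(\pvec,\sigma))=\|(\pvec,\sigma)\|_{\Phivec}^2$ on $\tilde V$; coercivity follows. For the inf-sup condition, given $\mu\in\tilde M$ I would pick $\pvec$ solving a stationary Stokes-type problem $-\Delta\pvec+\nabla\pi=\nabla\mu$, $\pvec|_{\Sigma_T}=0$ at a.e.\ $t$ (or simply invoke the surjectivity of the divergence operator from $\Hvec^1_0(\Om)$ onto $\tilde M(\cdot\,,t)$, uniformly in $t$, which is the classical Bogovskii/de Rham result), take $\sigma=0$, and check that $\|(\pvec,0)\|_{\Phivec}\leq C\|\mu\|_{\tilde M}$ while $\tilde\bvec((\pvec,0),\mu)=\iint_{Q_T}(\nabla\cdot\pvec)\mu=c\|\mu\|_{\tilde M}^2$; continuity of $\tilde{\mat\ell}$ is already granted by \eqref{cont-C00}. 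This yields existence and uniqueness via the standard theory quoted as \cite{BrezziFortin, rob-tho}.

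Next I would establish the equivalence. For item~1, suppose $(\pvec,\sigma,\mu)$ solves \eqref{pb-psigma-mm}. The second equation says exactly that $(\pvec,\sigma)\in\tilde V$, hence $\nabla\cdot\pvec=0$, so in fact $(\pvec,\sigma)\in\Phivecc$ and $\tilde\mvec((\pvec,\sigma),(\pvec',\sigma'))=\mvec((\pvec,\sigma),(\pvec',\sigma'))$ whenever we also take $\pvec'$ with $\nabla\cdot\pvec'=0$. Restricting the first equation of \eqref{pb-psigma-mm} to test couples $(\pvec',\sigma')\in\Phivecc\subset\Phivec$ kills the term $\iint_{Q_T}(\nabla\cdot\pvec')\mu$ and leaves precisely $\mvec((\pvec,\sigma),(\pvec',\sigma'))=\langle\ell_0,(\pvec',\sigma')\rangle$ for all $(\pvec',\sigma')\in\Phivecc$, i.e.\ \eqref{pb-psigma-s} (here I use that $\langle\tilde{\mat\ell},(\pvec',\sigma')\rangle=\langle\pvec'(\cdot,0),\yvec_0\rangle_{\Vvec',\Vvec}=\int_\Om\yvec_0\cdot\pvec'(\cdot,0)=\langle\ell_0,(\pvec',\sigma')\rangle$ for $\pvec'\in\Phivecc$, since then $\pvec'(\cdot,0)\in\Vvec$).

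For item~2, let $(\pvec,\sigma)$ solve \eqref{pb-psigma-s}; it lies in $\Phivecc\subset\Phivec$ and satisfies $\nabla\cdot\pvec=0$, so the second equation of \eqref{pb-psigma-mm} holds automatically. It remains to produce $\mu\in\tilde M$ with
$$
\iint_{Q_T}(\nabla\cdot\pvec')\mu\,d\xvec\,dt = \langle\tilde{\mat\ell},(\pvec',\sigma')\rangle - \mvec((\pvec,\sigma),(\pvec',\sigma')) \qquad\forall(\pvec',\sigma')\in\Phivec,
$$
i.e.\ the right-hand side, as a linear functional of $(\pvec',\sigma')$, must factor through $(\pvec',\sigma')\mapsto\nabla\cdot\pvec'$. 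This is the step requiring care: one checks the functional vanishes on the kernel $\tilde V$ of $\tilde\bvec$ — indeed for $(\pvec',\sigma')\in\tilde V$ we have $\nabla\cdot\pvec'=0$, hence $\pvec'\in\Phivecc$, and then the right-hand side is $\langle\ell_0,(\pvec',\sigma')\rangle-\mvec((\pvec,\sigma),(\pvec',\sigma'))=0$ by \eqref{pb-psigma-s} — and then invokes the closed-range theorem together with the already-proved inf-sup condition for $\tilde\bvec$ to conclude the existence of such a $\mu\in\tilde M$. I expect the main obstacle to be precisely this: verifying that $\tilde\bvec$ has closed range and a bounded right inverse onto $\tilde M$ with constants uniform in $t$, so that the de Rham/Bogovskii argument goes through at the level of the time-dependent spaces $\Phivec$ and $\tilde M$; once the inf-sup constant is in hand, the rest is the routine Brezzi machinery already deployed in Proposition~\ref{prop-equiv_h}.
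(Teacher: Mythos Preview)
Your overall strategy---verify the two Brezzi conditions (coercivity of $\tilde\avec$ on the kernel $\tilde V$ and the inf-sup condition for $\tilde\bvec$), then read off existence, uniqueness and equivalence---is exactly the paper's strategy, and your coercivity argument and your treatment of items~1 and~2 are correct and in fact more detailed than what the paper writes.

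The gap is in your inf-sup verification. A slice-by-slice Bogovskii (or stationary Stokes) lifting $\pvec(\cdot\,,t)\in\Hvec^1_0(\Om)$ with $\nabla\cdot\pvec(\cdot\,,t)=\mu(\cdot\,,t)$ and $\sigma=0$ does \emph{not} give you an element of $\Phivec$ with $\|(\pvec,0)\|_{\Phivec}\leq C\|\mu\|_{\tilde M}$. Recall that the $\Phivec$-norm is
\[
\tilde\mvec((\pvec,\sigma),(\pvec,\sigma))
=\iint_{Q_T}\rho^{-2}\,|\Lvec^*\pvec+\nabla\sigma|^2\,d\xvec\,dt
+\iint_{q_T}\rho_0^{-2}|\pvec|^2\,d\xvec\,dt
+\iint_{Q_T}|\nabla\cdot\pvec|^2\,d\xvec\,dt,
\]
and with $\sigma=0$ the first integrand is $\rho^{-2}\,|{-}\pvec_t-\nu\Delta\pvec|^2$. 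Bogovskii gives only $\pvec(\cdot\,,t)\in\Hvec^1_0(\Om)$, so $\Delta\pvec$ is not in $L^2$; and since $\mu\in L^2(Q_T)$ carries no time regularity, $\pvec_t$ is not even defined in general. The obstruction is therefore not a matter of ``constants uniform in $t$'' but the very presence of $\pvec_t$ and $\Delta\pvec$ in the norm of $\Phivec$.

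The paper sidesteps this by choosing the lifting so that the dangerous term vanishes identically: given $\mu\in\tilde M$, it takes $(\pvec,\sigma)$ to be the (transposition) solution of the \emph{backward evolutionary} Stokes problem with nonhomogeneous divergence
\[
\Lvec^*\pvec+\nabla\sigma=\0vec,\quad \nabla\cdot\pvec=\mu\ \text{in }Q_T,\quad \pvec=\0vec\ \text{on }\Sigma_T,\quad \pvec(\cdot\,,T)=\0vec,
\]
for which Raymond's estimate yields $\|\pvec\|_{\Lvec^2(Q_T)}\leq C\|\mu\|_{L^2(Q_T)}$. Then $\Lvec^*\pvec+\nabla\sigma\equiv\0vec$ kills the first term of $\tilde\mvec$, and the remaining two terms are bounded by $C\|\mu\|_{L^2(Q_T)}^2$; since $\nabla\cdot\pvec=\mu$, one gets $\tilde\bvec((\pvec,\sigma),\mu)=\|\mu\|_{\tilde M}^2$ and $\|(\pvec,\sigma)\|_{\Phivec}\leq C\|\mu\|_{\tilde M}$, which is the inf-sup inequality. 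Replacing your stationary lifting by this evolutionary one fixes the proof; the rest of your write-up goes through unchanged.
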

\begin{proof}
	Let us set
$$
	\tilde{\Psivecc}:=\{\,(\pvec,\sigma)\in \Phivec: \tilde\bvec(\pvec,\sigma,\mu) = 0
	\quad \forall \mu\in \tilde M\,\}.
$$

	We will check that
\begin{itemize}
	\item 
		$\tilde\avec(\cdot\,,\cdot)$ is coercive in $\tilde{\Psivecc}$.
	\item 
		$\tilde\bvec(\cdot\,,\cdot)$ satisfies the usual ``inf-sup" condition in $\Phivec\times \tilde M$.
\end{itemize}

	The proofs of these assertions are straightforward. Indeed, we have $\tilde{\Psivecc}=\Phivecc$
	(the completion of $\Phivecc_0$ with respect to $\mvec(\cdot,\cdot)$, see \eqref{defm}). Thus,
\[
	\begin{array}{lll}
		\dis\tilde\avec((\pvec,\sigma),(\pvec,\sigma))=\dis \mvec((\pvec,\sigma),(\pvec,\sigma)) = \tilde{\mvec}((\pvec,\sigma),(\pvec,\sigma))\quad \forall (\pvec,\sigma)\in \tilde{\Psivecc}
	\end{array}
\]
	and this proves that $\tilde\avec(\cdot\,,\cdot)$ is coercive in $\tilde{\Psivecc}$. 
	
	On the other hand,
	the ``inf-sup'' condition is a consequence of the fact that, for any $\mu\in\tilde  M$, there exists 
	$(\pvec,\sigma)\in \Phivec$ such that 
\begin{equation}\label{inf_sup_div_tilde}
	\tilde\bvec(\pvec,\sigma,\mu)=\|\mu\|^2_{\tilde M}\quad\hbox{and}
	\quad\|(\pvec,\sigma)\|_{\Phivec}\leq C \|\mu\|_{\tilde M}.
\end{equation}
	This can be seen as follows: for any fixed $\mu\in \tilde M$, let $(\pvec,\sigma)$ be the solution to
\[
	\Lvec^*\pvec+\nabla \sigma=\0vec \,~ \textrm{in}\,~ Q_T,
	\quad 
	\nabla\cdot\pvec=\mu \,~ \textrm{in}\,~ Q_T, 
	\quad 
	\pvec=\0vec\,~\textrm{ on }~\,\Sigma_T, 
	\quad 
	\pvec(\cdot,T)=\0vec \,~ \textrm{in}\,~ \Om;
\]	
	then $\pvec$ belongs to $\Lvec^2(Q_T)$ and  
\begin{equation}
	\Vert \pvec\Vert_{\Lvec^2(Q_T)} \leq  C\Vert \mu\Vert_{L^2(Q_T)}
\end{equation}
	(see \cite{RAYMOND}). Therefore, it is lear that \eqref{inf_sup_div_tilde} also holds.
\end{proof}


\subsection{A third mixed reformulation of \eqref{pb-psigma-mm} with an additional multiplier}\label{Sec-mixed_S3}

	As in Section \ref{Sec-mixed_S1}, introducing the variable
$$
	\zvec:=\Lvec^*\pvec+\nabla \sigma,
$$
	we observe that \eqref{pb-psigma-mm} is equivalent to:
\begin{equation}\label{pb-psigma-mmm_c1}
	\left\{
		\begin{array}{l}
			\dis \hat{\avec}((\zvec,\pvec,\sigma),(\zvec',\pvec',\sigma')) 
			+ \hat{\bvec}((\zvec',\pvec',\sigma'),(\mat{\lambda},\mu)) 
			= \langle \mat{\hat{\ell}},(\zvec',\pvec',\sigma') \rangle,			\\ 
			\noalign{\smallskip}
			\dis  \hat{\bvec}((\zvec,\pvec,\sigma),(\mat{\lambda}',\mu')) = 0,	\\ 
			\noalign{\smallskip}
			\dis \qquad \forall ((\zvec',\pvec',\sigma'),(\mat{\lambda}',\mu')) 
			\in \Zvec \times \tilde\Phivecc \times \Lavec \times \tilde M;
			\ ((\zvec,\pvec,\sigma),(\mat{\lambda},\mu)) 
			\in \Zvec \times \tilde\Phivecc \times \Lavec \times \tilde M,
		\end{array}
	\right.
\end{equation}
	where the bilinear forms $\hat{\avec}(\cdot\,,\cdot)$ and~$\hat{\bvec}(\cdot\,,\cdot)$ are given by
$$
	\hat{\avec}((\zvec,\pvec,\sigma),(\zvec',\pvec',\sigma')) := \iint_{Q_T} \left( \rho^{-2}\zvec\cdot\zvec' 
	+ \rho_0^{-2}\pvec\cdot\pvec'1_\omega  \right)\,d\xvec\,dt
$$
	and
\begin{equation}\label{defhatb}
	\hat{\bvec}((\zvec,\pvec,\sigma),(\mat{\lambda},\mu))
	:=\dis\iint_{Q_T} \left[\zvec-(\Lvec^*\pvec+\nabla\sigma)\right]\cdot\mat{\lambda}\,d\xvec\,dt 
	+  \iint_{Q_T}\left( \nabla\cdot \pvec \right) \mu\,d\xvec\,dt \\
\end{equation}
	and the linear form $\mat{\hat{\ell}}$ is given by
$$
	\langle \mat{\hat{\ell}},(\zvec,\pvec,\sigma) \rangle := \int_\Om  \yvec_0(\xvec) \cdot \pvec(\xvec,0)\,d\xvec.
$$

   	Now, the following holds:
\begin{proposition}\label{prop-equiv-1}
   	There exists exactly one solution to~\eqref{pb-psigma-mmm_c1}.
   	Furthermore, \eqref{pb-psigma-mm} and~\eqref{pb-psigma-mmm_c1} are equivalent problems in the following sense:
\begin{enumerate}
	\item 
		If $((\zvec,\pvec,\sigma),(\mat{\lambda},\mu))$ solves \eqref{pb-psigma-mmm_c1}, then 
		$(\pvec,\sigma,\mu)$ solve \eqref{pb-psigma-mm}.
	\item 
		If $(\pvec,\sigma,\mu)$ solves \eqref{pb-psigma-mm}, there exists $\mat{\lambda} \in \Lavec$ such that 
		$((\zvec,\pvec,\sigma),(\mat{\lambda},\mu))$, with
		$$
			\zvec := \Lvec^*\pvec + \nabla \sigma,
  		$$
		solves~\eqref{pb-psigma-mmm_c1}.
\end{enumerate}
\end{proposition}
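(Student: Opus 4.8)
The plan is to follow the same two-step pattern used in the proof of Proposition~\ref{prop-equiv_h} and Proposition~\ref{prop-equiv}, namely to verify the two hypotheses of the Brezzi theory for the saddle-point problem \eqref{pb-psigma-mmm_c1}: coercivity of $\hat{\avec}(\cdot\,,\cdot)$ on the kernel of $\hat{\bvec}(\cdot\,,\cdot)$, and the ``inf-sup'' condition for $\hat{\bvec}(\cdot\,,\cdot)$ on $(\Zvec\times\tilde\Phivecc\times\Lavec)\times\tilde M$. First I would introduce the kernel
\[
	\mathbf{N}:=\{\,(\zvec,\pvec,\sigma)\in \Zvec\times\tilde\Phivecc:\ \hat{\bvec}((\zvec,\pvec,\sigma),(\mat{\lambda}',\mu'))=0\ \ \forall(\mat{\lambda}',\mu')\in\Lavec\times\tilde M\,\}
\]
and observe that, taking $\mu'=0$ and $\mat{\lambda}'$ arbitrary, any element of $\mathbf{N}$ satisfies $\zvec=\Lvec^*\pvec+\nabla\sigma$, while taking $\mat{\lambda}'=\0vec$ forces $\nabla\cdot\pvec=0$, so that $(\pvec,\sigma)\in\Phivecc$ and $\mathbf{N}$ is isometrically identified with $\Phivecc$. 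On $\mathbf{N}$ one then computes, exactly as in the heat case,
\[
	\hat{\avec}((\zvec,\pvec,\sigma),(\zvec,\pvec,\sigma))=\iint_{Q_T}\!\bigl(\rho^{-2}|\zvec|^2+\rho_0^{-2}|\pvec|^21_\omega\bigr)\,d\xvec\,dt
	\geq{1\over2}\iint_{Q_T}\!\bigl(\rho^{-2}|\zvec|^2+\rho^{-2}|\Lvec^*\pvec+\nabla\sigma|^2\bigr)\,d\xvec\,dt+\iint_{q_T}\!\rho_0^{-2}|\pvec|^2\,d\xvec\,dt,
\]
which, via the Carleman estimate of Proposition~\ref{prop-Carleman-Stokes} controlling the full $\Phivecc$-norm, dominates $c\,\|(\zvec,\pvec,\sigma)\|^2_{\Zvec\times\tilde\Phivecc}$; hence $\hat{\avec}$ is coercive on the kernel.

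Next I would establish the inf-sup condition for $\hat{\bvec}$. Given $(\mat{\lambda},\mu)\in\Lavec\times\tilde M$, the strategy is to build a test triple $(\zvec^0,\pvec^0,\sigma^0)$ that picks up both contributions in \eqref{defhatb} with good norms. For the multiplier $\mat{\lambda}$ one takes, as in Section~\ref{Sec-mixed_S1}, $\zvec^0:=-\rho^2\mat{\lambda}$, $\pvec^0=\0vec$, $\sigma^0=0$, which gives $\iint_{Q_T}[\zvec^0-(\Lvec^*\pvec^0+\nabla\sigma^0)]\cdot\mat{\lambda}=-\|\mat{\lambda}\|^2_{\Lavec}$ and $\|\zvec^0\|_{\Zvec}\leq C\|\mat{\lambda}\|_{\Lavec}$. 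For the multiplier $\mu$ one uses the construction already invoked in the proof of Proposition~\ref{prop-equiv}: solve $\Lvec^*\pvec+\nabla\sigma=\0vec$, $\nabla\cdot\pvec=\mu$, $\pvec|_{\Sigma_T}=\0vec$, $\pvec(\cdot,T)=\0vec$ to obtain $(\pvec,\sigma)\in\tilde\Phivecc$ with $\nabla\cdot\pvec=\mu$, $\Lvec^*\pvec+\nabla\sigma=\0vec$ (so the $\zvec$-slot is zero) and $\|(\pvec,\sigma)\|_{\tilde\Phivecc}\leq C\|\mu\|_{\tilde M}$ by the transposition estimate from~\cite{RAYMOND}. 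Combining these two pieces with a suitable sign yields a single triple testing both terms, whence
\[
	\sup_{(\zvec,\pvec,\sigma)}{\hat{\bvec}((\zvec,\pvec,\sigma),(\mat{\lambda},\mu))\over\|(\zvec,\pvec,\sigma)\|_{\Zvec\times\tilde\Phivecc\times\Lavec}}\geq{1\over C}\bigl(\|\mat{\lambda}\|_{\Lavec}+\|\mu\|_{\tilde M}\bigr),
\]
giving the inf-sup condition; existence and uniqueness for \eqref{pb-psigma-mmm_c1} then follow from the Brezzi–Fortin theorem (see \cite{BrezziFortin, rob-tho}).

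Finally, for the equivalence statements I would argue directly. If $((\zvec,\pvec,\sigma),(\mat{\lambda},\mu))$ solves \eqref{pb-psigma-mmm_c1}, the second equation with $\mu'=0$ gives $\zvec=\Lvec^*\pvec+\nabla\sigma$ (in $\mathscr{D}'(Q_T)$, hence in $\Zvec$), so the first equation, restricted to test triples of the form $(\Lvec^*\pvec'+\nabla\sigma',\pvec',\sigma')$ and dropping the now-vanishing $\mat{\lambda}$-term, reduces exactly to the first equation of \eqref{pb-psigma-mm}, while the remaining part of the second equation gives $\iint_{Q_T}(\nabla\cdot\pvec)\mu'=0$; thus $(\pvec,\sigma,\mu)$ solves \eqref{pb-psigma-mm}. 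Conversely, given a solution $(\pvec,\sigma,\mu)$ of \eqref{pb-psigma-mm}, set $\zvec:=\Lvec^*\pvec+\nabla\sigma$; then the second equation of \eqref{pb-psigma-mmm_c1} holds by construction (the $\zvec$-part vanishes identically and the $\mu$-part is the constraint in \eqref{pb-psigma-mm}), and one recovers $\mat{\lambda}$ from the first equation of \eqref{pb-psigma-mmm_c1} by the inf-sup property just proved—this is the standard recovery of the Lagrange multiplier, and it is the only slightly delicate point, since one must check that the linear functional
\[
	(\zvec',\pvec',\sigma')\longmapsto \langle\mat{\hat{\ell}},(\zvec',\pvec',\sigma')\rangle-\hat{\avec}((\zvec,\pvec,\sigma),(\zvec',\pvec',\sigma'))-\iint_{Q_T}(\nabla\cdot\pvec')\mu\,d\xvec\,dt
\]
vanishes on the kernel $\mathbf{N}$, which is precisely the content of \eqref{pb-psigma-mm}. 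I expect this multiplier-recovery step (ensuring the right-hand side annihilates the kernel, so that the closed-range/inf-sup argument produces $\mat{\lambda}$) to be the main—though routine—obstacle; everything else is a transcription of the arguments already given for the heat equation and for \eqref{pb-psigma-mm}.
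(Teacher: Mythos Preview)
Your proposal is correct and follows essentially the same approach as the paper: verify coercivity of $\hat{\avec}$ on the kernel and the inf--sup condition for $\hat{\bvec}$, then invoke Brezzi--Fortin. Two minor remarks: (i) on the kernel the coercivity is immediate from the definition of the $\Zvec\times\tilde\Phivecc$-norm (no appeal to Proposition~\ref{prop-Carleman-Stokes} is needed, since once $\zvec=\Lvec^*\pvec+\nabla\sigma$ and $\nabla\cdot\pvec=0$ one has $\hat{\avec}=\tfrac12\|\cdot\|^2_{\Zvec\times\tilde\Phivecc}+\tfrac12\iint_{q_T}\rho_0^{-2}|\pvec|^2$); (ii) in the inf--sup construction the paper takes $\zvec=\rho^2\mat{\lambda}$ (not $-\rho^2\mat{\lambda}$), because in $\hat{\bvec}$ the term is $[\zvec-(\Lvec^*\pvec+\nabla\sigma)]\cdot\mat{\lambda}$, with $\zvec$ first---the sign you copied from Proposition~\ref{prop-equiv_h} is flipped there since $\beta$ had $L^*p-z$. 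With that correction, a single triple $(\rho^2\mat{\lambda},\pvec,\sigma)$, where $(\pvec,\sigma)$ solves the backward Stokes system with divergence $\mu$, yields $\hat{\bvec}=\|(\mat{\lambda},\mu)\|^2_{\Lavec\times\tilde M}$ directly, which is slightly cleaner than combining two pieces.
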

\begin{proof}
Let us introduce the space
$$
	\hat{\Psivecc}=\{\,(\zvec,\pvec,\sigma)\in \Zvec \times \tilde\Phivecc :
	\hat\bvec((\zvec,\pvec,\sigma),(\mat{\lambda},\mu)) = 0\quad \forall (\mat{\lambda},\mu)\in \Lavec \times \tilde M \,\}
$$
	and, as before, let us check that
\begin{itemize}
	\item 
		$\hat\avec(\cdot\,,\cdot)$ is coercive in $\hat{\Psivecc}$.
	\item 
		$\hat\bvec(\cdot\,,\cdot)$ satisfies the usual ``inf-sup" condition in $(\Zvec \times \tilde\Phivecc) \times (\Lavec \times \tilde M)$.
\end{itemize}

	Again, the proofs of these assertions are easy. Indeed,  for any 
	$(\zvec,\pvec,\sigma)\in \hat{\Psivecc}$, $\zvec=\Lvec^*\pvec+\nabla \sigma$ and $\nabla\cdot \pvec=0$ and,
	 therefore,
\[
	\begin{alignedat}{2}
		\noalign{\smallskip}\dis
		\hat{\avec}((\zvec,\pvec,\sigma),(\zvec,\pvec,\sigma))	
		=&~\iint_{Q_T} \left( \rho^{-2}|\zvec|^2 + \rho_0^{-2}|\pvec|^2\,1_\om\right)\,d\xvec\,dt	\\
		\noalign{\smallskip}\dis
		=&~{1\over2}\|(\zvec,\pvec,\sigma)\|^2_{\Zvec \times \tilde\Phivecc}
		+{1\over2}\iint_{Q_T}\rho_0^{-2}|\pvec|^21_\om\,d\xvec\,dt							\\
		\noalign{\smallskip}\dis
		\geq&~{1\over2}\|(\zvec,\pvec,\sigma)\|^2_{\Zvec \times \tilde\Phivecc},
	\end{alignedat}
\]
	whence $\hat\avec(\cdot\,,\cdot)$ is coercive in $\hat{\Psivecc}$. 
	
	On the other hand, the ``inf-sup'' condition is a consequence of the fact that, for any $(\mat{\lambda},\mu)\in \Lavec \times \tilde M$,
	 there exists $(\zvec,\pvec,\sigma)\in \Zvec \times \tilde\Phivecc$ such that 
\begin{equation}\label{inf_sup_div}
	\hat{\bvec}((\zvec,\pvec,\sigma),(\mat{\lambda},\mu))=\|(\mat{\lambda},\mu)\|^2_{\Lavec \times \tilde M} \quad\hbox{and} 
	\quad\|(\zvec,\pvec,\sigma)\|_{\Zvec \times \tilde\Phivecc}\leq C \|(\mat{\lambda},\mu)\|_{\Lavec \times \tilde M}.
\end{equation}
	This time, the argument is as follows: for any fixed $(\mat{\lambda},\mu)\in \hat{\Yvec}$, let $(\pvec,\sigma)$ be the solution to
\[
\Lvec^*\pvec+\nabla \sigma=\0vec \,~ \textrm{in}\,~ Q_T,\quad \nabla\cdot\pvec=\mu \,~ \textrm{in}\,~ Q_T, \quad \pvec=0\,~\textrm{ on }~\,\Sigma_T, \quad \pvec(\cdot,T)=\0vec \,~ \textrm{in}\,~ \Omega;
\]
	then $\pvec$ belongs to $\Lvec^2(Q_T)$ and  
\begin{equation}
	\Vert \pvec\Vert_{\Lvec^2(Q_T)} \leq  C\Vert \mu\Vert_{L^2(Q_T)}.
\end{equation}
	Taking $\zvec=\rho^2\mat{\lambda}$, one arrives easily at~\eqref{inf_sup_div}.
\end{proof}


\subsection{Another formulation related to \eqref{reform_pb-psigma-s}}\label{Sec-mixed_S4}

	Let us introduce the spaces
\[
	\begin{alignedat}{2}
		\Phivecc^*:=~&\{\, (\pvec,\sigma) : \,\dis \iint_{Q_T}\left[\rho_2^{-2}|\pvec_t|^2+\rho_1^{-2}|\nabla \pvec|^2
		+\rho_0^{-2}|\pvec|^2+\rho^{-2} |\nabla\sigma|^2\right]d\xvec\,dt< +\infty,							      \\
		&  \,~\nabla\cdot\pvec \equiv 0,~\pvec=\0vec \ \text{ on } \ \Sigma_T \,\},   						      \\
		\noalign{\smallskip}
		\Lavec^*:=~&\{\, \mat{\lambda}:\ \dis\iint_{Q_T}\left[ \rho_2^{2}|\mat{\lambda}|^2 +\rho_1^{2}
		|\nabla \mat{\lambda}|^2\right]d\xvec\,dt< +\infty,~\mat{\lambda} = \0vec \ \text{ on } \ \Sigma_T \,\},
	\end{alignedat}
\]
	the bilinear forms $\avec^*(\cdot\,,\cdot)$ and~$\bvec^*(\cdot\,,\cdot)$, with
$$
	\avec^*((\zvec,\pvec,\sigma),(\zvec',\pvec',\sigma')) := \iint_{Q_T} \left( \rho^{-2}\zvec\cdot\zvec' 
	+ \rho_0^{-2}\pvec\cdot\pvec'1_\omega  \right)\,d\xvec\,dt
$$
	and
\[
	\bvec^*((\zvec,\pvec,\sigma),\mat{\lambda}):=\dis\iint_{Q_T} 
	\left\{\left[\zvec+\pvec_t-\nabla\sigma\right] \cdot \mat\lambda - \nu\nabla \pvec\cdot \nabla\mat\lambda\right\}\,d\xvec\,dt\\
\]
	and the linear form $\mat{\ell^*}$, with
$$
	\langle\mat{\ell^*},(\zvec,\pvec,\sigma) \rangle := \int_\Om  \yvec_0(\xvec) \cdot \pvec(\xvec,0)\,d\xvec.
$$

	The bilinear form $\bvec^*(\cdot,\cdot)$ appears when we integrate by parts the second--order terms in~$\bvec(\cdot,\cdot)$,
	see \eqref{b_mix_s_1}.
   	Accordingly, at least formally, we can reformulate~\eqref{reform_pb-psigma-s} as follows:
\begin{equation}\label{pb-psigma-mmm_ref}
	\left\{
		\begin{array}{l}
			\dis \avec^*((\zvec,\pvec,\sigma),(\zvec',\pvec',\sigma')) + \bvec^*((\zvec',\pvec',\sigma'),\mat{\lambda}') 
			= \langle \mat{\ell^*},(\zvec',\pvec',\sigma') \rangle,	\\ 
			\noalign{\smallskip}
			\dis  \bvec^*((\zvec,\pvec,\sigma),\mat{\lambda}) = 0,															\\ 
			\noalign{\smallskip}
			\dis \qquad \forall ((\zvec',\pvec',\sigma'),\mat{\lambda}') \in \Zvec \times \Phivecc^* \times \Lavec^*; \ 
			((\zvec,\pvec,\sigma),\mat{\lambda}) \in \Zvec \times \Phivecc^* \times \Lavec^*.
		\end{array}
	\right.
\end{equation}

   	This can be viewed as a new mixed formulation of~\eqref{pb-psigma-s}. However, that these two problems are equivalent 
	in the sense of Propositions~\ref{prop-equiv} and~\ref{prop-equiv-1} is, at present, an open question.


\subsection{A fifth (and final) mixed formulation}\label{Sec-mixed_S5}

	Finally, let us introduce the space
\[
	\begin{alignedat}{2}
		\overline{\Phivecc}:=~&\{\, (\pvec,\sigma) :\,\, 
		\dis \iint_{Q_T}\!\!\!\!\left[ \rho_2^{-2}|\pvec_t|^2 \!+\!\rho_1^{-2}|\nabla \pvec|^2 \!+\!\rho_0^{-2}|\pvec|^2\!
		+\!|\nabla\cdot\pvec|^2\!+\! \rho^{-2} |\nabla\sigma|^2\right] 
		\,d\xvec\,dt< +\infty, \\
		\noalign{\smallskip}
		&\pvec=\0vec \ \text{ on } \ \Sigma_T \,\},	
	\end{alignedat}
\]
	the bilinear forms $\overline{\avec}(\cdot\,,\cdot)$ and~$\overline{\bvec}(\cdot\,,\cdot)$, with
$$
	\overline{\avec}((\zvec,\pvec,\sigma),(\zvec',\pvec',\sigma')) := \iint_{Q_T} \left( \rho^{-2}\zvec\cdot\zvec' 
	+ \rho_0^{-2}\pvec\cdot\pvec'1_\omega  \right)\,d\xvec\,dt
$$
	and
\[
	\overline{\bvec}((\zvec,\pvec,\sigma),(\lavec,\mu)):=\dis\iint_{Q_T} \left\{\left[\zvec+\pvec_t-\nabla\sigma\right] 
	\cdot \mat\lambda-\nu\nabla \pvec\cdot \nabla\mat\lambda\right\}\,d\xvec\,dt 
	+\iint_{Q_T}\left( \nabla\cdot \pvec \right) \mu\,d\xvec\,dt\\
\]
	and the linear form $\overline{\mat{\ell}}$, with
$$
	\langle \overline{\mat{\ell}},(\zvec,\pvec,\sigma) \rangle := \int_\Om  \yvec_0(\xvec) \cdot \pvec(\xvec,0)\,d\xvec.
$$

   	In accordance with \eqref{pb-psigma-mmm_ref}, it can be accepted that, at least formally, \eqref{pb-psigma-mmm_c1} 
	possesses the following reformulation:
\begin{equation}\label{pb-psigma-mmm}
	\left\{
		\begin{array}{l}
			\dis \overline{\avec}((\zvec,\pvec,\sigma),(\zvec',\pvec',\sigma')) 
			+ \overline{\bvec}((\zvec',\pvec',\sigma'),(\lavec,\mu)) 
			= \langle \overline{\mat{\ell}},(\zvec',\pvec',\sigma') \rangle,						\\ 
			\noalign{\smallskip}
			\dis  \overline{\bvec}((\zvec,\pvec,\sigma),(\lavec',\mu')) = 0,						\\ 
			\noalign{\smallskip}
			\dis \qquad \forall ((\zvec',\pvec',\sigma'),(\mat{\lambda}',\mu')) \in \Zvec \times \overline{\Phivecc} \times \Lavec \times M; \ 
			((\zvec,\pvec,\sigma),(\mat{\lambda},\mu)) \in \Zvec \times \overline{\Phivecc} \times \Lavec \times M.
		\end{array}
	\right.
\end{equation}
\begin{remark}{\rm
	The previous mixed formulations possess several relevant properties:
	
\begin{itemize}

	\item By constructing finite dimensional subspaces of $\Phivecc$, we are led to standard mixed approximations of \eqref{reform_pb-psigma-s}.
	But this is not a simple task:
	recall that, in order to have $(\pvec,\sigma)\in \Phivecc$, we need (among other things) $\Lvec^*\pvec+\nabla\sigma\in \Lvec^2(\rho^{-1};Q_T)$ and $\nabla\cdot\pvec=0$.
	\item  Contrarily, it is relatively easy to construct numerically efficient finite dimensional subspaces of $\Phivec$, for instance, based on the {\it Bell triangle} or the {\it Bogner-Fox-Schmidt rectangle}.
	Consequently, we can get finite element approximations of~\eqref{pb-psigma-mm} for which, furthermore, a convergence analysis can be performed.
	\item The same can be said for \eqref{pb-psigma-mmm_c1}.
	In this case, the fact that the variable $\zvec$ appears explicitly is useful for a direct computation of an approximation of the state.
	\item The mixed formulations \eqref{pb-psigma-mmm_ref} and \eqref{pb-psigma-mmm} share an advantageous characteristic: 
	they can be approximated in a rather standard way by $C^0$ finite elements since, after integration by parts, no second-order spatial derivative appears.
	Unfortunately, up to our knowledge, it is unknown whether or not they are well posed.
	More precisely, the proof of the ``inf-sup'' condition is open and, moreover, the well-posedness of their associated discrete versions is not clear. 
\end{itemize}}
\end{remark}


\subsection{A numerical approximation of \eqref{pb-psigma-mmm} (without justification)}\label{Sec-mixed_S6}

	Let us conserve the notation in~Section~\ref{mixed_heat}.

	For any couple of integers $m, n\geq 1$, we will set
\[
		\begin{array}{l}
			\noalign{\smallskip}\dis
			\overline\Zvec_h(m,n):= \{\, \zvec_h \in \Cvec^0(\overline{Q}_{\kappa,T}) : \zvec_h|_K \in 
			(\mathbb{P}_{m,\xvec} \otimes \mathbb{P}_{n,t})(K) \ \ \forall K \in  {\mathcal Q}_{h} \,\},	\\     
			\noalign{\smallskip}\dis
			\overline\Vvec_h(m,n) := \{\, \pvec_h \in \overline\Zvec_h(m,n) : \pvec_h = \0vec \ \hbox{ on } \ \Sigma_T \,\},	\\
			\noalign{\smallskip}\dis
			\overline M_h(m,n):= \{\, \sigma_h \in C^0(\overline{Q}_{\kappa,T}) : \sigma_h|_K \in  (\mathbb{P}_{m,\xvec} \otimes 
			\mathbb{P}_{n,t})(K) \ \ \forall K \in  {\mathcal Q}_{h} \,\}.
		\end{array}
\]

	Then, $\overline\Zvec_h(m,n)$ and $\overline\Vvec_h(m,n)$ are finite dimensional subspaces of the Hilbert space 
	$\Hvec^1(Q_{\kappa,T})$. Moreover, $\overline\Vvec_h(m,n)\times \overline M_h(m,n)\subset \overline{\Phivecc}$, 
	$\overline\Vvec_h(m,n) \subset \overline{\Lavec}$ and  $\overline M_h(m,n) \subset \tilde{M}$. Therefore, for 
	any $m$, $n$, $m'$, $n'$, $m''$, $n''$, $m'''$, $n'''$, $m''''$, $n'''' \geq 1$, we can define 
$$
	\overline\Xvec_h := \overline\Zvec_h(m,n) \times \overline\Vvec_h(m',n')\times\overline M_h(m'',n'')
	\hbox{ and } \overline\Yvec_h := \overline\Vvec_h(m''',n''') \times\overline M_h(m'''',n''''),
$$ 
	that are finite dimensional subspaces of $\Zvec \times \overline{\Phivecc}$ and $\Lavec \times M$, respectively. 

   	The following mixed approximation of~\eqref{pb-psigma-mmm} makes sense:
\begin{equation}\label{pb-psigma-mm-h}
	\left\{
		\begin{array}{l}
			\dis \overline{\avec}((\zvec_h,(\pvec_h,\sigma_h)),(\zvec'_h,(\pvec'_h,\sigma'_h))) 
			+ \overline{\bvec}((\zvec'_h,(\pvec'_h,\sigma'_h)),(\lavec_h,\mu_h)) 
			= \langle \overline{\mat{\ell}},(\zvec'_h,(\pvec'_h,\sigma'_h)) \rangle,				\\ 
			\noalign{\smallskip}
			\dis  \overline{\bvec}((\zvec_h,(\pvec_h,\sigma_h)),(\lavec'_h,\mu'_h)) = 0,			\\ 
			\noalign{\smallskip}
			\dis \qquad \forall ((\zvec'_h,(\pvec'_h,\sigma'_h)),(\mat{\lambda}'_h,\mu'_h)) 
			\in \overline{\Xvec}_h\times \overline{\Yvec}_h; \ ((\zvec_h,(\pvec_h,\sigma_h)),
			(\mat{\lambda}_h,\mu_h)) \in \overline{\Xvec}_h\times\overline{\Yvec}_h.
		\end{array}
	\right.
\end{equation}


\subsection{A numerical experiment}\label{Num-S-2}

	This Section deals with some numerical results. We have solved \eqref{pb-psigma-mm-h} with the following data: 
	$\Om=(0,1)\times(0,1)$, $\omega=(0.2,0.6)\times(0.2,0.6)$, $T=1$, $K_1=1$, $K_2=2$, $\betamod_0=\betamod_{0}^{(0.5,0.5)}$ 
	(as in Section \ref{sec_heat});
	$\nu=1$,  $\yvec_0(\xvec) \equiv (M,0)$, with $M  = 1000$.


	Again, the computations have been performed with the software {\it Freefem++}, using $P_2$-Lagrange approximations in~$(\xvec,t)$ for all the variables.
	Th domain and the mesh are depicted in~Fig.~\ref{mesh_heat_lin}.
	The linear system in~\eqref{pb-psigma-mm-h} has been solved with the Arrow-Hurwicz algorithm, where we have taken $r=0.01$ and~$s=0.1$.
	The convergence of this algorithm is illustrated in~Table~\ref{Tab-AH-S}, where the first and the second relative errors are given by	
$$
	{ \|(\zvec^{(k+1)}_h,\pvec^{(k+1)}_h,\sigma^{(k+1)}_h) - (\zvec^{(k)}_h,\pvec^{(k)}_h,\sigma^{(k)}_h)\|_{\Lvec^2(Q_T)} 
	\over \|(\zvec^{(k+1)}_h,\pvec^{(k+1)}_h,\sigma^{(k+1)}_h)\|_{\Lvec^2(Q_T)} }
$$
	and
$$
	{ \| (\lavec^{(k+1)}_h,\mu^{(k+1)}_h) - (\lavec^{(k)}_h,\mu^{(k)}_h) \|_{\Lvec^2(Q_T)} \over \| (\lavec^{(k+1)}_h,\mu^{(k+1)}_h) \|_{\Lvec^2(Q_T)} }\,.
$$

	The computed control and state are displayed in Fig.~\ref{control_Stokes_1}--\ref{control_Stokes_4}.
	

\begin{figure}[http]
\begin{center}
\begin{minipage}{0.49\textwidth}
\includegraphics[width=\textwidth]{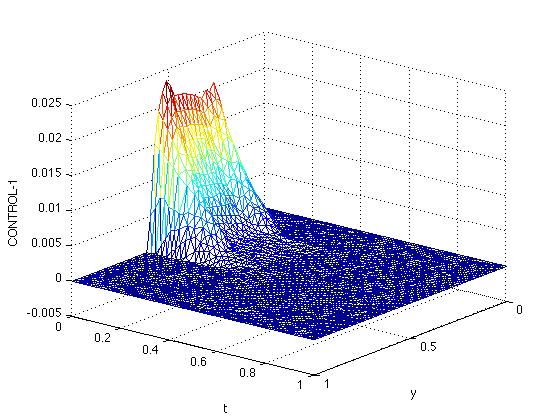}
\includegraphics[width=\textwidth]{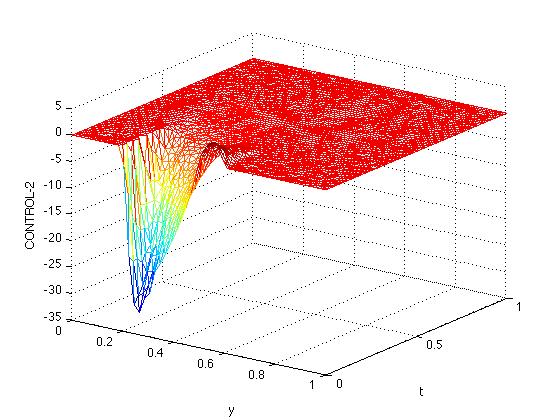}
\end{minipage}
\begin{minipage}{0.49\textwidth}
\includegraphics[width=\textwidth]{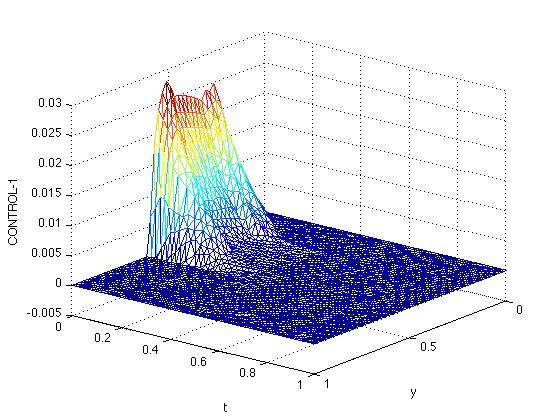}
\includegraphics[width=\textwidth]{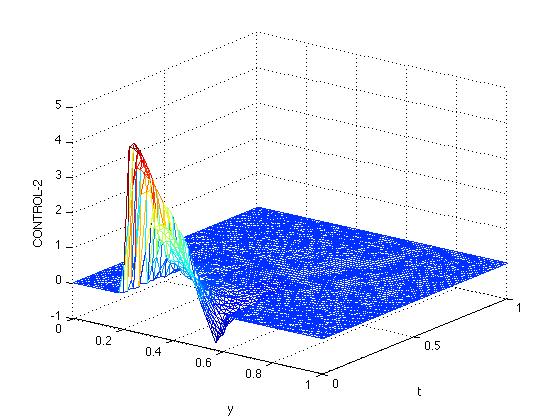}
\end{minipage}
\caption{$\omega=(0.2,0.6)$; $\yvec_0(\xvec)=(1000,0)$. Cuts of $v_{1,h}$  and $10^{10} v_{2,h}$ at $x_1=0.28$  ({\bf Left}) and~$x_1=0.52$ ({\bf Right}).}
\label{control_Stokes_1}
\end{center}
\end{figure}


\begin{figure}[http]
\begin{center}
\begin{minipage}{0.49\textwidth}
\includegraphics[width=\textwidth]{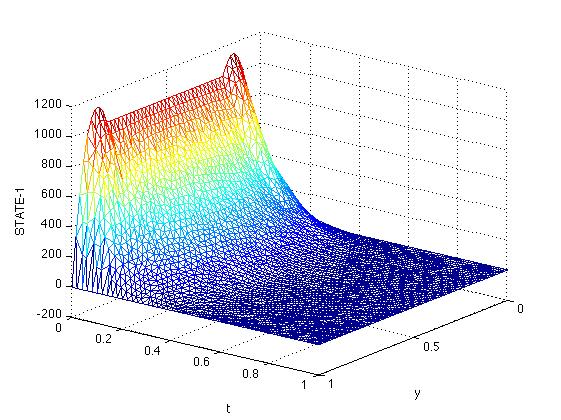}
\includegraphics[width=\textwidth]{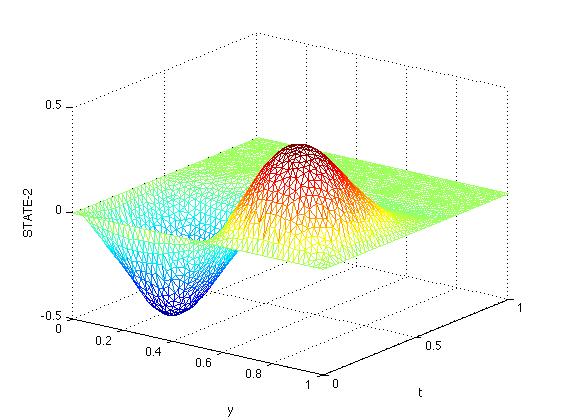}
\end{minipage}
\begin{minipage}{0.49\textwidth}
\includegraphics[width=\textwidth]{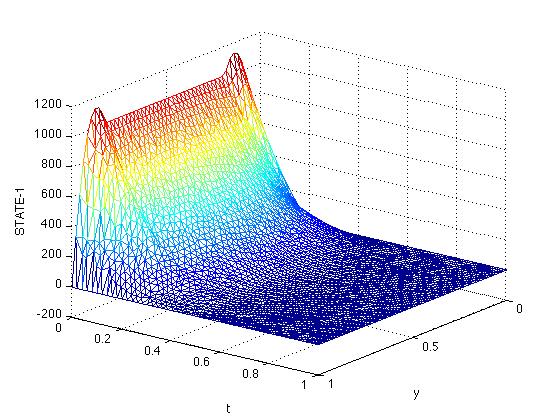}
\includegraphics[width=\textwidth]{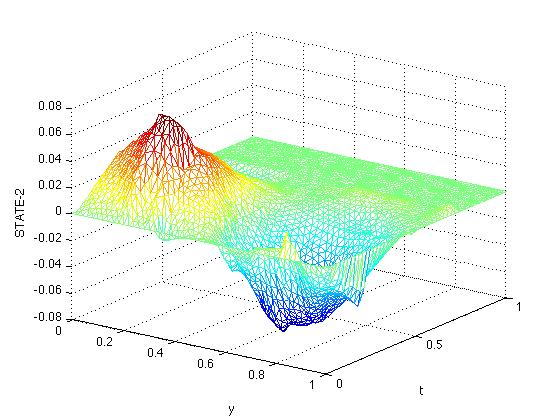}
\end{minipage}
\caption{$\omega=(0.2,0.6)$; $\yvec_0(\xvec)=(1000,0)$. Cuts of $y_{1,h}$  and $10^{10} y_{2,h}$ at $x_1=0.28$  ({\bf Left}) and~$x_1=0.52$ ({\bf Right}).}
\label{control_Stokes_2}
\end{center}
\end{figure}


\begin{figure}[http]
\begin{center}
\begin{minipage}{0.49\textwidth}
\includegraphics[width=\textwidth]{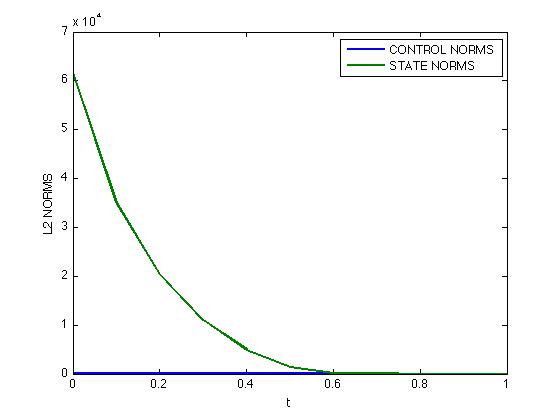}
\end{minipage}
\begin{minipage}{0.49\textwidth}
\includegraphics[width=\textwidth]{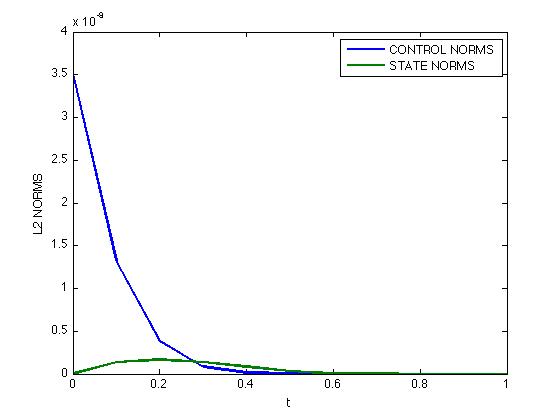}
\end{minipage}
\caption{Evolution of the $L^2$ norms of the first and second components of the control and the state.}
\label{control_Stokes_3}
\end{center}
\end{figure}


\begin{figure}[http]
\begin{center}
\begin{minipage}{0.49\textwidth}
\includegraphics[width=\textwidth]{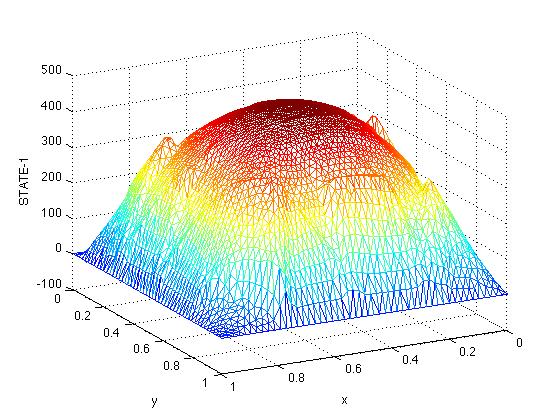}
\includegraphics[width=\textwidth]{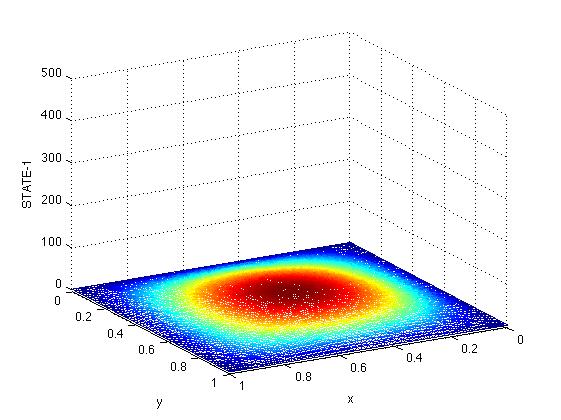}
\includegraphics[width=\textwidth]{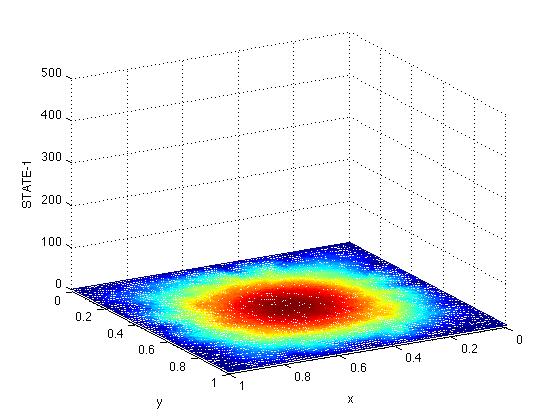}
\end{minipage}
\begin{minipage}{0.49\textwidth}
\includegraphics[width=\textwidth]{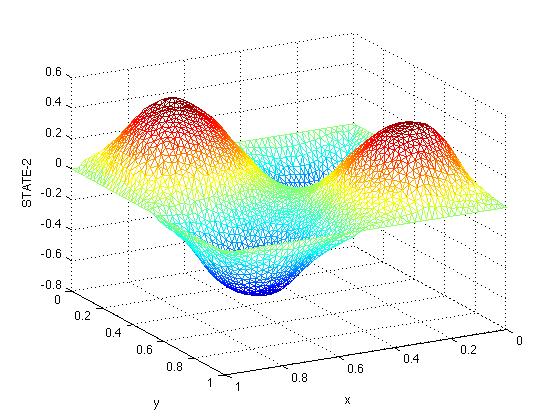}
\includegraphics[width=\textwidth]{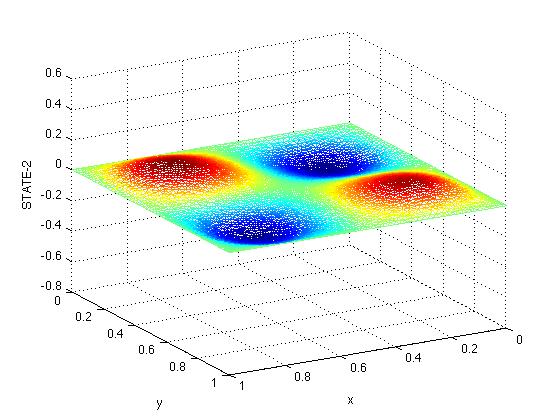}
\includegraphics[width=\textwidth]{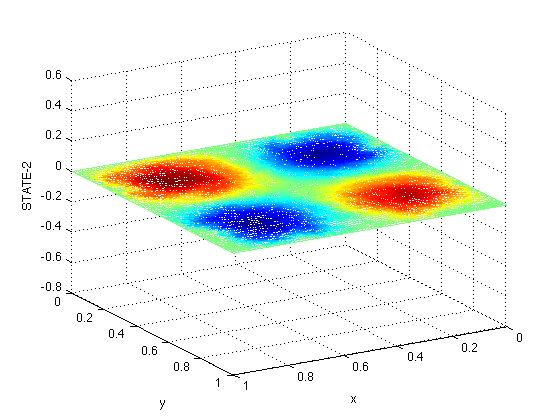}
\end{minipage}
\caption{Evolution of the state at times $t=0.2$, $t=0.5$ and $t=0.8$. Cuts of $y_{1,h}$ ({\bf Left}) and~$10^{10} y_{2,h}$ ({\bf Right}).}
\label{control_Stokes_4}
\end{center}
\end{figure}


\begin{table}[http]
\centering
\begin{tabular}{|c|c|c|}
\hline
Iterate  & Rel.\ error 1 & Rel.\ error 2 \tabularnewline
\hline 
1             & $0.659686$      &  $0.202439$  \tabularnewline\hline 
10           & $0.063864$      &  $0.106203$  \tabularnewline\hline 
20           & $0.016147$      &  $0.076083$  \tabularnewline\hline 
30           & $0.008874$      &  $0.046212$  \tabularnewline\hline 
40           & $0.000464$      &  $0.001397$  \tabularnewline\hline 
50           & $0.000206$      &  $0.000762$  \tabularnewline\hline 
\hline
\end{tabular}
\caption{The behavior of the Arrow-Hurwicz algorithm for~\eqref{pb-psigma-mm-h}.}\label{Tab-AH-S}
\end{table}



\section{An application\,: numerical local exact controllability to the trajectories of the Navier-Stokes equations}
\label{Sec_Navier_Stokes}

   	In this Section, we will present a numerical method for the computation of a solution to the local exact controllability problem to the trajectories of \eqref{N-stokes}.
	This controllability property was proved in~\cite{FC-G-P} under suitable regularity assumptions on the trajectories.
	More precisely, we have to assume that the trajectory satisfies
\begin{equation}\label{trajec_hyp}
	\overline{\yvec} \in L^2(0,T;D(\Avec))\cap C^0([0,T];\Vvec)\cap\Lvec^{\infty}(Q_T),
	\quad \overline{\yvec}_t\in L^2\bigl(0,T;\Hvec\bigl),
\end{equation}
	where $D(\Avec):=\Hvec^2(\Om)\cap \Vvec$ is the domain of the usual Stokes operator $\Avec$; see also~\cite{IMANU} for a previous result.


\subsection{A fixed-point algorithm and a mixed formulation}
	
	First of all, let us rewrite the local exact controllability to the trajectories as a null controllability problem.
	To do this, let us put $\yvec=\overline\yvec + \uvec$ and $\pi = \overline \pi + q$ and let us use \eqref{N-stokes}.
	Taking into account that $(\overline \yvec, \overline \pi)$ solves~\eqref{TRAJEC}, we find:
\begin{equation}\label{N-S-REFOR}
	\left\{
		\begin{array}{lcl}
     			\uvec_t-\nu\Delta \uvec+(\uvec\cdot\nabla)\overline\yvec+((\overline\yvec+\uvec)\cdot\nabla)\uvec
			+\nabla q=\vvec1_\om						&\hbox{in}&	Q_T,	\\
    			\nabla \cdot \uvec = 0      						&\hbox{in}&	Q_T, 	\\
     			\uvec = \0vec 						         		&\hbox{on}&	\Sigma_T, 	\\
     			\uvec(0) = \uvec_0 :=\yvec_0-\overline{\yvec}_0               	&\hbox{in}&	\Om.
		\end{array}
	\right.
\end{equation}

	This way, we have reduced our problem to a local null controllability result for the solution $(\uvec,q)$ to the nonlinear 
	problem \eqref{N-S-REFOR}.
	
	Let us suppose that $\uvec_0\in D(\Avec^{\sigma})$, with $1/2<\sigma<1$ ($\Avec^{\sigma}$ is the fractional
	power of the Stokes operator) and let us introduce the fixed-point mapping $F: \Wvec\mapsto \Wvec$, where 
$$
	\Wvec:=\left\{\, \uvec\in \Lvec^\infty(Q_T) : \nabla \cdot \uvec = 0 \hbox{ in } Q_T,
	\ \uvec\cdot\nvec=0\hbox{ on }\Sigma_T\,\right\}.
$$
	Here, for any $\wvec\in \Wvec$,~$\uvec=F(\wvec)$ is, together with some $\vvec$ and $q$, the unique 
	solution to the extremal problem
\begin{equation}\label{variational_F-I-O}
			\left\{
			\begin{array}{l}
			\dis \hbox{Minimize } \ J(\wvec;\uvec,\vvec)=  
			{1\over2}\iint_{Q_T}  \rho^2|\uvec|^2\,d\xvec\,dt + {1\over2}\iint_{q_T}\rho_0^2|\vvec|^2 \,d\xvec\,dt
			\\ \noalign{\smallskip}
			\dis \hbox{Subject to } \ \vvec \in L^2(q_T), \ (\uvec,q,\vvec) \ \text{satisfies} \ \eqref{N-S-REFOR-LL},
			\end{array}
			\right.
\end{equation}
	where \eqref{N-S-REFOR-LL} reads as follows:
\begin{equation}\label{N-S-REFOR-LL}
	\left\{
		\begin{array}{lcl}
     			\uvec_t-\nu\Delta \uvec+(\uvec\cdot\nabla)\overline\yvec+((\overline\yvec+\wvec)\cdot\nabla)\uvec
			+\nabla q=\vvec1_\om  				&\hbox{in}&	Q_T,	\\
    			\nabla \cdot \uvec = 0                     			&\hbox{in}&	Q_T,   	\\
     			\uvec = \0vec 					     	&\hbox{on}&	\Sigma_T,  	\\
     			\uvec(0) = \uvec_0                                     		&\hbox{in}& 	\Om.
		\end{array}
	\right.
\end{equation}
   	It is again assumed that the weights $\rho$ and $\rho_0$ satisfy~\eqref{weights-0}.
		
	We have:
\begin{theorem}\label{th-2_o}
   	For any $\uvec_0 \in D(\Avec^{\sigma})$ and $T > 0$, there exists exactly one solution 
	to~\eqref{variational_F-I-O}-\eqref{N-S-REFOR-LL}.
\end{theorem}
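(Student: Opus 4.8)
The plan is to view \eqref{variational_F-I-O}--\eqref{N-S-REFOR-LL} (with $\wvec\in\Wvec$ fixed) as the minimization of a strictly convex, continuous and coercive quadratic functional over a nonempty closed convex subset of a Hilbert space, so that existence and uniqueness follow from the projection theorem. Concretely, I would introduce the Hilbert space
$$
\mathcal{V}:=\{\,(\uvec,\vvec):\ \rho\,\uvec\in\Lvec^2(Q_T),\ \rho_0\,\vvec\in\Lvec^2(q_T)\,\},\qquad
\|(\uvec,\vvec)\|_{\mathcal V}^2:=\iint_{Q_T}\rho^2|\uvec|^2\,d\xvec\,dt+\iint_{q_T}\rho_0^2|\vvec|^2\,d\xvec\,dt,
$$
so that $J(\wvec;\uvec,\vvec)=\tfrac12\|(\uvec,\vvec)\|_{\mathcal V}^2$, and let $\mathcal S_\wvec(\uvec_0,T)\subset\mathcal V$ be the set of pairs $(\uvec,\vvec)$ for which \eqref{N-S-REFOR-LL} holds for some associated pressure $q$. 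Since $\rho$ blows up at $t=T$, any $(\uvec,\vvec)\in\mathcal S_\wvec(\uvec_0,T)$ automatically satisfies $\uvec(\cdot,T)=\0vec$, so minimizing $J(\wvec;\cdot,\cdot)$ over $\mathcal S_\wvec(\uvec_0,T)$ is the same as minimizing the $\mathcal V$-norm over that set. As the functional is exactly one half of the squared norm of $\mathcal V$, it is strictly convex, continuous and coercive on all of $\mathcal V$; it therefore suffices to check that $\mathcal S_\wvec(\uvec_0,T)$ is nonempty, closed and convex, and the unique solution is then the $\mathcal V$-projection of the origin onto $\mathcal S_\wvec(\uvec_0,T)$.

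Convexity is immediate: for fixed $\wvec$, the system \eqref{N-S-REFOR-LL} is \emph{linear} in $(\uvec,q,\vvec)$, hence $\mathcal S_\wvec(\uvec_0,T)$ is an affine subspace of $\mathcal V$ (the difference of two elements solves the homogeneous system with $\uvec_0=\0vec$). For closedness I would use that the coefficients appearing in \eqref{N-S-REFOR-LL} — namely $\overline\yvec$, $\nabla\overline\yvec$ and $\overline\yvec+\wvec$ — are bounded or have good integrability thanks to \eqref{trajec_hyp} and $\wvec\in\Wvec\subset\Lvec^\infty(Q_T)$; standard energy estimates then provide, for each $\delta>0$, a uniform bound on the solution in $C^0([0,T-\delta];\Hvec)\cap L^2(0,T-\delta;\Vvec)$ in terms of $\uvec_0$ and $\vvec$, which allows one to pass to the limit in the weak formulation of \eqref{N-S-REFOR-LL} along any sequence converging in $\mathcal V$. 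Alternatively, being affine and convex, $\mathcal S_\wvec(\uvec_0,T)$ is weakly closed once it is sequentially closed, and minimizing sequences are $\mathcal V$-bounded, so the direct method applies just as well.

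The crux is \textbf{nonemptiness} of $\mathcal S_\wvec(\uvec_0,T)$, i.e. the existence of a null control $\vvec$ for the linearized system \eqref{N-S-REFOR-LL} with the additional weighted integrability $\rho_0\vvec\in\Lvec^2(q_T)$ and $\rho\,\uvec\in\Lvec^2(Q_T)$. This is precisely the output of the Fursikov--Imanuvilov scheme: one writes the adjoint of the linearized operator in \eqref{N-S-REFOR-LL} and invokes the global Carleman estimate for that adjoint (with weights of the form \eqref{weights-0} for suitably large $K_1,K_2$), valid for Stokes-type operators with bounded convective coefficients — exactly the inequality proved in \cite{FC-G-P,IMANU} under the hypotheses \eqref{trajec_hyp}. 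From it one builds the control in the usual way: minimize a penalized dual functional, extract uniform bounds from the Carleman estimate, and let the penalization parameter tend to zero, obtaining $(\uvec,\vvec)\in\mathcal V$ with $\uvec(\cdot,T)=\0vec$; the hypothesis $\uvec_0\in D(\Avec^{\sigma})$ with $1/2<\sigma<1$ (which embeds into $\Lvec^\infty(\Om)$ in dimension two) provides the regularity of $\uvec_0$ needed for this construction to close. The main obstacle is thus entirely concentrated in this Carleman/observability step, which we are entitled to quote; once $\mathcal S_\wvec(\uvec_0,T)\neq\emptyset$, existence and uniqueness of the minimizer follow from the abstract considerations above — uniqueness in particular being forced by the strict convexity of $J(\wvec;\cdot,\cdot)$.
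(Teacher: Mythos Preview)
Your proposal is correct and rests on the same key ingredient as the paper --- the Carleman estimate for the adjoint Oseen system (Proposition~\ref{prop-Carleman-Oseen}) --- but the packaging differs. The paper does not argue via the direct method on the primal side; instead, following the pattern already set for the heat and Stokes cases (Theorems~\ref{th-optimality_h} and~\ref{th-optimality}), it introduces the bilinear form $\mvec(\wvec;\cdot,\cdot)$ on $\Phivecc_0$, completes to a Hilbert space $\Phivecc^{\wvec}$, solves the dual variational equality~\eqref{pb-psigma_NS} by Lax--Milgram (continuity of the right-hand side being a consequence of the Carleman estimate, exactly as in~\eqref{cont-C0}), and then \emph{defines} the minimizer explicitly through~\eqref{optimal-yv}. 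Your route --- nonemptiness of the admissible set via Carleman, then projection onto an affine subspace --- yields existence and uniqueness just as well, and is arguably more elementary; what the paper's route buys is the explicit representation $\uvec=\rho^{-2}(\Mvec^*\pvec_\wvec+\nabla\sigma_\wvec)$, $\vvec=-\rho_0^{-2}\pvec_\wvec|_{q_T}$, which is precisely what gets discretized in the subsequent mixed formulations.

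One small inaccuracy: the hypothesis $\uvec_0\in D(\Avec^\sigma)$ is not what makes the Carleman/nonemptiness step close --- for the \emph{linear} Oseen problem with fixed $\wvec\in\Wvec$, $\uvec_0\in\Hvec$ already suffices, just as in Theorem~\ref{th-2_s}. The stronger regularity on $\uvec_0$ is only invoked later, to ensure that the fixed-point map $F$ lands in $\Wvec\subset\Lvec^\infty(Q_T)$.
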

	
	This can be regarded as a consequence of the following {\it Carleman inequality} for Oseen systems
	(the proof can be found in~\cite{Ima-JPP-Yamamoto}):

\begin{proposition}\label{prop-Carleman-Oseen} 
	For all $R>0$, the function $\betamod_0$ and the associated weights  $\rho$, $\rho_0$ and $\rho_1$ furnished 
	by~Proposition~\ref{prop-Carleman_h} can be chosen such that, for some $C$, only depending on $\Om$, $\om$, 
	$T$ and $R$, and for all $\wvec\in \Wvec$ with $\|\wvec\|_{\Lvec^\infty(Q_T)}\leq R$, one has:
\begin{equation}\label{Carleman-ineq_n-s}
	\begin{array}{c}
		\dis \iint_{Q_T} \!\!\!\!\!\left( \rho_1^{-2} |\nabla\pvec|^2 
		\!+\! \rho_0^{-2} |\pvec|^2 \!+\! \rho^{-2} |\nabla\sigma|^2 \right)d\xvec\,dt 
		\!\leq\! C \!\left( \iint_{Q_T}\!\!\!\!\! \rho^{-2} |\Mvec^*\pvec
 		\!+\! \nabla\sigma|^2\,d\xvec\,dt \!+\!\! \iint_{q_T}\!\!\!\! \rho_0^{-2} |\pvec|^2 \,d\xvec\,dt \right)
	\end{array}
\end{equation}
	for all $(\pvec,\sigma) \in \Phivecc_0$.
	Here, we have used the notation
\[
	\quad \Mvec^*\pvec = -\pvec_t -\nu\Delta \pvec -\nabla\pvec\,(\overline\yvec+\wvec) - \nabla\pvec^t\,\overline\yvec,
	\quad \Mvec\uvec = \uvec_t -\nu\Delta \uvec+(\uvec\cdot\nabla)\overline\yvec+((\overline\yvec+\wvec)\cdot\nabla)\uvec.
\]
\end{proposition}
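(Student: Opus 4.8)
The plan is to obtain \eqref{Carleman-ineq_n-s} from the Stokes Carleman inequality of Proposition~\ref{prop-Carleman-Stokes} by regarding the Oseen operator $\Mvec^*$ as a first-order perturbation of $\Lvec^*$. Directly from the definitions,
\[
\Lvec^*\pvec + \nabla\sigma = \Mvec^*\pvec + \nabla\sigma + B\pvec, \qquad B\pvec := \nabla\pvec\,(\overline\yvec+\wvec) + \nabla\pvec^t\,\overline\yvec ,
\]
so that, for any $(\pvec,\sigma)\in\Phivecc_0$, the source term on the right of \eqref{Carleman-ineq_stokes} splits as
\[
\iint_{Q_T}\rho^{-2}|\Lvec^*\pvec+\nabla\sigma|^2\,d\xvec\,dt \leq 2\iint_{Q_T}\rho^{-2}|\Mvec^*\pvec+\nabla\sigma|^2\,d\xvec\,dt + 2\iint_{Q_T}\rho^{-2}|B\pvec|^2\,d\xvec\,dt .
\]
Since $\overline\yvec\in\Lvec^\infty(Q_T)$ and $\|\wvec\|_{\Lvec^\infty(Q_T)}\leq R$, the perturbation obeys the pointwise bound $|B\pvec|\leq C_R\,|\nabla\pvec|$ with $C_R := 2\|\overline\yvec\|_{\Lvec^\infty(Q_T)}+R$, and hence the last integral is controlled by $C_R^2\iint_{Q_T}\rho^{-2}|\nabla\pvec|^2\,d\xvec\,dt$. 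The whole matter is thereby reduced to absorbing this gradient term into the left-hand side.

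The absorption is the heart of the argument. The left-hand side of \eqref{Carleman-ineq_stokes} controls $\iint_{Q_T}\rho_1^{-2}|\nabla\pvec|^2$, and by \eqref{weights-0} one has $\rho^{-2}=(T-t)\,\rho_1^{-2}\leq T\,\rho_1^{-2}$, so the perturbation is bounded by $C_R^2\,T\iint_{Q_T}\rho_1^{-2}|\nabla\pvec|^2$. A \emph{fixed} choice of weights does not suffice, because the resulting factor $C\,C_R^2\,T$ in front of $\iint_{Q_T}\rho_1^{-2}|\nabla\pvec|^2$ need not be $\le 1$. The remedy is to use the freedom, granted by the statement, of choosing the weights (that is, the constants $K_1$ and $K_2$) in terms of $R$. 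Indeed, Proposition~\ref{prop-Carleman-Stokes} is the shadow, at a fixed value of the large Carleman parameter, of a parameter-dependent estimate in which the gradient term on the left carries a strictly positive power of that parameter while the perturbation $\iint_{Q_T}\rho^{-2}|B\pvec|^2$ carries none. Choosing the parameter (equivalently $K_1$) large enough, depending on $C_R$, makes the coefficient multiplying $\iint_{Q_T}\rho_1^{-2}|\nabla\pvec|^2$ at most $1/2$, so the perturbation is absorbed into the left-hand side.

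After absorption one is left precisely with the three controlled quantities of \eqref{Carleman-ineq_n-s}, the second-order terms $\rho_2^{-2}(|\pvec_t|^2+|\Delta\pvec|^2)$ appearing on the left of \eqref{Carleman-ineq_stokes} being simply discarded, in agreement with the weaker left-hand side claimed here; the right-hand side now involves $\Mvec^*\pvec+\nabla\sigma$ and the local term in $q_T$, with a constant depending on $\Om$, $\om$, $T$ and $R$, as required. The genuine obstacle is not the perturbation step but the underlying parameter-dependent Carleman estimate for the Stokes adjoint, whose proof rests on pointwise Carleman weights, delicate integrations by parts, and the careful handling of the pressure and boundary contributions; for the Oseen version, with the precise dependence of the constant on $R=\|\wvec\|_{\Lvec^\infty(Q_T)}$ through the convective coefficients, I would rely on the estimates of~\cite{Ima-JPP-Yamamoto} (see also~\cite{FursikovImanuvilov,IMANU} for the Stokes case).
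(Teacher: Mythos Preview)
The paper does not supply its own proof of this proposition: it simply states that ``the proof can be found in~\cite{Ima-JPP-Yamamoto}''. Your proposal therefore goes further than the paper does, by sketching the standard perturbation mechanism (write $\Lvec^*\pvec+\nabla\sigma = \Mvec^*\pvec+\nabla\sigma + B\pvec$ with $|B\pvec|\le C_R|\nabla\pvec|$, then absorb the resulting gradient term by enlarging the Carleman parameter) before also appealing to~\cite{Ima-JPP-Yamamoto} for the underlying parameter-dependent estimate. That strategy is correct and is exactly how such first-order perturbations are handled in the Carleman literature; your observation that Proposition~\ref{prop-Carleman-Stokes} is the fixed-parameter shadow of a family of estimates in which the left-hand gradient term carries an extra positive power of the large parameter is the key point that makes absorption possible.

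One caveat worth flagging: your sketch treats the Oseen estimate as a corollary of the Stokes one via absorption, but the presence of the pressure term $\nabla\sigma$ makes the parameter-dependent Stokes Carleman estimate itself nontrivial (the pressure has to be eliminated or estimated separately, typically via the divergence-free constraint and elliptic regularity), and this is where the real work in~\cite{Ima-JPP-Yamamoto,IMANU} lies. You acknowledge this (``the genuine obstacle is not the perturbation step''), so the proposal is honest about its scope; but be aware that a self-contained proof cannot simply invoke Proposition~\ref{prop-Carleman-Stokes} as stated, since that proposition is already post-parameter-choice and does not display the parameter needed for absorption. In short: your route and the paper's are the same---deferral to~\cite{Ima-JPP-Yamamoto}---with your version adding a correct heuristic layer on top.
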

   		
   	For any $\wvec\in \Wvec$, we will denote by $\mvec(\wvec;\,\cdot\,,\,\cdot\,)$ the following associated bilinear form 
	on~$\Phivecc_0$:
$$
	\mvec(\wvec;(\pvec,\sigma),(\pvec',\sigma')) := \iint_{Q_T}\left(\rho^{-2}(\Mvec^*\pvec+\nabla\sigma)
	\cdot(\Mvec^*\pvec'+\nabla\sigma') +1_\om \rho_0^{-2}\pvec\cdot\pvec' \right)\,d\xvec\,dt ;
$$
	recall that $\Phivecc_0$ is given in~\eqref{defPhi0}.
	
	This bilinear form is a scalar product in $\Phivecc_0$. Let us denote by $\Phivecc^{\wvec}$ the  corresponding completion.
	Then, for a good choice of $\rho$ and $\rho_0$ (the same as above), the solution to~\eqref{variational_F-I-O} can be
	characterized by the identities
\begin{equation}\label{optimal-yv}
	\uvec =  \rho^{-2} (\Mvec^*\pvec_\wvec + \nabla\sigma_\wvec ), \quad \vvec = -\rho_0^{-2}\bigl.\pvec_\wvec\bigr|_{q_T},
\end{equation}
	where $(\pvec_\wvec ,\sigma_\wvec )$ is the solution to a variational equality in the Hilbert space $\Phivecc^{\wvec}$:
\begin{equation}\label{pb-psigma_NS}
	\left\{
		\begin{array}{l}
			\mvec(\wvec;(\pvec_\wvec ,\sigma_\wvec ),(\pvec',\sigma'))=\dis\!\!\int_\Omega \uvec_0(\xvec)
			\cdot\pvec'(\xvec,0)\,d\xvec 														\\
			\noalign{\smallskip}
			\dis \forall (\pvec',\sigma') \in \Phivecc^{\wvec}; \ (\pvec_\wvec ,\sigma_\wvec ) \in \Phivecc^{\wvec}.
		\end{array}
	\right.
\end{equation}

\begin{remark}\label{N-S_remark}{\rm
	Note that, in view of \eqref{Carleman-ineq_n-s}, for any fixed $R>0$, the weights indicated in~Proposition~\eqref{prop-Carleman-Oseen} lead to a family of norms $\mvec(\wvec;\cdot,\cdot)^{1/2}$ that are {\it equivalent} as long as $\|\wvec\|_{\Lvec^\infty(Q_T)}\leq R$.
	Consequently, the associated spaces $\Phivecc^{\wvec}$ are the same for all $\wvec$ with $\|\wvec\|_{\Lvec^\infty(Q_T)}\leq R$.\Fin
	}
\end{remark}

	In order to solve the null controllability problem for \eqref{N-S-REFOR}, it suffices to find a solution to the fixed-point equation
\begin{equation}\label{eq-fp}
	\uvec=F(\uvec),\quad\uvec\in  \Wvec.
\end{equation}

	Moreover, in view of the results in \cite{GBGP}, if $\uvec_0$ is small enough, $F$ is well defined and possesses at least one fixed-point.

	Consequently, a natural strategy is to use the following algorithm:

\vspace{0.3cm}
\noindent\textbf{ALG~2 (Fixed-point):}

\begin{enumerate}

	\item[(i)] 
		Choose $\uvec^0\in \Wvec$.
		
	\item [(ii)]
		Then, for given~$n \!\geq\! 0$~and~$\uvec^n \!\in\! \Wvec$, compute~$\uvec^{n\!+\!1}\!=\!F(\uvec^n)$, i.e.~find the unique solution~$(\uvec^{n+1},\vvec^{n+1})$ to the extremal problem
		\begin{equation}\label{variational_F-I-O-n}
			\left\{
			\begin{array}{l}
			\dis \hbox{Minimize } \ J(\uvec^n;\uvec^{n+1},\vvec^{n+1})=  
			\rho^2|\uvec^{n+1}|^2\,d\xvec\,dt + {1\over2}\iint_{q_T}\rho_0^2|\vvec^{n+1}|^2 \,d\xvec\,dt
			\\ \noalign{\smallskip}
			\dis \hbox{Subject to } \ \vvec^{n+1} \in L^2(q_T), \ (\uvec^{n+1},q^{n+1},\vvec^{n+1}) 
			\ \text{satisfies} \ \eqref{N-S-REFOR-L-n},		
			\end{array}
			\right.
		\end{equation}
where \eqref{N-S-REFOR-L-n} is the following
		\begin{equation}\label{N-S-REFOR-L-n}
			\left\{
				\begin{array}{lll}
     					\uvec^{n+1}_t-\nu\Delta \uvec^{n+1}+(\uvec^{n+1}\cdot\nabla)\overline\yvec
					+((\overline\yvec+\uvec^{n})\cdot\nabla)\uvec^{n+1}+\nabla q^{n+1}=\vvec^{n+1}1_\om  							&\hbox{in}& Q_T,           			\\
    					\nabla \cdot \uvec^{n+1} = 0      					  				         						                     		&\hbox{in}& Q_T, 	          		\\
     					\uvec^{n+1} = \0vec 						          			                 						                     			&\hbox{on}& \Sigma_T,  			\\
     					\uvec^{n+1}(0) = \uvec_0                                        				 		                                                                      					&\hbox{in}& \Om.
				\end{array}
			\right.
		\end{equation}

\end{enumerate}

	This is a classical fixed-point method for~\eqref{eq-fp}.
	We start from a prescribed state $\uvec^0$ and, then, we  solve a null controllability problem for a linear parabolic system at each step.
	This way, we produce a sequence  $\{\uvec^n,\vvec^n\}$ that is expected to converge to a solution to the null controllability problem \eqref{N-S-REFOR}.

	For the numerical solution of the problems \eqref{variational_F-I-O-n}--\eqref{N-S-REFOR-L-n}, we can apply arguments 
	similar to those in Sections~\ref{Sec-mixed_S5} and 
	\ref{Sec-mixed_S6}. Thus, a suitable mixed formulation is:
\begin{equation}\label{pb-psigma-mmm_NS}
	\left\{
		\begin{array}{l}
			\dis \overline{\avec}((\zvec,\pvec,\sigma),(\zvec',\pvec',\sigma')) + \overline{\bvec}((\zvec',\pvec',\sigma'),(\lavec,\mu)) 
			= \langle \overline{\mat{\ell}},(\zvec',\pvec',\sigma') \rangle,	\\ 
			\noalign{\smallskip}
			\dis  \overline{\bvec}((\zvec,\pvec,\sigma),(\lavec',\mu')) = 0,															\\ 
			\noalign{\smallskip}
			\dis \qquad \forall ((\zvec',\pvec',\sigma'),(\mat{\lambda}',\mu')) 
			\in \Zvec \times \overline{\Phivecc} \times \Lambda \times M; \ 
			((\zvec,\pvec,\sigma),(\mat{\lambda},\mu)) \in \Zvec \times \overline{\Phivecc} \times \Lambda \times M.
		\end{array}
	\right.
\end{equation}
where, the spaces $\Zvec$, $\overline{\Phivecc}$, $\Lambda$ and~$M$ and the forms $\overline{\avec}(\cdot\,,\cdot)$, $\overline{\bvec}(\cdot\,,\cdot)$ and~$\overline{\mat{\ell}}$ are defined in~Section~\ref{sec_stokes}.


\subsection{Numerical experiments}\label{Num-S}

	In this Section, we are going to present some numerical experiments concerning the Poiseuille flow $\overline\yvec_{P}$ 
	and the Taylor-Green vortex  $\overline\yvec_{TG}$. In both cases, we try to solve a local exact controllability problem: 
$$
	\overline\yvec(\xvec,T)\equiv\overline\yvec_P(\xvec)\quad\hbox{or}\quad\overline\yvec(\xvec,T)
	\equiv\overline\yvec_{TG}(\xvec,T).
$$
	
	In the case of the Poiseuille flow, we will take the following data: 
	$\Om=(0,5)\times(0,1)$, $\omega=(1,2)\times(0,1)$, $T=2$, $K_1=1$, $K_2=2$, $\betamod_0=\betamod_0^{(1.5,0.5)}$, $\nu=1$, $\overline{\yvec}_P(x_1,x_2):= (4x_2(1-x_2),0)$, $\yvec_0(\xvec):\equiv\overline\yvec_{p}+M(\nabla\times \psi)(\xvec)$ where $\psi(x_1,x_2)\equiv(x_1x_2)^2[(1-x_1)(1-x_2)]^2$ and $M=0.1$.
	Again, the computations have been performed with the software {\it Freefem++}, using $P_2$-Lagrange approximations and the linear systems have been solved with the Arrow-Hurwicz algorithm, with parameters $r=0.01$ and $s=0.1$.
	
	In the case of the Taylor--Green flow, we have taken the same data, except the following:
	$\Om=(0,\pi)\times(0,\pi)$, $\omega=(\pi/3,2\pi/3)\times(\pi/3,2\pi/3)$, $\psi(x_1,x_2)\equiv(x_1x_2)^2[(\pi-x_1)(\pi-x_2)]^2$, $T=1$ and
$$
	\overline{\yvec}_{TG}(x_1,x_2,t):= (\sin(2x_1)\cos(2x_2)e^{-8 t}, - \cos(2x_1)\sin(2x_2)e^{-8 t}).
$$ 
	The same software and the same kind of approximation were considered.
	
	The computational domains and the corresponding triangulations are displayed in Fig.~\ref{mesh_poiseu} and \ref{mesh_taylor}.
	The behavior of the fixed-point iterates is depicted in Table~\ref{Tab-FP}.
	There, the relative error is given by
$$
	{ \| \uvec^{n+1} - \uvec^n \|_{\Lvec^2(Q_T)} \over  \| \uvec^{n+1} \|_{\Lvec^2(Q_T)} } \,.
$$
	The computed controls and states are shown in Fig.~\ref{fig_poiseu_1} and~\ref{fig_poiseu_2} for the Poiseuille test and
	Fig.~\ref{fig_taylor_1}--\ref{fig_taylor_4} for the Taylor-Green test.
	
	Finally, we have tried to clarify the role of the computed null controls and give an idea of their effect.
	Thus,  in~Fig.~\ref{snormP}, we compare the evolution in time of the $L^2$-norms of $\yvec-\overline{\yvec}_{P}$ and $\zvec-\overline{\yvec}_{TG}$, where $\zvec$ is, together with some pressure, the solution to~\eqref{N-stokes}  with $\vvec = 0$ satisfying the same initial conditions.
	 A similar comparison is furnished in~Fig.~\ref{snormTG} for the Taylor-Green flow test.

 
\begin{table}[http]
\centering
\begin{tabular}{|c|c|c|}
\hline
Iterate  & Rel.\ error (P) & Rel.\ error (TG) \tabularnewline
\hline 
1             & $0.499140$      &  $0.622740$          \tabularnewline\hline 
10           & $0.039318$      &  $0.044985$     \tabularnewline\hline 
20           & $0.010562$      &  $0.012376$   \tabularnewline\hline 
30           & $0.003035$      &  $0.003366$   \tabularnewline\hline 
40           & $0.000331$      &  $0.000851$  \tabularnewline\hline 
50           & $0.000122$      &  $0.000209$  \tabularnewline\hline 
\hline
\end{tabular}
\caption{The behavior of ALG~2 (P: Poiseuille, TG: Taylor-Green).}\label{Tab-FP}
\end{table}
	

\begin{figure}[http]
\begin{center}
\begin{minipage}{0.6\textwidth}
\includegraphics[width=\textwidth]{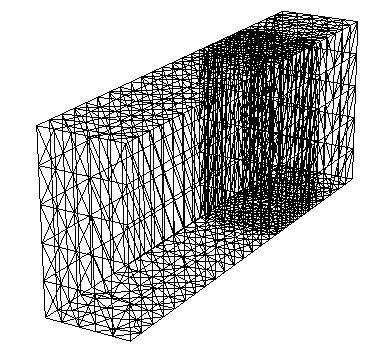}
\end{minipage}
\caption{Poiseuille test -- The domain and the mesh. Number of vertices: 1830. Number of elements (tetrahedra): 7830.
Total number of variables: 12810.}
\label{mesh_poiseu}
\end{center}
\end{figure}


\begin{figure}[http]
\begin{center}
\begin{minipage}{0.49\textwidth}
\includegraphics[width=\textwidth]{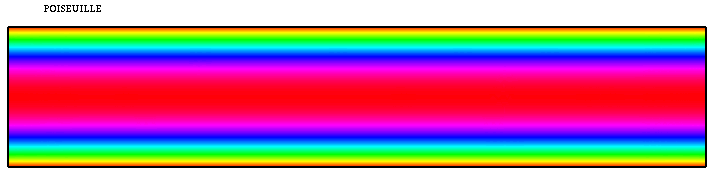}
\includegraphics[width=\textwidth]{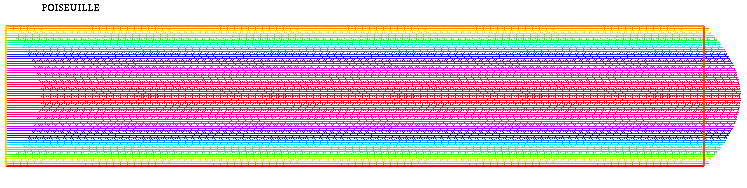}
\end{minipage}
\begin{minipage}{0.49\textwidth}
\includegraphics[width=\textwidth]{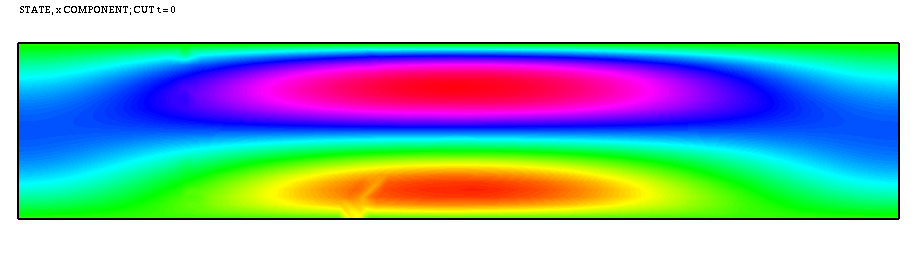}
\includegraphics[width=\textwidth]{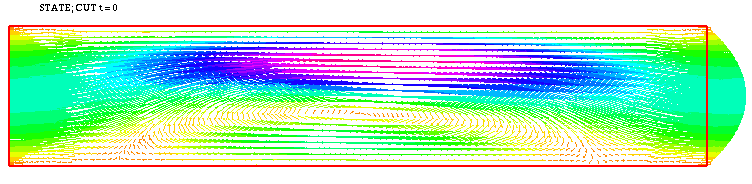}
\end{minipage}
\caption{Poiseuille test -- the target ({\bf Left}) and 
the initial state ({\bf Right}).}
\label{fig_poiseu_1}
\end{center}
\end{figure} 


\begin{figure}[http]
\begin{center}
\begin{minipage}{0.49\textwidth}
\includegraphics[width=\textwidth]{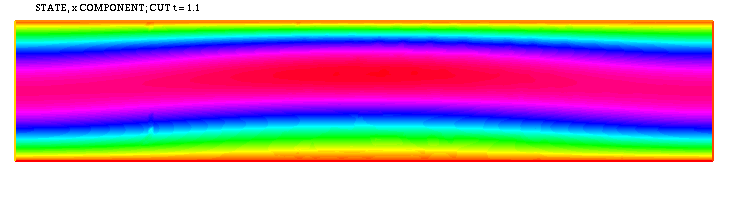}
\includegraphics[width=\textwidth]{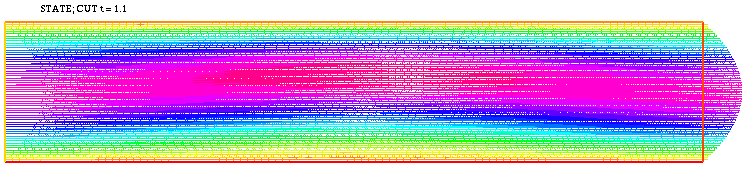}
\end{minipage}
\begin{minipage}{0.49\textwidth}
\includegraphics[width=\textwidth]{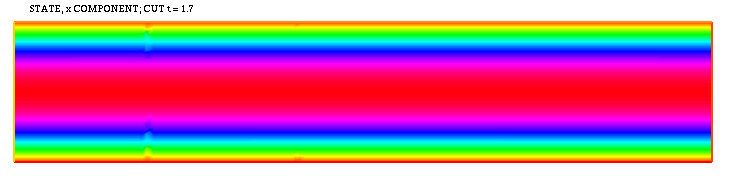}
\includegraphics[width=\textwidth]{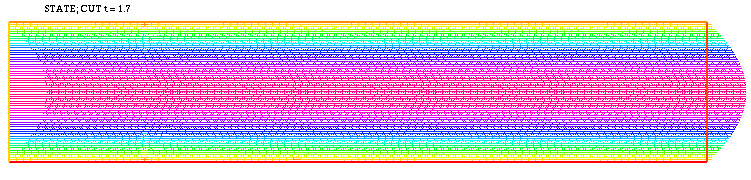}
\end{minipage}
\caption{Poiseuille test -- The state at $T=1.1$ ({\bf Left}) and 
the state at $T=1.7$ (({\bf Right}).}
\label{fig_poiseu_2}
\end{center}
\end{figure} 


\begin{figure}[http]
\begin{center}
\begin{minipage}{0.7\textwidth}
\includegraphics[width=\textwidth]{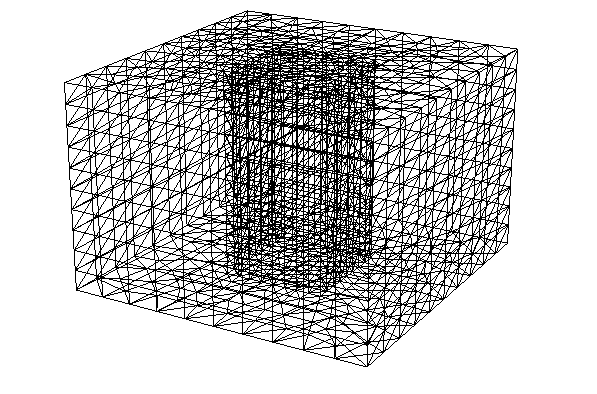}
\end{minipage}
\caption{Taylor-Green test -- The domain and the mesh. Number of vertices: 3146. Number of elements (tetrahedra): 15900.
Total number of variables: 22022.}
\label{mesh_taylor}
\end{center}
\end{figure}


\begin{figure}[http]
\begin{center}
\begin{minipage}{0.49\textwidth}
\includegraphics[width=\textwidth]{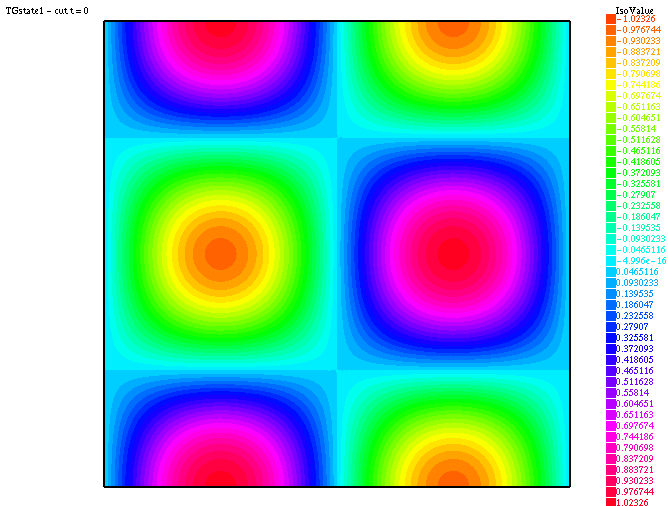}
\includegraphics[width=\textwidth]{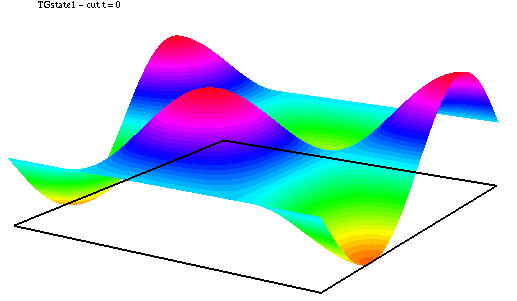}
\end{minipage}
\begin{minipage}{0.49\textwidth}
\includegraphics[width=\textwidth]{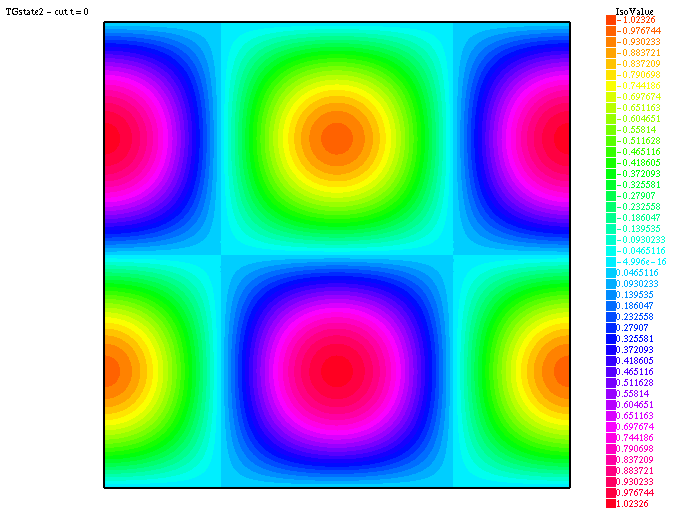}
\includegraphics[width=\textwidth]{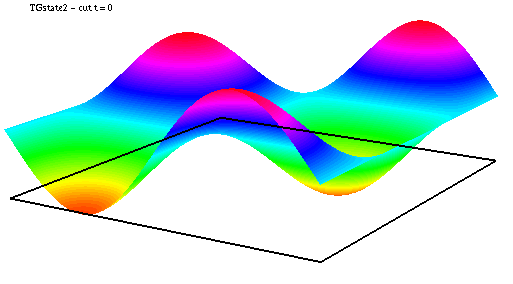}
\end{minipage}
\caption{Taylor-Green test -- First component of the initial datum ({\bf Left}) and 
second component of the initial datum ({\bf Right}).}
\label{fig_taylor_1}
\end{center}
\end{figure} 


\begin{figure}[http]
\begin{center}
\begin{minipage}{0.45\textwidth}
\includegraphics[width=\textwidth]{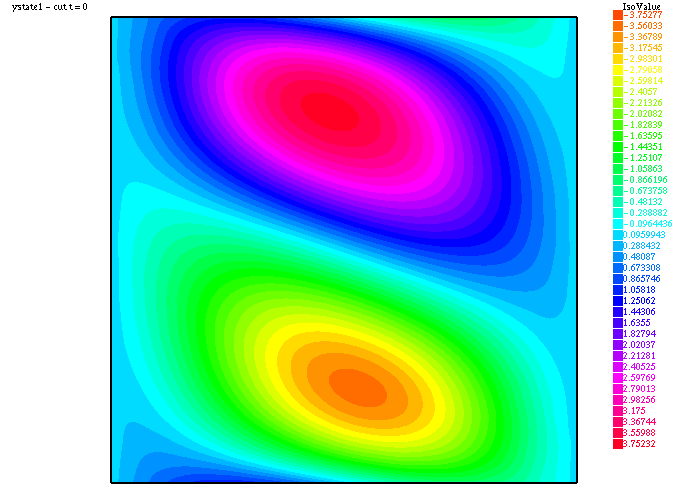}
\includegraphics[width=\textwidth]{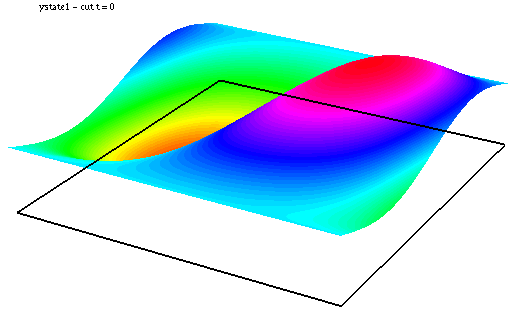}
\end{minipage}
\begin{minipage}{0.45\textwidth}
\includegraphics[width=\textwidth]{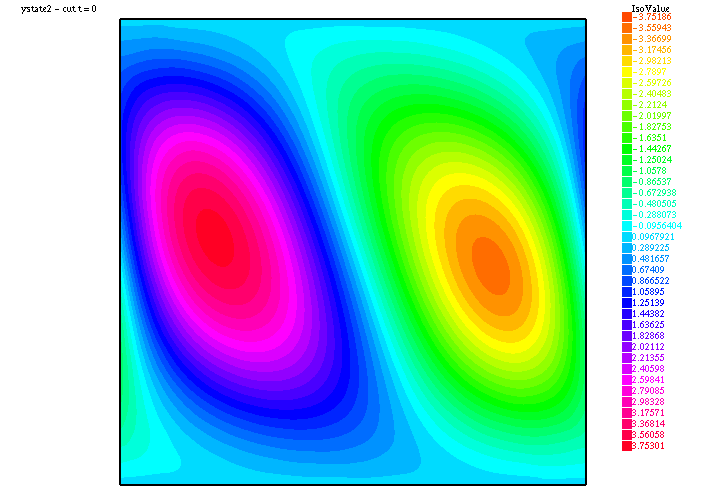}
\includegraphics[width=\textwidth]{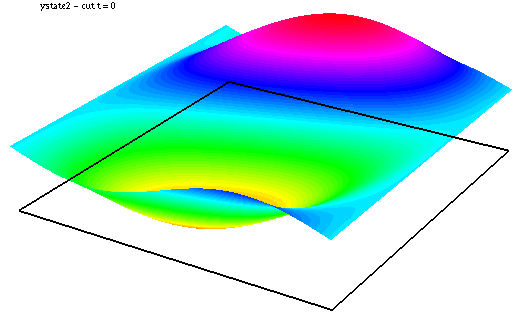}
\end{minipage}
\caption{Taylor-Green test -- The initial data: first component ({\bf Left}) and 
second component ({\bf Right}).}
\label{fig_taylor_2}
\end{center}
\end{figure} 


\begin{figure}[http]
\begin{center}
\begin{minipage}{0.45\textwidth}
\includegraphics[width=\textwidth]{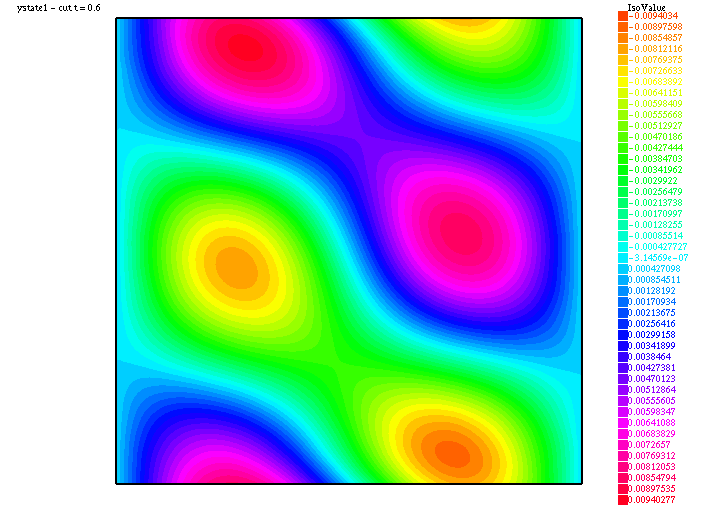}
\includegraphics[width=\textwidth]{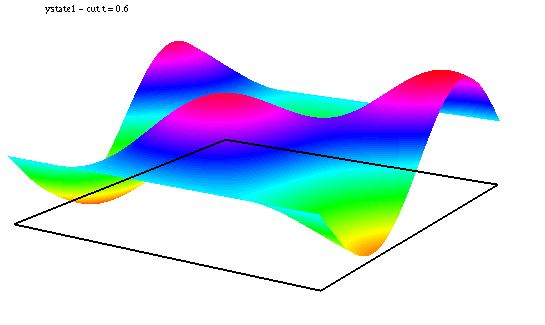}
\end{minipage}
\begin{minipage}{0.45\textwidth}
\includegraphics[width=\textwidth]{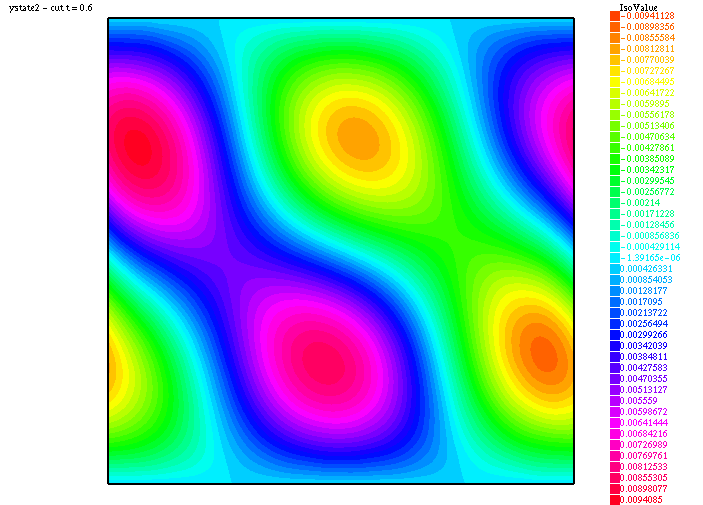}
\includegraphics[width=\textwidth]{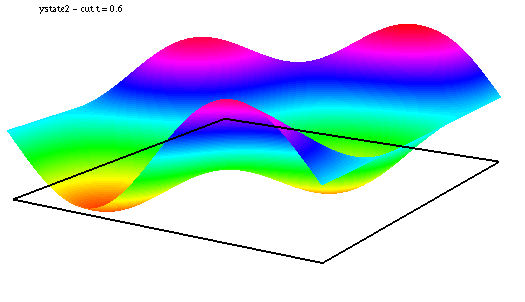}
\end{minipage}
\caption{Taylor-Green test -- The first component of the state  ({\bf Left}) and 
the second component of the  state at $T=0.6$ (({\bf Right}).}
\label{fig_taylor_3}
\end{center}
\end{figure} 


\begin{figure}[http]
\begin{center}
\begin{minipage}{0.45\textwidth}
\includegraphics[width=\textwidth]{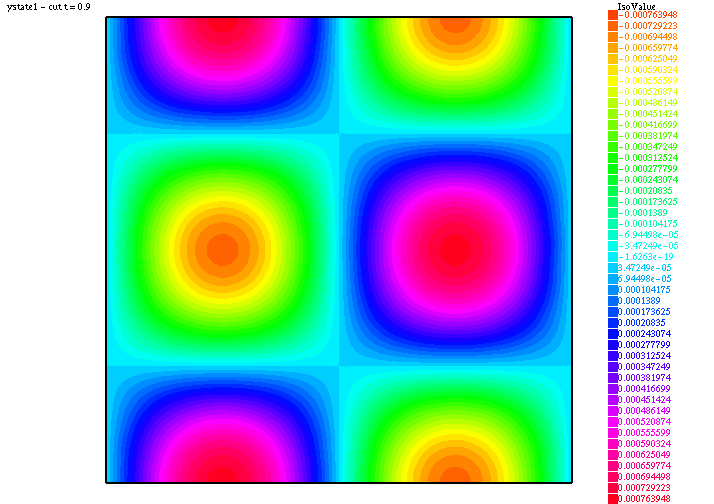}
\includegraphics[width=\textwidth]{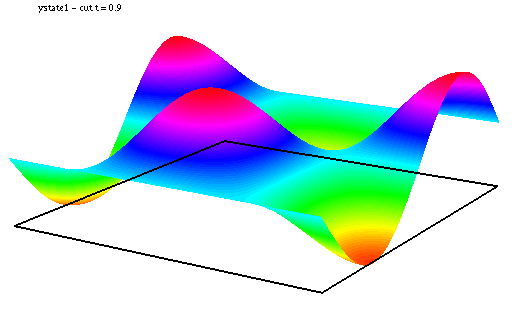}
\end{minipage}
\begin{minipage}{0.45\textwidth}
\includegraphics[width=\textwidth]{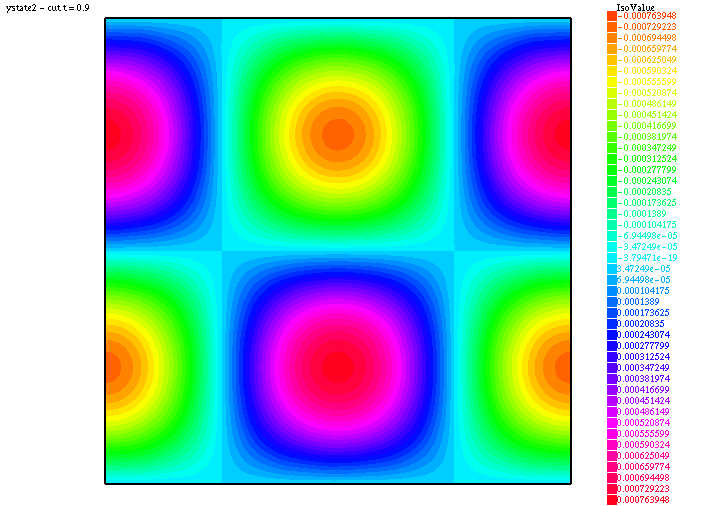}
\includegraphics[width=\textwidth]{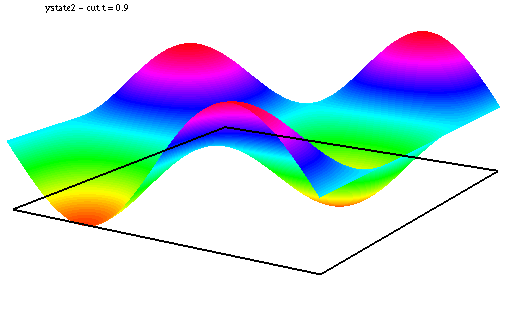}
\end{minipage}
\caption{Taylor-Green test -- The first component of the state ({\bf Left}) and 
the second component of the  state at $T=0.9$ (({\bf Right}).}
\label{fig_taylor_4}
\end{center}
\end{figure} 


\begin{figure}[http]
\vskip-3cm
\begin{center}
\begin{minipage}{0.5\textwidth}
\includegraphics[width=\textwidth]{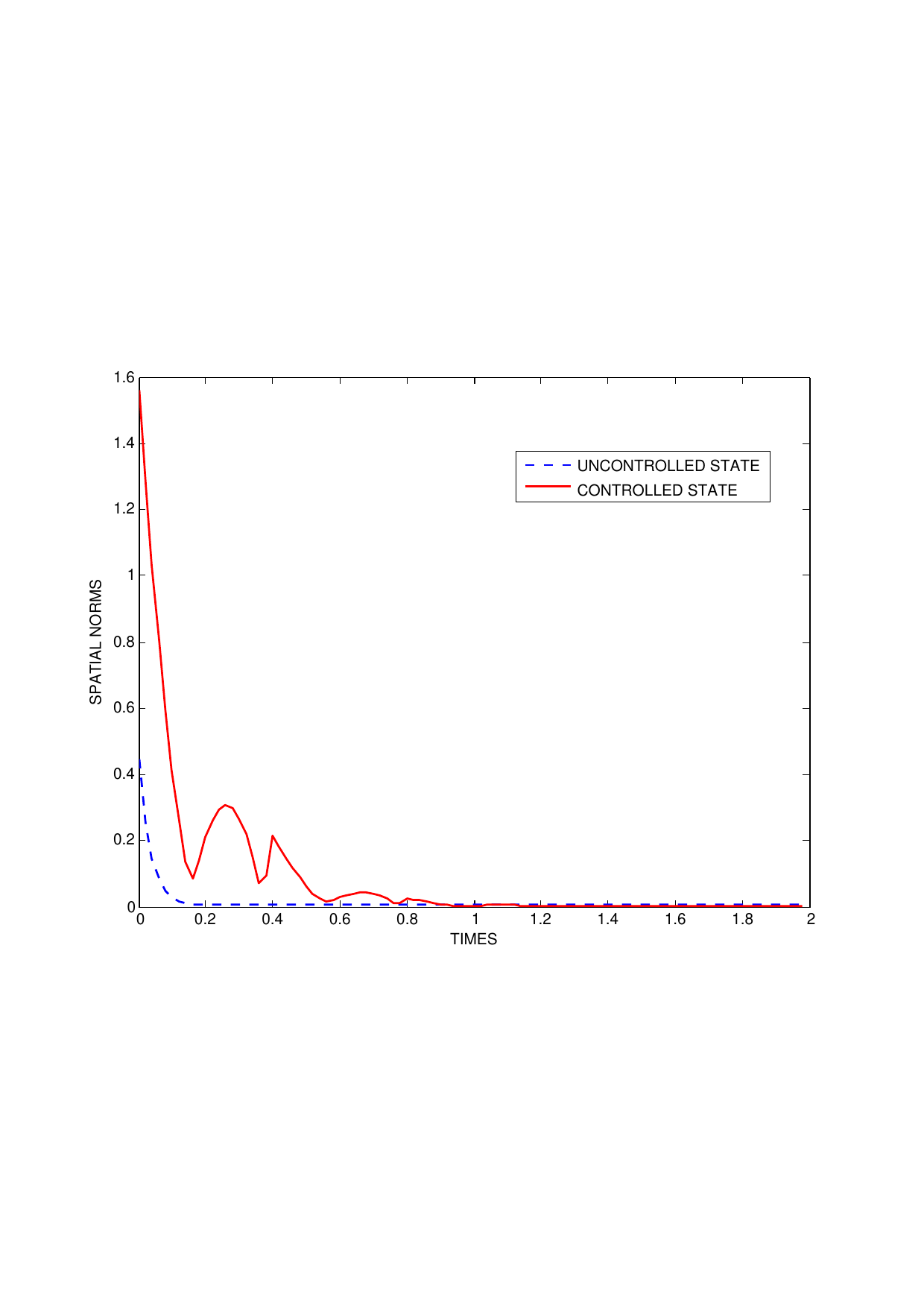}
\end{minipage}
\hskip-.5cm
\begin{minipage}{0.5\textwidth}
\includegraphics[width=\textwidth]{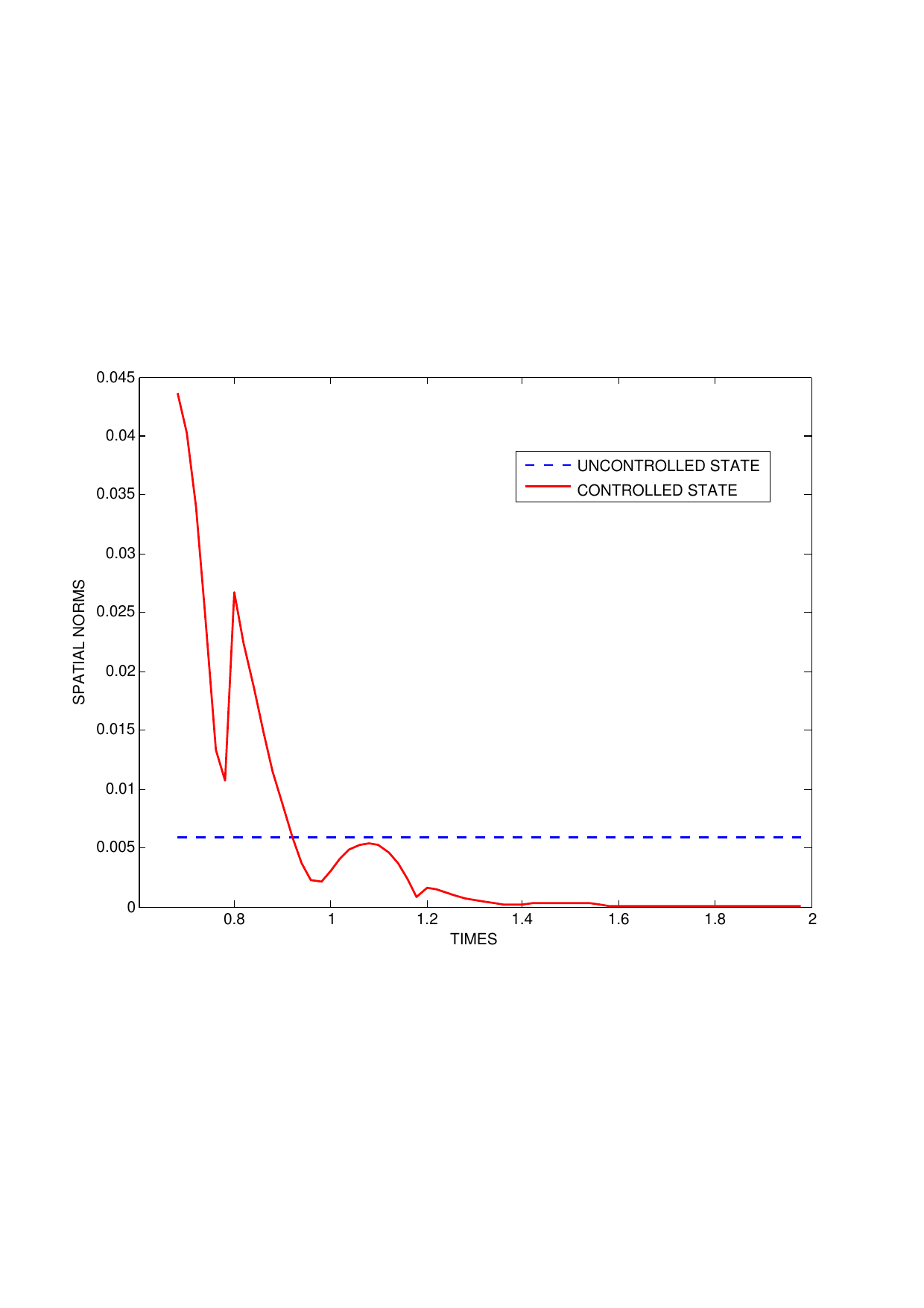}
\end{minipage}
\vskip-3cm
\caption{Poiseuille test -- The $L^2$-norms of the deviations $\yvec - \overline{\yvec}_P$ and~$\zvec - \overline{\yvec}_P$ corresponding to the computed controlled and uncontrolled states ({\bf Left}) and a detail (({\bf Right}).
   Here, $T=2$.
   For instance, at $t=1.98$ we have $\|\yvec(\cdot\,,t) - \overline{\yvec}_P\|_{\Lvec^2(\Om)} = 1.45367 \cdot 10^{-7}$ and~$\|\zvec(\cdot\,,t) - \overline{\yvec}_P\|_{\Lvec^2(\Om)} = 0.00597072$.}\label{snormP}
\end{center}
\end{figure} 


\begin{figure}[http]
\vskip-3cm
\begin{center}
\begin{minipage}{0.5\textwidth}
\includegraphics[width=\textwidth]{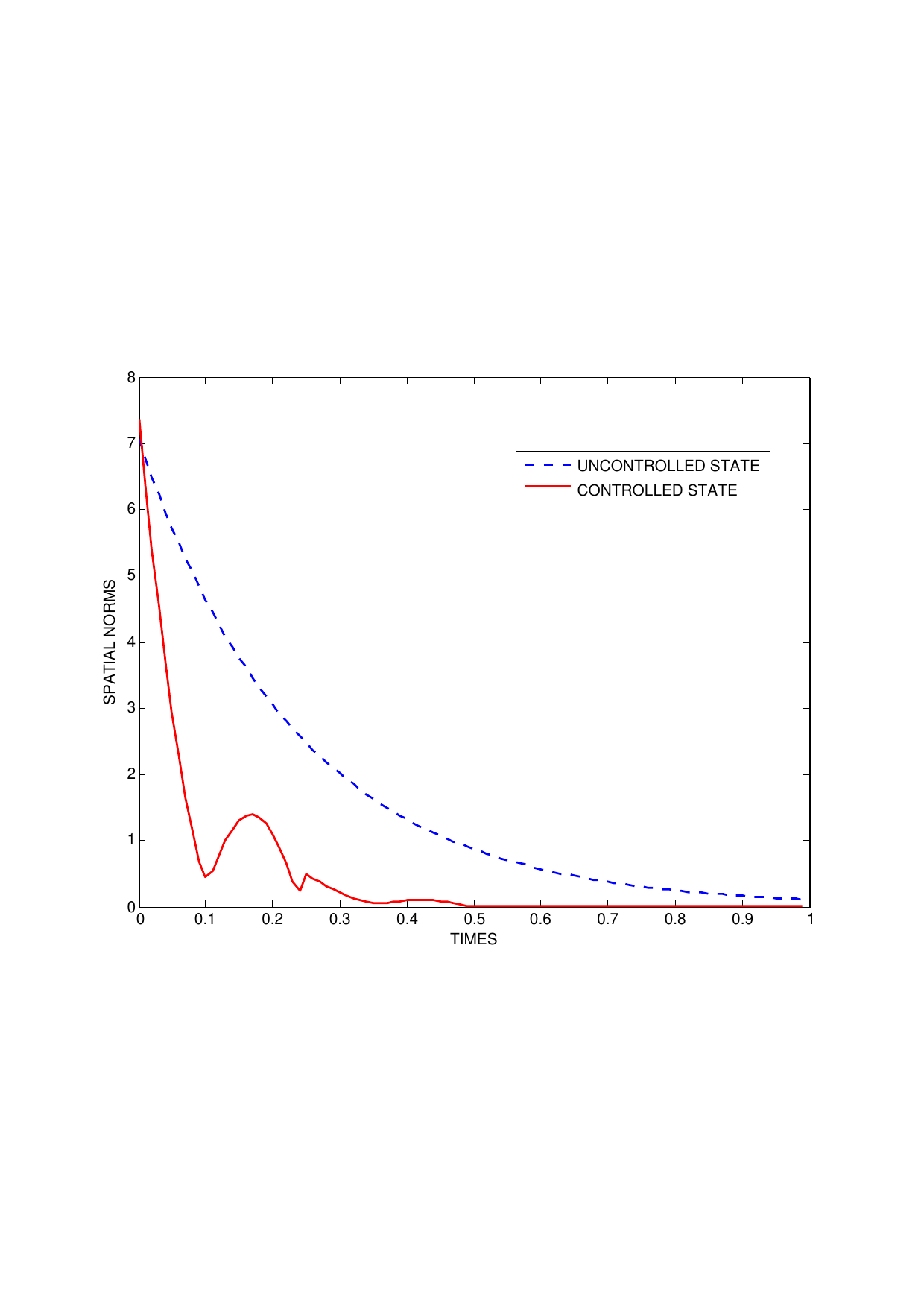}
\end{minipage}
\hskip-.5cm
\begin{minipage}{0.5\textwidth}
\includegraphics[width=\textwidth]{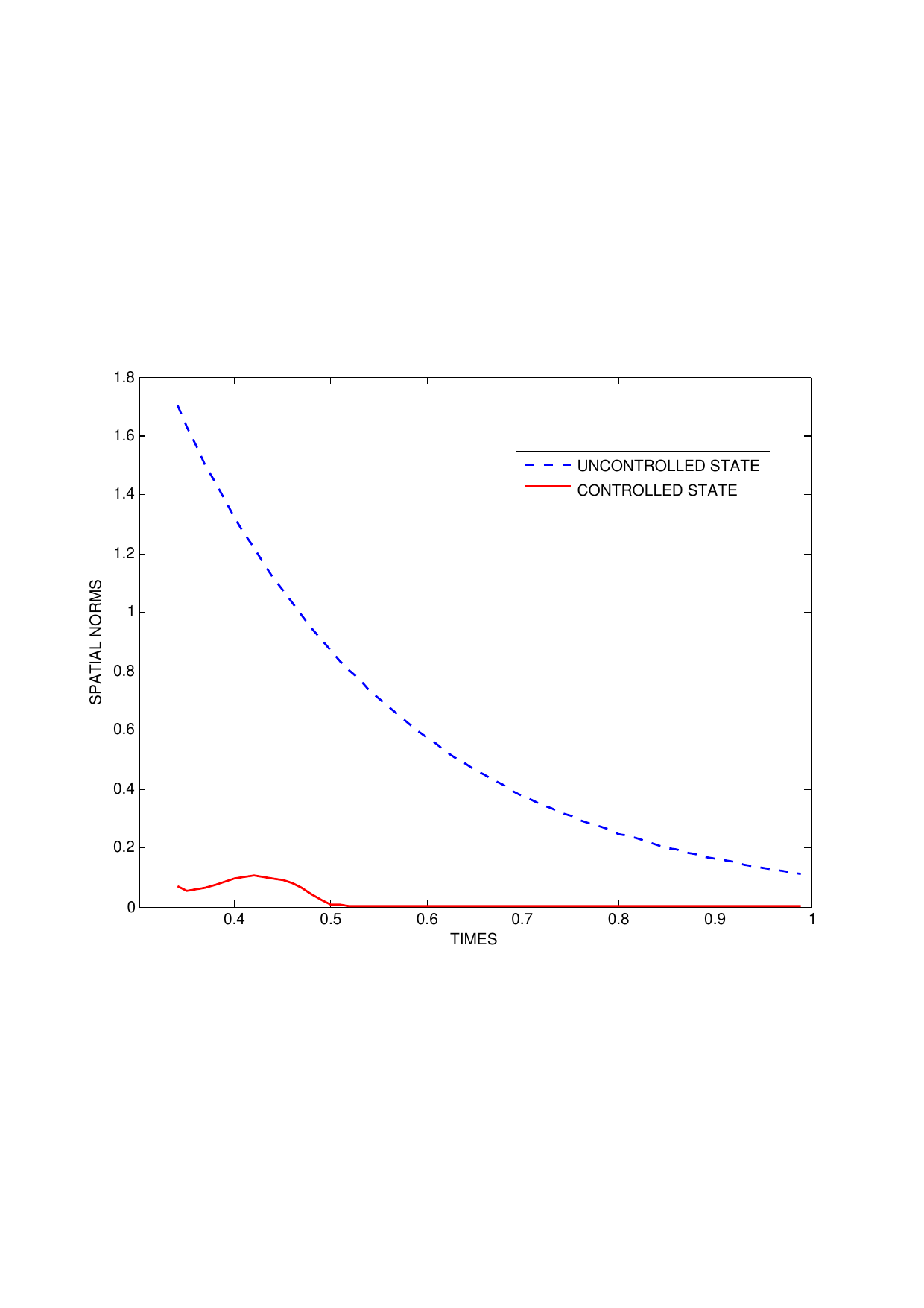}
\end{minipage}
\vskip-3cm
\caption{Taylor-Green test -- The $L^2$-norms of the deviations $\yvec - \overline{\yvec}_{TG}$ and~$\zvec - \overline{\yvec}_{TG}$ corresponding to the computed controlled and uncontrolled states ({\bf Left}) and a detail (({\bf Right}).
   Here, $T=1$.
   For instance, at $t=0.99$ we have $\|\yvec(\cdot\,,t) - \overline{\yvec}_{TG}(\cdot\,,t)\|_{\Lvec^2(\Om)} = 3.74645 \cdot 10^{-10}$ and~$\|\zvec(\cdot\,,t) - \overline{\yvec}_{TG}(\cdot\,,t)\|_{\Lvec^2(\Om)} = 0.112909$.}
\label{snormTG}
\end{center}
\end{figure} 



\section{Additional comments and conclusions}\label{Sec-final}

   In this paper, we have seen that it is possible to solve numerically null controllability problems for the two-dimensional heat, Stokes and Navier-Stokes equations  with Dirichlet boundary conditions.
   We have used some ideas that come from the so called Fursikov-Imanuvilov formulation and lead to the solution of high order partial differential problems in the space and time variables.
   The similar and simpler one-dimensional case was studied in~\cite{EFC-AM-sema}.
   
   There are two different ways to define numerical approximations of the resulting systems:
   
\begin{itemize}

\item By working with spatially $C^1$ finite element spaces or 

\item By introducing multipliers and working on (usual) $C^0$ finite element spaces.

\end{itemize}

   In this paper, we have chosen the second approach.
   In a forthcoming paper, we will be concerned with the first one.
   
   Unfortunately, in our case, the numerical approximation is not completely justified from a rigorous mathematical viewpoint.
   However, we have seen that the approximate problems can be solved in a relatively easy way and produce good numerical results.
   
   Of course, the same ideas and techniques can be applied in many other similar situations:
   semilinear heat equations with (for instance) globally Lipschitz-continuous nonlinearities, Boussinesq-like systems, non-cylindrical control domains, boundary control problems, etc.
   Some results have been presented in~\cite{NC-EFC-AM, EFC-AM-mcrf};
   see also~\cite{EFC-AM-dual}.	



\begin{thebibliography}{10}

\bibitem{boyercanum12}
{\sc F.~Boyer}, {\em On the penalised {HUM} approach and its applications to
  the numerical approximation of null-controls for parabolic problems}, in
  CANUM 2012, Super-Besse, ESAIM Proc., EDP Sci., Les Ulis, 2013.

\bibitem{boyer}
{\sc F.~Boyer, F.~Hubert, and J.~Le~Rousseau}, {\em Uniform controllability
  properties for space/time-discretized parabolic equations}, Numer. Math., 118
  (2011), pp.~601--661.

\bibitem{BrezziFortin}
{\sc F.~Brezzi and M.~Fortin}, {\em Mixed and hybrid finite element methods},
  vol.~15 of Springer Series in Computational Mathematics, Springer-Verlag, New
  York, 1991.
  
\bibitem{carthel}
{\sc C.~Carthel, R.~Glowinski and J.-L.~Lions}, {\em On exact and approximate 
  boundary controllability for the heat equation: a numerical approach}, 
  J.~Optimization, Theory and Applications 82(3), (1994) 429--484.


\bibitem{ciarletfem}
{\sc P.~G. Ciarlet}, {\em The finite element method for elliptic problems},
  vol.~40 of Classics in Applied Mathematics, Society for Industrial and
  Applied Mathematics (SIAM), Philadelphia, PA, 2002.
\newblock Reprint of the 1978 original [North-Holland, Amsterdam; MR0520174 (58
  \#25001)].

\bibitem{ciarlet_lions}
{\sc P.~G. Ciarlet and J.~L. Lions}, eds., {\em Handbook of numerical analysis.
  {V}ol. {IX}}, Handbook of Numerical Analysis, IX, North-Holland, Amsterdam,
  2003.
\newblock Numerical methods for fluids. Part 3.

\bibitem{NC-EFC-AM}
{\sc N.~C{\^{\i}}ndea, E.~Fern{\'a}ndez-Cara, and A.~M{\"u}nch}, {\em Numerical
  controllability of the wave equation through primal methods and {C}arleman
  estimates}, ESAIM Control Optim. Calc. Var., 19 (2013), pp.~1076--1108.

\bibitem{EV09}
{\sc S.~Ervedoza and J. ~Valein}, {\em On the observability of abstract time-discrete
  linear parabolic equations}, Rev. Mat. Complut.,~23 (2010), no.~1, 163--190.

\bibitem{FC-G-P}
{\sc E.~Fern{\'a}ndez-Cara, S.~Guerrero, O.~Y. Imanuvilov, and J.-P. Puel},
  {\em Local exact controllability of the {N}avier-{S}tokes system}, J. Math.
  Pures Appl. (9), 83 (2004), pp.~1501--1542.

\bibitem{EFC-AM-mcrf}
{\sc E.~Fern{\'a}ndez-Cara and A.~M{\"u}nch}, {\em Numerical null
  controllability of semi-linear 1-{D} heat equations: fixed point, least
  squares and {N}ewton methods}, Math. Control Relat. Fields, 2 (2012),
  pp.~217--246.

\bibitem{EFC-AM-sema}
{\sc E.~Fern{\'a}ndez-Cara and A.~M\"unch}, {\em Strong convergence
  approximations of null controls for the 1{D} heat equation}, S$\vec{\rm e}$MA
  J., 61 (2013), pp.~49--78.

\bibitem{EFC-AM-dual}
{\sc E.~Fern{\'a}ndez-Cara and A.~M{\"u}nch}, {\em Numerical exact
  controllability of the 1{D} heat equation: duality and {C}arleman weights},
  J. Optim. Theory Appl., 163 (2014), pp.~253--285.

\bibitem{FursikovImanuvilov}
{\sc A.~V. Fursikov and O.~Y. Imanuvilov}, {\em Controllability of evolution
  equations}, vol.~34 of Lecture Notes Series, Seoul National University
  Research Institute of Mathematics Global Analysis Research Center, Seoul,
  1996.

\bibitem{GlowinskiBook}
{\sc R.~Glowinski}, {\em Finite element methods for incompressible viscous
  flow}, in Handbook of numerical analysis, {V}ol.\ {IX}, Handb. Numer. Anal.,
  IX, North-Holland, Amsterdam, 2003, pp.~3--1176.

\bibitem{GBGP}
{\sc M.~Gonz{\'a}lez-Burgos, S.~Guerrero, and J.-P. Puel}, {\em Local exact
  controllability to the trajectories of the {B}oussinesq system via a
  fictitious control on the divergence equation}, Commun. Pure Appl. Anal., 8
  (2009), pp.~311--333.

\bibitem{Gunz-book}
{\sc M.~D. Gunzburger}, {\em Perspectives in flow control and optimization},
  vol.~5 of Advances in Design and Control, Society for Industrial and Applied
  Mathematics (SIAM), Philadelphia, PA, 2003.

\bibitem{hecht}
{\sc F.~Hecht}, {\em New development in freefem++}, J. Numer. Math., 20 (2012),
  pp.~251--265.

\bibitem{IMANU}
{\sc O.~Y. Imanuvilov}, {\em Remarks on exact controllability for the
  {N}avier-{S}tokes equations}, ESAIM Control Optim. Calc. Var., 6 (2001),
  pp.~39--72 (electronic).

\bibitem{Ima-JPP-Yamamoto}
{\sc O.~Y. Imanuvilov, J.-P. Puel, and M.~Yamamoto}, {\em Carleman estimates
  for second order non homogeneous parabolic equation}, to appear.

\bibitem{Labbe}
{\sc S. Labb\'e and E. Tr\'elat}, {\em Uniform controllability of semi-discrete 
  approximations of parabolic control systems}, Systems and Control Letters~55
  (2006), 597--609.

\bibitem{AM-DASmixedheat}
{\sc A.~M{\"u}nch and D.~A. Souza}, {\em A mixed formulation for the direct
  approximation of $l^2$-weighted controls for the linear heat equation}, To
  appear in Adv. Comput. Math. (http://hal.archives-ouvertes.fr/hal-00998765).

\bibitem{queck}
{\sc W.~Queck}, {\em The convergence factor of preconditioned algorithms of the
  {A}rrow-{H}urwicz type}, SIAM J. Numer. Anal., 26 (1989), pp.~1016--1030.

\bibitem{RAYMOND}
{\sc J.-P. Raymond}, {\em Stokes and {N}avier-{S}tokes equations with a
  nonhomogeneous divergence condition}, Discrete Contin. Dyn. Syst. Ser. B, 14
  (2010), pp.~1537--1564.

\bibitem{rob-tho}
{\sc J.~E. Roberts and J.-M. Thomas}, {\em Mixed and hybrid methods}, in
  Handbook of numerical analysis, {V}ol.\ {II}, Handb. Numer. Anal., II,
  North-Holland, Amsterdam, 1991, pp.~523--639.

\end{thebibliography}

\end{document}